\newcommand{\dom}[1]{\text{\rm $#1$-dom}}
\newcommand{\ud}[1]{\underline{#1}}
\newcommand{\BB}{\mathbb{B}}
\newcommand{\BT}{\mathbb{T}}
\newtheorem*{cor}{Corollary}
\newtheorem*{lem}{Lemma}
\newtheorem*{prop}{Proposition}
\theoremstyle{definition}
\newtheorem*{defn}{Definition}
\theoremstyle{definition}
\newtheorem{thm}{Theorem}
\newtheorem*{thm*}{Theorem}
\newtheorem*{conj}{Conjecture}
\newenvironment{pf}{\proof}{\endproof}
\newcounter{cnt}
\newenvironment{enumerit}{\begin{list}{{\hfill\rm(\roman{cnt})\hfill}}{%
\settowidth{\labelwidth}{{\rm(iv)}}\leftmargin=\labelwidth%
\advance\leftmargin by \labelsep\rightmargin=0pt\usecounter{cnt}}}{\end{list}} \makeatletter
\def\mydggeometry{\makeatletter\dg@YGRID=1\dg@XGRID=20\unitlength=0.003pt\makeatother}
\makeatother \theoremstyle{remark}
\numberwithin{equation}{section}
\let\bwdg\bigwedge
\def\bigwedge{{\textstyle\bwdg}}
\newcommand{\ti}[1]{\widetilde{#1}}
\begin{document}

\newcommand{\thmref}[1]{Theorem~\ref{#1}}
\newcommand{\secref}[1]{Section~\ref{#1}}
\newcommand{\lemref}[1]{Lemma~\ref{#1}}
\newcommand{\propref}[1]{Proposition~\ref{#1}}
\newcommand{\corref}[1]{Corollary~\ref{#1}}
\newcommand{\remref}[1]{Remark~\ref{#1}}
\newcommand{\defref}[1]{Definition~\ref{#1}}
\newcommand{\er}[1]{(\ref{#1})}
\newcommand{\id}{\operatorname{id}}
\newcommand{\ord}{\operatorname{\emph{ord}}}
\newcommand{\sgn}{\operatorname{sgn}}
\newcommand{\wt}{\operatorname{wt}}
\newcommand{\tensor}{\otimes}
\newcommand{\from}{\leftarrow}
\newcommand{\nc}{\newcommand}
\newcommand{\rnc}{\renewcommand}
\newcommand{\dist}{\operatorname{dist}}
\newcommand{\qbinom}[2]{\genfrac[]{0pt}0{#1}{#2}}
\nc{\cal}{\mathcal} \nc{\goth}{\mathfrak} \rnc{\bold}{\mathbf}
\renewcommand{\frak}{\mathfrak}
\newcommand{\supp}{\operatorname{supp}}
\newcommand{\Irr}{\operatorname{Irr}}
\newcommand{\psym}{\mathcal{P}^+_{K,n}}
\newcommand{\psyml}{\mathcal{P}^+_{K,\lambda}}
\newcommand{\psymt}{\mathcal{P}^+_{2,\lambda}}
\renewcommand{\Bbb}{\mathbb}
\nc\bomega{{\mbox{\boldmath $\omega$}}} \nc\bpsi{{\mbox{\boldmath $\Psi$}}}
 \nc\balpha{{\mbox{\boldmath $\alpha$}}}
 \nc\bpi{{\mbox{\boldmath $\pi$}}}
\nc\bmu{{\mbox{\boldmath $\mu$}}} \nc\bcN{{\mbox{\boldmath $\cal{N}$}}} \nc\bcm{{\mbox{\boldmath $\cal{M}$}}} \nc\blambda{{\mbox{\boldmath
$\lambda$}}}\nc\bnu{{\mbox{\boldmath $\nu$}}}

\newcommand{\Tmn}{\bold{T}_{\lambda^1, \lambda^2}^{\nu}}

\newcommand{\lie}[1]{\mathfrak{#1}}
\newcommand{\ol}[1]{\overline{#1}}
\makeatletter
\def\section{\def\@secnumfont{\mdseries}\@startsection{section}{1}%
  \z@{.7\linespacing\@plus\linespacing}{.5\linespacing}%
  {\normalfont\scshape\centering}}
\def\subsection{\def\@secnumfont{\bfseries}\@startsection{subsection}{2}%
  {\parindent}{.5\linespacing\@plus.7\linespacing}{-.5em}%
  {\normalfont\bfseries}}
\makeatother
\def\subl#1{\subsection{}\label{#1}}
 \nc{\Hom}{\operatorname{Hom}}
  \nc{\mode}{\operatorname{mod}}
\nc{\End}{\operatorname{End}} \nc{\wh}[1]{\widehat{#1}} \nc{\Ext}{\operatorname{Ext}} \nc{\ch}{\text{ch}} \nc{\ev}{\operatorname{ev}}
\nc{\Ob}{\operatorname{Ob}} \nc{\soc}{\operatorname{soc}} \nc{\rad}{\operatorname{rad}} \nc{\head}{\operatorname{head}}
\def\Im{\operatorname{Im}}
\def\gr{\operatorname{gr}}
\def\mult{\operatorname{mult}}
\def\Max{\operatorname{Max}}
\def\ann{\operatorname{Ann}}
\def\sym{\operatorname{sym}}
\def\Res{\operatorname{\br^\lambda_A}}
\def\und{\underline}
\def\Lietg{$A_k(\lie{g})(\bsigma,r)$}

 \nc{\Cal}{\cal} \nc{\Xp}[1]{X^+(#1)} \nc{\Xm}[1]{X^-(#1)}
\nc{\on}{\operatorname} \nc{\Z}{{\bold Z}} \nc{\J}{{\cal J}} \nc{\C}{{\bold C}} \nc{\Q}{{\bold Q}}
\renewcommand{\P}{{\cal P}}
\nc{\N}{{\Bbb N}} \nc\boa{\bold a} \nc\bob{\bold b} \nc\boc{\bold c} \nc\bod{\bold d} \nc\boe{\bold e} \nc\bof{\bold f} \nc\bog{\bold g}
\nc\boh{\bold h} \nc\boi{\bold i} \nc\boj{\bold j} \nc\bok{\bold k} \nc\bol{\bold l} \nc\bom{\bold m} \nc\bon{\bold n} \nc\boo{\bold o}
\nc\bop{\bold p} \nc\boq{\bold q} \nc\bor{\bold r} \nc\bos{\bold s} \nc\boT{\bold t} \nc\boF{\bold F} \nc\bou{\bold u} \nc\bov{\bold v}
\nc\bow{\bold w} \nc\boz{\bold z} \nc\boy{\bold y} \nc\ba{\bold A} \nc\bb{\bold B} \nc\bc{\bold C} \nc\bd{\bold D} \nc\be{\bold E} \nc\bg{\bold
G} \nc\bh{\bold H} \nc\bi{\bold I} \nc\bj{\bold J} \nc\bk{\bold K} \nc\bl{\bold L} \nc\bm{\bold M} \nc\bn{\bold N} \nc\bo{\bold O} \nc\bp{\bold
P} \nc\bq{\bold Q} \nc\br{\bold R} \nc\bs{\bold S} \nc\bt{\bold T} \nc\bu{\bold U} \nc\bv{\bold V} \nc\bw{\bold W} \nc\bz{\bold Z} \nc\bx{\bold
x} \nc\KR{\bold{KR}} \nc\rk{\bold{rk}} \nc\het{\text{ht }}

\nc\toa{\tilde a} \nc\tob{\tilde b} \nc\toc{\tilde c} \nc\tod{\tilde d} \nc\toe{\tilde e} \nc\tof{\tilde f} \nc\tog{\tilde g} \nc\toh{\tilde h}
\nc\toi{\tilde i} \nc\toj{\tilde j} \nc\tok{\tilde k} \nc\tol{\tilde l} \nc\tom{\tilde m} \nc\ton{\tilde n} \nc\too{\tilde o} \nc\toq{\tilde q}
\nc\tor{\tilde r} \nc\tos{\tilde s} \nc\toT{\tilde t} \nc\tou{\tilde u} \nc\tov{\tilde v} \nc\tow{\tilde w} \nc\toz{\tilde z} \nc\woi{w_{\omega_i}}
\nc\chara{\operatorname{Char}}
\author[Chari, Fourier and Sagaki]{Vyjayanthi Chari, Ghislain Fourier and Daisuke Sagaki}
\address{ Department of Mathematics, University of California, Riverside, CA 92521}
\email{vyjayanthi.chari@ucr.edu}
\address{ Mathematisches Institut, Universit\"at zu K\"oln, Germany}
\email{gfourier@math.uni-koeln.de}
\address{ Department of Mathematics, University of Tsukuba, Japan}
\email{sagaki@math.tsukuba.ac.jp}

\thanks{\noindent V.C. was partially supported by DMS-0901253.\\
\noindent  G.F was partially supported by the DFG priority program 1388 ''Representation Theory'' \\
\noindent D.S. was partially supported by the Grant-in-Aid for Young Scientists (B) No.23740003
}

\date{\today}

\title{Posets, Tensor Products and Schur positivity }
\begin{abstract}

Let $\lie g$ be  a complex finite-dimensional simple Lie algebra.
Given a positive integer $k$ and a dominant weight $\lambda$,
we define a preorder  $\preceq$ on the set $P^{+}(\lambda,\,k)$ of
$k$-tuples of dominant weights which add up to $\lambda$.
Let $\sim$ be the  equivalence relation defined by the preorder and
 $P^{+}(\lambda,\,k)/\!\sim$ be the corresponding poset of equivalence classes.
We show that if $\lambda$ is a multiple of
a fundamental weight (and $k$ is general) or
if $k=2$ (and $\lambda$ is general), then
$P^{+}(\lambda,\,k)/\!\sim$ coincides with the set of
$S_{k}$-orbits in $P^{+}(\lambda,\,k)$,
where $S_{k}$ acts on $P^{+}(\lambda,\,k)$ as the permutations of components.
If  $\lie g$ is of type $A_n$ and $k=2$,
we show that the $S_{2}$-orbit of
the row shuffle defined by Fomin et al in \cite{FFLP2005} is the unique maximal element in the poset.

Given an element of $P^{+}(\lambda,\,k)$,
consider the tensor product of the corresponding
simple finite-dimensional $\lie g$-modules.
We show that (for general $\lie g$, $\lambda$, and $k$)
the dimension of this tensor product increases along $\preceq$.
We also show that in the case when
$\lambda$ is a multiple of a fundamental minuscule weight
($\lie g$ and $k$ are general) or
if $\lie g$ is of type $A_{2}$ and $k=2$ ($\lambda$ is general),
there exists an inclusion of tensor products along with the partial
order $\preceq$ on $P^{+}(\lambda,\,k)/\!\sim$. In particular,  if $\lie g$
is of type $A_n$,  this means that the difference of
the characters is Schur positive.
\end{abstract}

\maketitle


\section*{Introduction}
This paper is partially motivated by the following simple observation. The isomorphism classes of simple finite-dimensional representations of the complex simple Lie algebra $\lie{sl}_2$ are indexed by $\bz_+$, the set non-negative integers.  Given $r\in\bz_+$, let $V(r)$ be a representative of the corresponding isomorphism class.  The well-known Clebsch-Gordan formula gives us the following decomposition: for $r,s\in\bz_+$,
$$V(r) \otimes V(s) \cong V(r+s) \oplus V(r+s-2) \oplus \cdots \oplus V(|r-s|).$$
If $r_1,s_1\in\bz_+$ are such that $r_1+s_1=r+s$, it is then immediate that
one has an injection of $\lie{sl}_2$-modules,
\begin{equation} \label{surjection}
V(r) \otimes V(s) \hookrightarrow V(r_1) \otimes V(s_1) \iff |r-s| \geq |r_1 - s_1|.
\end{equation}
In particular, the dimension increases if $\min\{r, s\} \leq \min\{r_1,s_1\}$. Moreover the pairs $(r,r)$ and $(r,r+1)$ are maximal in the sense that: the corresponding tensor products maps onto to any tensor product corresponding to $(r_1,s_1)$ if $r_1+s_1=2r$ (resp. $r_1+s_1=2r+1$).  Notice that these maximal pairs are actually the simplest examples of the row shuffle of partitions given in \cite{FFLP2005}. Part of our interest in this problem comes from the fact that the tensor product $V(r)\otimes V(s)$ admits the structure of an indecomposable  module for the infinite-dimensional Lie algebra $\lie{sl}_2\otimes \bc[t]$ where $\bc[t]$ is the polynomial ring in the indeterminate $t$. In this case, the map in \eqref{surjection} is actually a map of $\lie{sl}_2\otimes \bc[t]$-modules and the module corresponding to the maximal pair is a truncation of a local Weyl module. We shall return to these ideas elsewhere. We mention this  only to indicate our original motivation,   the  results of the current paper are entirely about simple Lie algebras.

Suppose now that $\lie g$ is a finite-dimensional simple Lie algebra, $P^+$ the set of dominant integral weights. We let $P^+(\lambda,2)$ be the set of ``compositions'' of $\lambda$ with at most two parts: i.e. pairs of dominant integral weights which add up to $\lambda$. If $(\lambda_1,\lambda_2)$ and $(\mu_1,\mu_2)$ are two such compositions, we define a partial order $\preceq$ by requiring that $\min\{\lambda_1(h_\alpha),\lambda_2(h_\alpha)\}\le \min\{\mu_1(h_\alpha),\mu_2(h_\alpha)\}$ holds for all positive roots $\alpha$. This order extends in a natural way to $P^+(\lambda, k)$: the set compositions of $\lambda$ with $k$ parts for all $k\ge 1$, one just requires the inequality to hold for all partial sums.

For $\mu\in P^+$, let $V(\mu)$ be the corresponding finite-dimensional $\lie g$-module. If $\blambda$ and $\bmu$ are two compositions of $\lambda$ with $k$ parts and $V(\blambda)$, $V(\bmu)$ are the corresponding tensor products of $\lie g$-modules, we prove that
$$\blambda\preceq\bmu\implies \dim V(\blambda)\le\dim V(\bmu).$$
If $\lambda$ is a multiple of a minuscule weight, then we prove that there exists a (non-canonical) inclusion $V(\blambda)\hookrightarrow V(\bmu)$. In the case when $\lie g$ is of type $\lie{sl}_3$ and $k=2$ we prove that the inclusion holds for all $\lambda\in P^+$. We conjecture that this latter result holds for all simple Lie algebras.
Our  conjecture may be viewed as a generalization of the row-shuffle conjecture which was  made in  \cite{FFLP2005} for representations of $\lie{sl}_{n+1}$. The row shuffle conjecture  and proved in  \cite{LPP2007} and shows that our conjecture is true in the case of $\lie{sl}_{n+1}$ for the pair $\blambda,\bmu$ where $\bmu$ is the maximal element in the poset.
A completely different approach  taken in \cite{DP2007} also gives some evidence for our conjecture in the case of $\lie{sl}_{n+1}$.

 \vskip 12pt
 
 \noindent The following is  an immediate combinatorial consequence of Theorem~\ref{mainthm} (i) and is perhaps of independent interest.
Given two partitions
$(0 \leq a_1 \leq a_2 \leq \ldots \leq a_k)$ and $ (0 \leq b_1 \leq b_2 \leq \ldots \leq b_k)$  of an integer $n$  satisfying:
 $$\sum_{j = 1}^i a_j \leq \sum_{j = 1}^i b_j \text{ for all } 1 \leq i \leq k .$$ Then $$ a_1\cdots a_k \le  b_1\cdots b_k$$ with equality
iff
$a_j= b_j \text{ for all  } 1 \leq j \leq k $.

\vskip 12pt

The article is organized as follows. Section~\ref{section1} has the basic definitions and notation.
In Section~\ref{section2} we introduce the partial order and state the main theorem.
In Section~\ref{section3} we prove that   the dimension of the tensor product increases along the partial order.
The critical idea in this proof is to show that in the case of $\lie{sl}_2$,
the partial order for $k>2$ is determined by the order at $k=2$.
Once this is done, the proof is a simple application of the Weyl dimension formula. 
In Section~\ref{section4} we use the results of Section~\ref{section2} and
the Littelmann path model to prove that for general $k$ and $\lambda$ a multiple of
a minuscule weight, we have an inclusion of tensor products along the partial order.
In Section~\ref{section5} we  study the partial order $P^+(\lambda,2)$ in detail.
We identify maximal elements of the poset and prove that the row shuffle is
the unique maximal element when $\lie g$ is of type $\lie{sl}_{n+1}$.
%
%
Finally, in Section~\ref{section6}, we use results of Kashiwara and
Nakashima on semistandard Young tableaux and crystal bases to prove that
in the case that $\lie{g}=\lie{sl}_3$ and $k=2$,
the Schur positivity holds along our order for all $\lambda\in P^{+}$.

\textbf{Acknowledgements:} The authors thank the Hausdorff  Institute for Mathematics for the invitation to participate in the trimester program ''On the Interaction of Representation Theory with Geometry and Combinatorics'' where this collaboration was initiated. The authors would also thank Anne Schilling for helpful discussions.


\section{Preliminaries}\label{section1}

 Throughout this paper we denote by   $\bc$  the field of complex numbers and $\bz$ (resp. $\bz_+$) the set of integers (resp.  nonnegative integers).

\subsection{}
Let  $\lie g$  denote a finite-dimensional complex simple Lie algebra of rank $n$ and $\lie h$   a fixed Cartan subalgebra of $\lie g$. Let   $I=\{1,\cdots ,n\}$   and fix a set  $\{\alpha_i: i\in I\}$ of simple roots of $\lie g$ with respect to $\lie h$ and a set $\{\omega_i: i\in I\}$   of fundamental weights.  Let $P$ (resp. $P^+$)  be the $\bz$ (resp. $\bz_+$) span of  $\{\omega_i: i\in I\}$.  Let $R$ and $R^+ $ be the set of roots and  positive  roots  respectively. For $\alpha\in R^+$, let $\lie{sl}_2(\alpha) = \langle x_{\alpha}^{\pm}, h_{\alpha} \rangle$ be the corresponding subalgebra of $\lie g$ isomorphic to $\lie{sl}_2$ and set $h_{-\alpha}=-h_\alpha$ for $\alpha\in R^+$, $h_i = h_{\alpha_i}$.  Let $W$ be the Weyl group of $\lie g$ and recall that  $W$ acts on $\lie h$ and $\lie h^*$ and that for all $\bow\in W$, $\lambda\in\lie h^*$ and $\alpha\in R^+$, we have  $$(\bow\lambda)(\bow h_\alpha)=\lambda(h_\alpha).$$  For $\alpha\in R^+$, let $\bos_\alpha\in W$ be the corresponding reflection and we set $\bos_i=\bos_{\alpha_i}$.  Let $\bow_0\in W$ be the longest element.

We say that $\lambda\in P^+$ is minuscule if $\lambda(h_\alpha) \in \{ 0,1 \}$
for all $\alpha \in R^+$. It can be easily seen that if $\lambda \in P^{+}$
is minuscule, then $\lambda$ is a fundamental weight.
The following is the list of minuscule weights; here, we follow the numbering
of vertices of the Dynkin diagram for $\lie{g}$ in \cite{Bour}.
\begin{equation*}
\begin{array}{ll}
A_{n} & \omega_{i},\, 1 \le i \le n \\[1mm]
B_{n} & \omega_{n} \\[1mm]
C_{n} & \omega_{1} \\[1mm]
D_{n} & \omega_{1},\,\omega_{n-1},\,\omega_{n} \\[1mm]
E_{6} & \omega_{1},\,\omega_{6} \\[1mm]
E_{7} & \omega_{7}
\end{array}
\end{equation*}

\subsection{}\label{locfin}

For any $\lie g$-module $M$ and $\mu\in\lie h^*$, set
\begin{equation*}
M_\mu=\{m \in M \ :\ hm=\mu(h)m,\quad h\in\lie h\}, \qquad
\wt(M)=\left\{\mu \in \lie h^\ast\ :\ M_\mu\ne 0\right\}.
\end{equation*}
We say  $M$ is a \textit{weight module} for $\lie g$ if
$$M=\bigoplus_{\mu\in\lie h^\ast} M_\mu.$$
Any finite-dimensional $\lie g$-module is a weight module.
It is well-known that the set of isomorphism classes of irreducible finite-dimensional
$\lie g$-modules is in bijective correspondence with $P^+$.
For $\lambda\in P^+$ we denote by $V(\lambda)$
a representative of the corresponding isomorphism class.

If $V(\lambda)^*$ is the dual representation of $V(\lambda)$, then \begin{equation}\label{dual} V(\lambda)^*\cong V(-\bow_0\lambda).\end{equation}
The dimension of $V(\lambda)$ is given by the Weyl dimension formula, namely
\begin{equation}\label{weyldim}\dim V(\lambda)=\prod_{\alpha\in R^+}\frac{(\lambda+\rho)(h_\alpha)}{\rho(h_\alpha)},\end{equation}
where $\rho=\sum_{i=1}^n\omega_i\in P^+$. Any finite-dimensional $\lie g$-module is isomorphic to a direct sum of irreducible modules and in particular, we have $$V(\lambda)\otimes V(\mu)\cong\bigoplus_{\nu\in P^+} V(\nu)^{\oplus c^\nu_{\lambda,\mu}},\ \ c^\nu_{\lambda,\mu}=\dim\Hom_{\lie g}(V(\nu), V(\lambda)\otimes V(\mu)).$$
We shall freely use  the fact that $$V(\lambda)\otimes V(\mu)\cong V(\mu)\otimes V(\lambda),$$ and that $$ \dim\Hom_{\lie g}(V(\nu), V(\lambda)\otimes V(\mu))= \dim\Hom_{\lie g}(V(\nu)^*, V(\lambda)^*\otimes V(\mu)^*).$$

\section{The poset \texorpdfstring{$P^+(\lambda,k)/\sim$}{} and the main result}
%
\label{section2}

\subsection{}
%
Given an integer  $k > 0$ and $ \lambda \in P^+$, set
$$P^+(\lambda,k) =
 \{\blambda= (\lambda_1,\,\dots,\,\lambda_k) \in (P^+)^{\times k} :
   \lambda_1 + \cdots + \lambda_k = \lambda \}.$$
Clearly, $P^+(\lambda,k)$ is a finite set and the symmetric group $S_k$ acts naturally on it.
The Weyl group $W$ acts diagonally on $P^+(\lambda,k)$ as follows:
$$\bow \blambda  = \bow(\lambda_1, \ldots, \lambda_k) = (\bow \lambda_1,\ldots, \bow \lambda_k)
\quad \text{for $\bow \in W$}.
$$
Observe that
\begin{equation} \label{wrelation}
\blambda\in P^+(\lambda,k) \iff
-\bow_0\blambda\in P^+(-\bow_0\lambda,k).
\end{equation}
For $\blambda\in P^+(\lambda,k)$, $\alpha\in R^+$, and $1 \le \ell\le k$,
set
$$r_{\alpha,\ell}(\blambda) :=
\min \{  (\lambda_{i_1} + \cdots + \lambda_{i_\ell})(h_\alpha):
  1 \le i_1 < i_2 < \cdots < i_{\ell}\le k \}.$$
Clearly,
$r_{\alpha,k}(\blambda)\ge r_{\alpha,k-1}(\blambda,\alpha)\ge\cdots\ge r_{\alpha,1}(\blambda)$.
Given $\blambda,\bmu\in P^+(\lambda,k)$, we say that $\blambda\preceq \bmu$ if
$$r_{\alpha,\ell}(\blambda)\le r_{\alpha,\ell}(\bmu)  \quad
 \text{for all $\alpha\in R^+$ and $1\le \ell\le k$}.$$
This defines a preorder on the $P^+(\lambda,k)$.
It induces a partial order on the set of equivalence classes
with respect to the equivalence relation $\sim$
on $P^+(\lambda,k)$, given  by
$$\blambda \sim \bmu \quad \iff \quad
r_{\alpha,\ell}(\blambda)=r_{\alpha,\ell}(\bmu) \quad
\text{for all $\alpha\in R^+$ and  $1\le \ell\le k$}.$$
The poset has a unique minimal element
which is just the $k$-tuple $(\lambda,0,\dots, 0)$.

{\em  For ease of notation, we shall not always distinguish
between elements of $P^+(\lambda,k)/\sim$ and
their representatives in $P^+(\lambda,k)$. }

Note that if $\blambda,\bmu\in P^+(\lambda,k)$, then
\begin{equation} \label{w0}
\blambda\preceq \bmu \quad \iff \quad
-\bow_0\blambda\preceq-\bow_0\bmu.
\end{equation}

We now state the main result of this paper.

\begin{thm}\label{mainthm}
Let $\lie g$ be a finite-dimensional simple Lie algebra,
$k\in\bz_+$ and $\lambda\in P^+$.
Assume that $\blambda =(\lambda_1,\,\dots,\,\lambda_k)$ and
$\bmu=(\mu_1,\,\dots,\,\mu_k)$ are elements of
$P^+(\lambda,k)$ such that $\blambda\preceq\bmu$ in $P^+(\lambda,k)/\sim$.
\begin{enumerit}
\item[(i)]  We have $$\dim\bigotimes_{s=1}^k V(\lambda_s)\le\dim \bigotimes_{s=1}^k V(\mu_s),$$ with equality iff $\blambda=\bmu$ in $P^+(\lambda,k)/\sim$.
\item[(ii)] Let $i\in I$ be such that $\omega_i$ is minuscule and let $\lambda=N\omega_i$ for some $N\in\bz_+$. Then $$\dim\Hom_{\lie g}(V(\nu), \bigotimes_{s=1}^k V(\lambda_s))\le \dim\Hom_{\lie g}(V(\nu), \bigotimes_{s=1}^k V(\mu_s)),\ \ \nu\in P^+.$$
\item[(iii)] If $\lie g$ is of type  $A_2$, and $k=2$, then
  $$\blambda\preceq\bmu\implies \dim\Hom_{\lie g}  (V(\nu), V(\lambda_1) \otimes V(\lambda_2)) \le \dim\Hom_{\lie g}(V(\nu), V(\mu_1) \otimes V(\mu_2)  ),$$ for all $\nu\in P^+$.
\end{enumerit}

\end{thm}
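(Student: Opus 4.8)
The plan is to prove the three parts by different mechanisms: part (i) reduces to a pure inequality via the Weyl dimension formula, while parts (ii) and (iii) require constructing explicit weight-preserving injections at the level of crystals. For part (i), I would first apply the Weyl dimension formula \er{weyldim} to write
$$\dim\bigotimes_{s=1}^k V(\lambda_s)=\prod_{\alpha\in R^+}\ \prod_{s=1}^k\frac{(\lambda_s+\rho)(h_\alpha)}{\rho(h_\alpha)},$$
and similarly for $\bmu$. Since the denominators agree, it suffices to prove, for each fixed $\alpha\in R^+$, that $\prod_s(\lambda_s+\rho)(h_\alpha)\le\prod_s(\mu_s+\rho)(h_\alpha)$. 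Setting $a_s=\lambda_s(h_\alpha)$, $b_s=\mu_s(h_\alpha)$, and $c=\rho(h_\alpha)>0$, these are nonnegative integers with $\sum_s a_s=\lambda(h_\alpha)=\sum_s b_s$, and by definition $r_{\alpha,\ell}(\blambda)$ is the sum of the $\ell$ smallest of the $a_s$. Hence $\blambda\preceq\bmu$ says exactly that $(a_s)$ \emph{majorizes} $(b_s)$. The key observation is that $x\mapsto\sum_s\log(x_s+c)$ is symmetric and strictly concave, hence strictly Schur-concave, so majorization yields $\prod_s(a_s+c)\le\prod_s(b_s+c)$, with equality iff the two sorted sequences coincide. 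Multiplying over $\alpha\in R^+$ gives (i), and since each factor is positive, equality forces $r_{\alpha,\ell}(\blambda)=r_{\alpha,\ell}(\bmu)$ for all $\alpha,\ell$, i.e.\ $\blambda=\bmu$ in $P^+(\lambda,k)/\!\sim$. This is precisely the combinatorial consequence advertised in the introduction; equivalently one may realize $(a_s)\succ(b_s)$ by a chain of two-coordinate $T$-transforms, reducing to the $k=2$ Clebsch--Gordan situation.

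For parts (ii) and (iii) I would prove the multiplicity inequality by constructing, for each $\nu\in P^+$, an injection from the highest-weight elements of weight $\nu$ in $\bigotimes_s B(\lambda_s)$ into those of $\bigotimes_s B(\mu_s)$, where $B(\mu)$ denotes the crystal of $V(\mu)$; since these counts equal the respective multiplicities $\dim\Hom_{\lie g}(V(\nu),\bigotimes_s V(\cdot_s))$, such an injection gives the desired inequality for every $\nu$. Using the $T$-transform description of $\preceq$ from part (i), together with the $S_k$-symmetry and associativity of the tensor product, I would reduce the construction to a single covering relation, i.e.\ to the case $k=2$, where the hypothesis becomes $\min\{\lambda_1(h_\alpha),\lambda_2(h_\alpha)\}\le\min\{\mu_1(h_\alpha),\mu_2(h_\alpha)\}$ for all $\alpha\in R^+$ with $\mu_1+\mu_2=\lambda_1+\lambda_2$.

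For part (ii), with $\lambda=N\omega_i$ and $\omega_i$ minuscule, each $\lambda_s=n_s\omega_i$, and because $\omega_i(h_\alpha)\in\{0,1\}$ the reduced $k=2$ hypothesis collapses to $\min\{n_1,n_2\}\le\min\{m_1,m_2\}$ with $n_1+n_2=m_1+m_2$. Since $B(\omega_i)$ is the single Weyl orbit $W\omega_i$, the crystals $B(n\omega_i)$ and the relevant tensor products have a transparent description via Littelmann (LS) paths. Here I would write the dominant concatenated paths of $B(n_1\omega_i)\otimes B(n_2\omega_i)$ explicitly and define the injection into the dominant paths of $B(m_1\omega_i)\otimes B(m_2\omega_i)$ by a tail-transfer operation that moves $\min\{m_1,m_2\}-\min\{n_1,n_2\}$ units between the two factors while preserving both the endpoint $\nu$ and dominance. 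The obstacle is verifying that this map is well-defined and injective simultaneously for all $\nu$; minusculeness is exactly what keeps the path combinatorics tractable.

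For part (iii), with $\lie g$ of type $A_2$ and $k=2$, there is no minuscule restriction, so all three positive roots $\alpha_1,\alpha_2,\alpha_1+\alpha_2$ impose genuine constraints and LS paths must be replaced by Kashiwara--Nakashima semistandard tableaux. I would again reduce to a covering relation and construct an explicit injection of the sets of highest-weight (Yamanouchi) tableaux of each shape $\nu$ occurring in $V(\lambda_1)\otimes V(\lambda_2)$ into those for $V(\mu_1)\otimes V(\mu_2)$. I expect this to be the hardest part: unlike the minuscule case the tableau entries may repeat and one must control all three root directions at once, so pinning down the explicit combinatorial rule for the injection—and checking that it is injective and weight-preserving for \emph{every} $\nu$—is where the real difficulty lies.
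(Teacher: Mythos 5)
Your part (i) is correct and complete, and it takes a genuinely cleaner route than the paper. Both arguments begin identically: the Weyl dimension formula reduces everything to a fixed positive root $\alpha$, i.e.\ to $\lie{sl}_2(\alpha)$ applied to the shifted weights $(\lambda_s+\rho)(h_\alpha)$. From there the paper determines the cover relation of the poset explicitly (Proposition \ref{prop:cover1}: every cover is a one-unit transfer $\blambda(j_2,j_1)$ between two coordinates) and checks $(\lambda_{j_1}+1)(\lambda_{j_2}+1)<(\lambda_{j_1}+2)\lambda_{j_2}$ on each cover, whereas you observe that $\blambda\preceq\bmu$ at the root $\alpha$ is exactly majorization of $(\lambda_s(h_\alpha))_s$ over $(\mu_s(h_\alpha))_s$ and invoke strict Schur-concavity of $x\mapsto\sum_s\log(x_s+c)$ with $c=\rho(h_\alpha)\ge 1$. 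This is a valid standard fact, it handles the equality case correctly (equality of the full product forces, factor by factor, that the sorted sequences agree for every $\alpha$, i.e.\ $\blambda\sim\bmu$), and it buys you part (i) without the cover-relation analysis. The trade-off is that the paper's Proposition \ref{prop:cover1} is not dispensable: it is reused as the reduction step for part (ii), so your "alternatively, a chain of $T$-transforms" remark is in fact the version you will need anyway.

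For parts (ii) and (iii) your proposal identifies the correct strategy but stops exactly where the actual mathematical content begins, and in both cases the missing step is the one that can fail if done naively. In (ii), the reduction to a single one-unit transfer between two factors is fine, but the "tail-transfer operation" on LS paths is never defined, and the whole difficulty is the dominance verification: the paper's map rescales a path of shape $r\omega_i$ by $\tfrac{r+1}{r}$ and appends a segment in direction $(r+1)\omega_i$, and proving that the result is $(s-1)\omega_i$-dominant requires the explicit integrality description of $\BB(N\omega_i)$ for minuscule $\omega_i$ (Lemma \ref{lem:LS}) together with the bound $\pi'(a_q')(h_i)\ge -r$, which uses $\nu_p'(h_i)\in\{0,\pm(r+1)\}$. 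None of this is in your sketch. In (iii) the gap is larger still. First, "reduce to a covering relation" is not free in rank $2$: unlike the single-root case, you must actually classify (or at least interpolate between) covers in $P^+(\lambda,2)$, which is the content of Propositions \ref{prop4-1}, \ref{max} and \ref{prop:mid} and the case analysis of Proposition \ref{cover-elements}; the paper itself notes that determining covers beyond $\lie{sl}_2$ is hard, and the answer involves the Weyl group element $\bow$ with $\bow(\lambda_1-\lambda_2)\in P^+$ and two qualitatively different cover types (a single $\bow\omega_i$ transfer versus a transfer of size $\bow(\lambda_1-\lambda_2)(h_1)$). Second, the tableau injection is not a uniform combinatorial rule: in Propositions \ref{t1k} and \ref{t2k} the shift parameter $\ell$ depends on $\nu$ and is chosen extremally over the \emph{entire} set $\BT(\lambda_2)^\nu_{\lambda_1}$, with the well-definedness of the resulting map resting on inequalities such as \eqref{bounds} that encode the cover hypotheses. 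You correctly predict that this is "where the real difficulty lies," but predicting the difficulty is not the same as resolving it; as written, parts (ii) and (iii) are programs rather than proofs.
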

The theorem is proved in the rest of this paper. We conclude this section with some comments on the methods of the proof and give some explanations for the restrictions in the theorem. We also give some context to  our result  by relating them to others in the literature.

\subsection{} To prove part (i) of the Theorem, we use the Weyl dimension formula to reduce the proof to the case of $\lie{sl}_2$. Recall, that the cover relation of $\preceq$ on $P^+(\lambda,k)$ are the pairs $\blambda \prec \bmu$, such that their does not exist $\bnu \in P^+(\lambda,k)$ with $\blambda \prec \bnu \prec \bmu$. For $\lie{sl}_2$ we determine this cover relation which  allows us to determine a sufficient condition for the cover relation in $P^+(\lambda,k)$ in general. Part (ii) of the theorem follows from these ideas along with the Littelmann path model. For an arbitrary simple Lie algebra, it seems quite difficult to determine the cover relations even for $k=2$. For $\lie{sl}_3$ and $k=2$, we  give a sufficient condition for one element to cover another in Section~\ref{section5}. It would appear from our conditions,  that the cover relation depends heavily on the combinatorics of the Weyl group and the root system. Part (iii) of the theorem is proved by using  the information on the cover relation together with results of \cite{KN1994}, \cite{N2002}  on realization of crystal bases.

\subsection{} A subject that has been of much interest has been the notion of Schur positivity. Let $\Lambda$ be the ring of symmetric functions. It is well-known, \cite{M1995}, that it has an integral basis given by the Schur functions $s_\chi$ where $\chi$ is a  partition. A  symmetric function is said to be Schur  positive if it can be written as a non-negative integer linear combination of Schur functions.

 Suppose now that $\lie g$ is of type $A_{r-1}$ and $\lambda\in P^+$, say $\lambda=\sum_{i=1}^{r-1}s_i\omega_i$. Define a partition $\chi(\lambda)=(\chi_1\ge\cdots\ge\chi_r)$ by  $\chi_j=\sum_{p=j}^{r-1} s_p$, if $1\le j\le r-1$ and set $\chi_r=0$.  On the other hand given a partition $\chi = (\chi_1 \geq \chi_2 \geq \ldots \geq \chi_{r-1} \geq 0)$, set $\lambda(\chi)=\sum_{i=1}^{r-1} (\chi_i - \chi_{i+1}) \omega_i\in P^+$.

It is known that the character of $V(\lambda)$ is  $s_{\chi(\lambda)}$.
Parts (ii) and (iii) of Theorem~\ref{mainthm} can be reformulated as: let $\blambda = (\lambda_1, \lambda_2), \bmu = (\mu_1, \mu_2) \in P^+(\lambda,2)$ then $$\blambda \preceq \bmu \implies s_{\chi(\mu_1)} s_{\chi(\mu_2)} - s_{\chi(\lambda_1)} s_{\chi(\lambda_2)} = \sum_{\nu \in P^+} c_{\nu} s_{\chi(\nu)},\ \  c_{\nu} \geq 0\ \ {\rm{for\ all}}\  \nu \in P^+. $$
We conjecture that this is true more generally:
\begin{conj}\label{main-conj}
Let $\lie g$ be a simple Lie algebra, $\lambda \in P^+$, $\blambda = (\lambda_1, \lambda_2), \bmu = (\mu_1, \mu_2) \in P^+(\lambda, 2)$, then
\begin{equation}\label{eqn-dim}
\blambda\preceq\bmu\implies \dim\Hom_{\lie g}  (V(\nu), V(\lambda_1) \otimes V(\lambda_2)) \le \dim\Hom_{\lie g}(V(\nu), V(\mu_1) \otimes V(\mu_2)  ),
\end{equation}
for all $\nu \in P^+$.
\end{conj}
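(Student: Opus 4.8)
The plan is to deduce the conjecture from a statement about a single cover relation, and then to prove that statement by a crystal-theoretic construction that globalizes the Clebsch--Gordan embedding \er{surjection}. The first, purely formal, reduction is as follows. Since finite-dimensional $\lie g$-modules are semisimple, the family of inequalities \er{eqn-dim} (ranging over all $\nu\in P^+$) is equivalent to the existence of an injective homomorphism of $\lie g$-modules
\[
V(\lambda_1)\otimes V(\lambda_2)\hookrightarrow V(\mu_1)\otimes V(\mu_2).
\]
The relation ``$V(\lambda_1)\otimes V(\lambda_2)$ is isomorphic to a $\lie g$-submodule of $V(\mu_1)\otimes V(\mu_2)$'' is transitive, so it suffices to establish such an embedding whenever $\blambda\prec\bmu$ is a cover in the poset $P^+(\lambda,2)/\!\sim$. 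In particular the problem splits into two independent tasks: (a) describe the covers of $P^+(\lambda,2)/\!\sim$, and (b) construct the embedding across a single cover.

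For task (b) I would pass to crystals. By the tensor product/branching rule (\cite{KN1994}, \cite{N2002}), $\dim\Hom_{\lie g}(V(\nu),V(\lambda_1)\otimes V(\lambda_2))$ equals the number of elements $b\in\BB(\lambda_2)$ of weight $\nu-\lambda_1$ such that $u_{\lambda_1}\otimes b$ is a $\lie g$-highest weight element, where $u_{\lambda_1}$ is the highest weight vertex of $\BB(\lambda_1)$; equivalently, the number of $b\in\BB(\lambda_2)$ of weight $\nu-\lambda_1$ that are ``$\lambda_1$-dominant'' in the sense that $\varepsilon_i(b)\le\lambda_1(h_i)$ for all $i\in I$. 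Thus \er{eqn-dim} is equivalent to the existence, for every $\nu\in P^+$, of an injection from the set of $\lambda_1$-dominant elements of $\BB(\lambda_2)$ of weight $\nu-\lambda_1$ into the set of $\mu_1$-dominant elements of $\BB(\mu_2)$ of weight $\nu-\mu_1$. This is exactly the combinatorial shape of the proofs of \thmref{mainthm}(ii) (via the Littelmann path model, for $\lambda$ minuscule) and \thmref{mainthm}(iii) (via Kashiwara--Nakashima tableaux, for type $A_2$), and the goal is to produce one uniform injection valid across any cover.

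The model for this injection is the rank-one case. Recall that for $k=2$ the order is governed entirely by the numbers $\min\{\lambda_1(h_\alpha),\lambda_2(h_\alpha)\}$, $\alpha\in R^+$, and that \er{surjection} gives the embedding $V(r)\otimes V(s)\hookrightarrow V(r+1)\otimes V(s-1)$ precisely when this minimum strictly increases. I would try to realize a cover $\blambda\prec\bmu$ as an elementary move attached to a single positive root $\alpha$ (one that raises $\min\{\lambda_1(h_\alpha),\lambda_2(h_\alpha)\}$ by the least possible amount), restrict the tensor product to the subalgebra $\lie{sl}_2(\alpha)$, apply the Clebsch--Gordan embedding \er{surjection} there, and then propagate it through the rest of the crystal using the Kashiwara operators $\tilde e_i,\tilde f_i$. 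Concretely, one wants the restriction-to-$\lie{sl}_2(\alpha)$ embedding to carry $\lambda_1$-dominant vertices to $\mu_1$-dominant vertices, injectively, in such a way that the associated $\lie g$-type $\nu$ is preserved.

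The hard part will be exactly the interface between the local ($\lie{sl}_2$) move and the global ($\lie g$) structure, and this is where I expect the main obstacle to lie---indeed it is what keeps the statement a conjecture. First, as the authors note, the cover relations of $P^+(\lambda,2)/\!\sim$ for general $\lie g$ are not single-root moves: raising one minimum $\min\{\lambda_1(h_\alpha),\lambda_2(h_\alpha)\}$ while keeping both components dominant couples all the positive roots through the Weyl-group combinatorics, so even task (a) is delicate beyond type $A_2$. Second, the $\lie{sl}_2(\alpha)$-embedding need not respect $\lie g$-dominance: an element that is $\lambda_1$-dominant for $\lie g$ may be sent to one that is highest weight only for $\lie{sl}_2(\alpha)$, and controlling the non-minuscule multiplicities so that the global map remains well-defined and injective is precisely the point where the path argument of \thmref{mainthm}(ii) uses the minuscule hypothesis and the tableau argument of \thmref{mainthm}(iii) uses the very small rank. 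A plausible way around this is to replace the ad hoc local move by the current-algebra realization hinted at in the introduction, in which $V(\lambda_1)\otimes V(\lambda_2)$ carries a graded $\lie g\otimes\bc[t]$-module structure and the embeddings become maps of graded modules; proving the conjecture would then reduce to comparing (truncated) local Weyl modules along $\preceq$, where the homomorphisms may be constructed more functorially than on the level of individual crystal elements.
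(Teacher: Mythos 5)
This statement is labelled a \emph{Conjecture} in the paper, and the paper contains no proof of it; it only establishes the special cases recorded as Theorem~\ref{mainthm}\,(ii) (for $\lambda$ a multiple of a minuscule fundamental weight, via the Littelmann path model) and Theorem~\ref{mainthm}\,(iii) (for $\lie g$ of type $A_2$ and $k=2$, via Kashiwara--Nakashima tableaux), plus the weaker dimension inequality of part (i). Your submission is likewise not a proof: it is a program, and you say so yourself when you write that the interface between the local $\lie{sl}_2(\alpha)$ move and the global dominance condition ``is what keeps the statement a conjecture.'' The formal reductions you make are correct --- by complete reducibility the family of inequalities \eqref{eqn-dim} is equivalent to an embedding $V(\lambda_1)\otimes V(\lambda_2)\hookrightarrow V(\mu_1)\otimes V(\mu_2)$, the finiteness of $P^+(\lambda,2)/\!\sim$ reduces the problem to cover relations, and the crystal-theoretic count of $\lambda_1$-dominant elements of weight $\nu-\lambda_1$ is the right reformulation. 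But this is exactly the framework the paper already sets up, and both of the substantive tasks you isolate remain open in your write-up: (a) the cover relations of $P^+(\lambda,2)/\!\sim$ are not determined for general $\lie g$ (the paper only controls them via Propositions~\ref{prop4-1}, \ref{prop:mid} and \ref{cover-elements}, and even then completely only in rank $2$ of type $A$), and (b) no injection of dominant crystal elements across a cover is actually constructed. The specific mechanism you propose for (b) --- restrict to $\lie{sl}_2(\alpha)$, apply \eqref{surjection}, and propagate with Kashiwara operators --- founders on the problem you yourself identify: an $\lie{sl}_2(\alpha)$-equivariant embedding has no reason to preserve $\lie g$-dominance or the isotypic decomposition, and a cover in the global poset is generally not a single-root move.

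In short, there is no gap to localize because there is no argument: every step that would require work is deferred, and the fallback suggestion (graded $\lie g\otimes\bc[t]$-modules and truncated local Weyl modules) is the same heuristic the authors mention in their introduction without claiming it yields a proof. If you want to contribute something verifiable here, the realistic targets are the cases the paper actually proves; your outline for those is consistent with, and essentially identical to, the paper's own strategy.
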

Part (i) of the theorem could be viewed as giving some additional, but very  limited evidence for the conjecture.

\subsection{}  In \cite{FFLP2005} the authors introduced the notion of a row shuffle.
Thus if $\chi = (\chi_1 \geq \chi_2 \geq  \ldots \geq \chi_{n-1} \geq 0)$ and $ \xi = ( \xi_1 \geq  \xi_2 \geq  \ldots \geq  \xi_{n-1} \geq 0)$ are two partitions with at most $n-1$ parts, then the row shuffle is a pair of partitions $(\rho^1(\chi,\xi), \rho^2(\chi,\xi))$ defined  as follows.
Order the multi set $\{ \chi_1, \ldots,
\chi_{r-1}, \xi_1, \ldots,\xi _{n-1} \}$ decreasingly, say $\psi_1 \geq \psi_2 \geq \ldots \geq \psi_{2n-2}$. Set, $$\rho^1(\chi, \xi) = (\psi_1 \geq \psi_3 \geq \ldots \geq \psi_{2n-3} \geq 0),\qquad \rho^2(\chi, \xi) = (\psi_2 \geq \psi_4 \geq \ldots \geq \psi_{2n-2} \geq 0).$$ In other words we are shuffling the rows of the joint partition.
It was conjectured in \cite{FFLP2005} and proved in \cite{LPP2007} that $s_{\rho^1(\chi, \xi)}s_{\rho^2(\chi,\xi)} -  s_\chi s_\xi$ is Schur positive. Related conjectures can also be found in \cite{O1997} and \cite{LLT1997}.  Partial results on this conjecture were also obtained in \cite{BBR2006}, \cite{BM2004}, \cite{Mc2008}, \cite{MW2009}, \cite{PW2008}.

\subsection{} The connection of the row shuffle with our partial order is made as follows. Suppose that $\lie g$ is of type $A_{n-1}$, let $\nu,\mu\in P^+$ and set $\lambda=\mu+\nu$. In Section~\ref{maxslr}, we define an element $\blambda_{\max}=(\lambda^1,\lambda^2)\in P^+(\lambda,2)/\sim$ and prove that it is the unique maximal element in this set. It is a simple calculation to prove that if we take $\xi$ and $\chi$ to be the dual of the partitions $\chi(\nu)$ and $\chi(\mu)$, then $\rho^1(\xi,\chi)$ and $\rho^2(\xi, \chi)$ are the duals of the  partition  $\chi(\lambda^1)$ and $\chi(\lambda^2)$ respectively. It is well-known \cite{M1995} that there exists an involution $\omega$  of the ring $\Lambda$ which maps the Schur function associated to  a partition $\xi$ to the Schur function associated to its dual. Hence the result of \cite{LPP2007} proves the following statement:
let $\mu, \nu \in P^+$ and let $\lambda=\mu+\nu$. Then
$$ \dim\Hom_{\lie g}(V(\eta), V(\mu)\otimes V(\nu))\le \dim\Hom_{\lie g}(V(\eta), V(\lambda^1)\otimes V(\lambda^2)) \text{ for all } \eta \in P^+.
$$
In other words, their result proves that  part (iii) of Theorem \ref{mainthm} is true for $A_{n-1}$ in the case when $\bmu=\blambda_{\max}$.\\
We  remark here, that  in \cite{LPP2007}  the authors proved further,  that in the case where $\lie g$ is of type $A_{n-1}$, having Schur positivity for $k=2$ (and $\bmu = \blambda_{\max}$) implies Schur positivity for arbitrary $k$, e.g. they proved
$$ \dim\Hom_{\lie g}(V(\eta), V(\mu_1 )\otimes \ldots \otimes V(\mu_k))\le \dim\Hom_{\lie g}(V(\eta), V(\lambda^1)\otimes \ldots \otimes V(\lambda^k)) 
$$
for all $k \geq 2$, $(\mu_1, \ldots, \mu_k) \in  \mathcal{P}(\lambda,k)$ and $\eta \in P^+$, where $(\lambda^1, \ldots, \lambda^k)$ is the $k$-row shuffle of $(\mu_1, \ldots, \mu_k)$. One can show (similar to Proposition~\ref{maxslr}), that $(\lambda^1, \ldots, \lambda^k)$ is the unique maximal element in $ \mathcal{P}(\lambda,k)$.

\subsection{}
Suppose $\lie g$ is of type $A_n$, then Lam, Postnikov and Pylyavskyy stated Conjecture~\ref{main-conj} in an unpublished work (we may refer here to \cite{DP2007}). The following first step in proving this conjecture in the $A_{n}$-case has been taken in \cite{DP2007}. It is shown there that for $  (\mu_1, \mu_2) \preceq (\lambda_1, \lambda_2) \in P^+(\lambda, 2)$ and $\nu \in P^+$:
\[
\dim\Hom_{\lie g}(V(\nu), V(\mu_1) \otimes V(\mu_2)  ) \neq 0 \Rightarrow  \dim\Hom_{\lie g}  (V(\nu), V(\lambda_1) \otimes V(\lambda_2)) \neq 0.
\]
Their approach is completely different from ours, as they use the Horn-Klyachko inequalities. It would be interesting to see if some of the ideas apply also when $\lie g$ is not of type $A_n$.


\section{Proof of \texorpdfstring{Theorem~\ref{mainthm}\,{\rm (i)}}{Theorem 1 (i)}}
%
\label{section3}

The main idea in the proof of part (i) of the theorem is
to show that when $\lambda$ is a multiple of a fundamental weight,
the partial order on $P^+(\lambda,k)$ is determined by
the partial order on $P^+(\lambda,2)$.
We prove this in the first part of the section
and then deduce Theorem~\ref{mainthm}(i).

\subsection{}

{\em Assume until further notice that $\lie{g}$ is an arbitrary
finite-dimensional simple Lie algebra, and
%
%
\begin{equation} \label{lamome}
\lambda\in\bz_+ \omega_i \quad \text{for some $i\in I$}.
\end{equation}
In particular, we can and will think of $\lambda$ as a non-negative integer.
It is clear that elements of $P^+(\lambda,k)$ are just $k$-tuples of non-negative integers
which add up to $\lambda \in \bz_{+}$. If $\blambda\in P^+(\lambda,k)$, we have
$$r_{\alpha,\ell}(\blambda)=d_ir_{\alpha_i,\ell}(\blambda),\ \  h_\alpha=\sum_{i\in I}d_ih_i,$$
and it follows that if $\blambda,\bmu\in P^+(\lambda,k)$, then
$$\blambda\preceq\bmu \iff
r_{\alpha_i,\ell}(\blambda) \le r_{\alpha_i,\ell}(\bmu), \quad 1 \le \ell \le k. $$
In other words, the partial order is determined entirely by $\alpha_i$.
So, we shall drop the dependence on $\alpha$ and write $r_\ell(\blambda)$
for $r_{\alpha_i,\ell}(\blambda)$.%
}

\subsection{}
%
\label{section-classes}

%
%
\begin{lem} \label{Sk-orbit}
Let $\blambda,\,\bmu\in P^+(\lambda,k)$.
Then, $\blambda\sim\bmu$ iff $\bmu=\sigma \blambda$ for some $\sigma\in S_k$,
i.e. the equivalence class of $\blambda$ is exactly the $S_k$ orbit of $\blambda$.
\end{lem}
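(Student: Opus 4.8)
The plan is to reduce everything to the elementary observation that, under the standing assumption \eqref{lamome}, the quantity $r_\ell(\blambda)$ is nothing but the sum of the $\ell$ smallest entries of the $k$-tuple $\blambda$. Writing $\blambda=(\lambda_1,\dots,\lambda_k)$ and identifying each $\lambda_s$ with the non-negative integer $\lambda_s(h_{\alpha_i})$, the minimum defining $r_\ell(\blambda)$ is attained by choosing the indices $i_1<\dots<i_\ell$ corresponding to the $\ell$ smallest among the $\lambda_s$, since the entries are non-negative (in fact this holds for arbitrary real entries). Thus, if $a_1\le a_2\le\cdots\le a_k$ denotes the non-decreasing rearrangement of the components of $\blambda$, then $r_\ell(\blambda)=a_1+\cdots+a_\ell$ for every $1\le\ell\le k$.

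The ``if'' direction is then immediate: if $\bmu=\sigma\blambda$ for some $\sigma\in S_k$, then $\blambda$ and $\bmu$ have the same multiset of components, hence the same non-decreasing rearrangement, and therefore $r_\ell(\blambda)=r_\ell(\bmu)$ for all $\ell$; that is, $\blambda\sim\bmu$.

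For the ``only if'' direction, I would suppose $\blambda\sim\bmu$ and let $a_1\le\cdots\le a_k$ and $b_1\le\cdots\le b_k$ be the non-decreasing rearrangements of $\blambda$ and $\bmu$ respectively. By the observation above, the hypothesis $r_\ell(\blambda)=r_\ell(\bmu)$ for all $1\le\ell\le k$ becomes
\[
a_1+\cdots+a_\ell=b_1+\cdots+b_\ell,\qquad 1\le\ell\le k.
\]
Taking successive differences of consecutive partial sums yields $a_\ell=b_\ell$ for every $\ell$. Hence $\blambda$ and $\bmu$ have identical non-decreasing rearrangements, i.e. the same multiset of components, and so there exists $\sigma\in S_k$ with $\bmu=\sigma\blambda$.

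I expect no serious obstacle here: the only point requiring care is the identification of $r_\ell$ with the sum of the $\ell$ smallest entries, which is precisely where assumption \eqref{lamome} (forcing the components to be scalars, so that the $\min$ over partial sums is controlled by sorting) is used; once this is in hand, the statement follows by a telescoping argument. I would also note that the same reasoning shows that $\blambda\preceq\bmu$ is equivalent to the dominance-type condition $\sum_{j=1}^\ell a_j\le\sum_{j=1}^\ell b_j$ on the sorted tuples, which is exactly the combinatorial input needed for the proof of Theorem~\ref{mainthm}\,(i).
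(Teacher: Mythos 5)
Your proof is correct and follows essentially the same route as the paper: both reduce to the observation that $r_\ell$ of a tuple is the initial partial sum of its non-decreasing rearrangement, and then conclude by telescoping that equality of all partial sums forces equality of the sorted tuples.
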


\begin{pf}
The fact that $\blambda=\sigma \bmu$ implies $\blambda\sim\bmu$ is
clear from the definition of $P^+(\lambda,k)$.
For the converse, choose $\sigma,\sigma'\in S_k$
so that $\sigma\blambda$ and $\sigma'\bmu$ are partitions of $\lambda$, say
$$\sigma\blambda=(\lambda_k\ge\cdots \ge\lambda_1),\qquad
 \sigma'\bmu=(\mu_k\ge\dots\ge\mu_1).$$
Since $\blambda\sim\sigma \blambda$,
it follows that $\sigma\blambda\sim\sigma'\bmu$.
But this implies, that
$$\lambda_1=\mu_1, \quad
  \lambda_1+\lambda_{2}=\mu_1+\mu_{2}, \quad \dots, \quad
  \lambda_1 + \lambda_2+\cdots + \lambda_k=
  \mu_1+\cdots+\mu_k,$$
forcing $\sigma\blambda=\sigma'\bmu$ and the Lemma is proved.
\end{pf}

{\em From now on, we will identify the set $P^+(\lambda, k)/\sim$ of equivalence classes
with partitions of $\lambda$ with at most $k$ parts.
By abuse of notation we  continue to denote this set
as $P^+(\lambda,k)$ and note that $\preceq$ is now a partial order on this set.
As a consequence, we shall also assume without comment that
$r_\ell(\blambda)=\lambda_1+\cdots+\lambda_\ell$ for $1\le \ell\le k$.}

\subsection{}
%
\label{maxelement}

Let $\lambda$ be as in \eqref{lamome}, and $k\in\bz_+$.
Write $\lambda=k\lambda_0+p_0$ where $0\le p_0<k$ and $\lambda_0\in\bz_+$.
Define $\blambda_{\max}=(\lambda_k\ge\cdots\ge \lambda_1)\in P^+(\lambda,k)$ by
$$\lambda_j=
 \begin{cases}
   \lambda_0 & 1 \le j \le k - p_0, \\
   \lambda_0+1 & j > k - p_0.
 \end{cases}$$
Observe that
$$r_\ell(\blambda_{\max})=
 \begin{cases}
   \ell \lambda_0 & \text{if $1\le  \ell \leq k-p_0$}, \\
   (\ell+1) \lambda_0 -k+p_0 & \text{otherwise}.
 \end{cases}.$$
The following result justifies the notation.

\begin{lem}\label{maxsl2lem}
Keep the notation above.
For all $\blambda\in P^+(\lambda,k)$, we have $\blambda\preceq \blambda_{\max}$.
Moreover, $\blambda_{\max}$ is the unique element of $P^+(\lambda,k)$ satisfying
\begin{equation}\label{minmax}
\max_{1 \leq i \leq k} \{\lambda_i\} - \min_{1 \leq i \leq k}\{\lambda_i\} \leq 1.
\end{equation}
\end{lem}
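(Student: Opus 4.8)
The plan is to prove two assertions: first, that $\blambda_{\max}$ is the unique partition in $P^+(\lambda,k)$ satisfying the ``flatness'' condition \eqref{minmax}, and second, that this flat partition dominates every other element of $P^+(\lambda,k)$ under $\preceq$. I would handle uniqueness first, since it is the easier of the two. Given the identification of $P^+(\lambda,k)$ with partitions $\lambda_1\le\cdots\le\lambda_k$ of the integer $\lambda$ with at most $k$ parts, condition \eqref{minmax} says $\lambda_k-\lambda_1\le 1$, so every part equals either $\lambda_0$ or $\lambda_0+1$ for some integer $\lambda_0$. If there are $p$ parts equal to $\lambda_0+1$, then $\lambda=k\lambda_0+p$, and the constraint $0\le p<k$ forces $\lambda_0=\lfloor\lambda/k\rfloor$ and $p=p_0$ uniquely by the division algorithm. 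Hence the flat partition is forced to be exactly $\blambda_{\max}$, establishing uniqueness.

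For the domination statement, I would argue that $\blambda_{\max}$ maximizes every partial sum $r_\ell=\lambda_1+\cdots+\lambda_\ell$ simultaneously; equivalently, since all elements have the same total $\lambda$, it \emph{minimizes} the complementary tail sums. The cleanest route is a convexity/majorization argument: writing each partition in increasing order $\lambda_1\le\cdots\le\lambda_k$, the quantity $r_\ell(\blambda)$ is the sum of the $\ell$ smallest parts. I claim that among all integer partitions of $\lambda$ with at most $k$ parts, the flat partition $\blambda_{\max}$ has the largest possible sum of the $\ell$ smallest parts, for every $\ell$. To see this, suppose some $\blambda$ has a larger $\ell$-th partial sum; then averaging forces its smallest $\ell$ parts to exceed $\ell\lambda_0$ (up to the correction from $p_0$), but then the remaining $k-\ell$ parts must absorb a deficit, and since parts are ordered increasingly this contradicts either the ordering or the total being $\lambda$. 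I would make this precise by a direct exchange argument: if $\blambda\ne\blambda_{\max}$, there exist indices where $\blambda$ is ``too spread out,'' and transferring a unit from a largest part to a smallest part strictly increases some small partial sum while keeping the total fixed, driving any maximizer toward the flat configuration.

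The key technical step, and the main obstacle, is verifying that the explicit formula given for $r_\ell(\blambda_{\max})$ indeed bounds $r_\ell(\blambda)$ from above for \emph{every} $\blambda$, uniformly in $\ell$. The subtlety is the two-regime shape of the formula (the breakpoint at $\ell=k-p_0$ coming from the $p_0$ larger parts). I would treat this by a counting estimate: for arbitrary increasing $\lambda_1\le\cdots\le\lambda_k$ summing to $\lambda$, I bound $\lambda_1+\cdots+\lambda_\ell$ using the fact that the $\ell$ smallest of $k$ nonnegative integers summing to $k\lambda_0+p_0$ can be at most $\ell\lambda_0$ when $\ell\le k-p_0$ (otherwise the remaining $k-\ell>p_0$ parts, each at least as large, would overshoot $\lambda$), and at most $(\ell+1)\lambda_0-k+p_0$ in the complementary regime by the analogous tail computation. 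Matching these bounds against the stated values of $r_\ell(\blambda_{\max})$ completes the inequality $r_\ell(\blambda)\le r_\ell(\blambda_{\max})$, hence $\blambda\preceq\blambda_{\max}$.

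Finally, I would note the whole argument is essentially the statement that the flat partition is the maximum in dominance order restricted to partitions of a fixed integer $\lambda$ into at most $k$ parts, read through the convention that $\preceq$ compares sums of smallest parts; the only genuine care needed is keeping the book-keeping of $\lambda_0$ and $p_0$ consistent with the increasing-order convention adopted just before the lemma.
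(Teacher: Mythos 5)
Your proposal is correct and, once you settle on the ``counting estimate,'' it is essentially the paper's own argument: uniqueness by the division algorithm, and maximality by assuming some partial sum $r_\ell(\blambda)$ exceeds $r_\ell(\blambda_{\max})$, deducing that the remaining $k-\ell$ parts are each at least $\lambda_0+1$ (resp.\ at most $\lambda_0$ in the other regime), and contradicting the total being $\lambda$. The earlier detours through majorization and exchange arguments are unnecessary, and note the small slip that $\ell\le k-p_0$ gives $k-\ell\ge p_0$ rather than $k-\ell>p_0$, which is harmless since the count still overshoots by at least $1+p_0$.
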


\begin{proof}
It is clear that $\blambda_{\max}$ satisfies \eqref{minmax}.
If $\bmu=(\mu_k \geq \cdots \geq \mu_1) \in P^+(\lambda,k)$ also
satisfies \eqref{minmax}, then $\mu_j-\mu_1\le 1$ for all $1\le j\le k$.
If $\mu_j=\mu_1$ for all $j$ then $\lambda=k\mu_1$ and hence
we would have $p_0=0$ and $\mu_1=\lambda_0$ as required.
Otherwise, fix $j_0\le k-1 $ such that $\mu_j=\mu_1$ for
$1\le j\le k-j_0$ and $\mu_j=\mu_1+1$ otherwise. Then
$$\sum_{s=1}^k\mu_s=k\mu_1+ j_0 =\lambda= k\lambda_0+k_0=
  \sum_{s=1}^k\lambda_s,$$
and hence $\mu_1=\lambda_0$ and $j_0=k_0$,
which proves that $\bmu=\blambda_{\max}$.

Suppose for a contradiction that there exists
$\blambda =  (\lambda_k \ge \cdots \ge \lambda_1) \in P^+(\lambda,k)$ and
$1\le j\le k-p_0$ with $r_j(\blambda)>j\lambda_0$ and
assume that $j$ is minimal with this property.
Since $$r_j(\blambda)=\lambda_j+r_{j-1}(\blambda)>j\lambda_0,$$ we get  $\lambda_\ell\ge\lambda_j\ge  \lambda_0+1$ for all $\ell\ge j$. This gives,
$$\lambda=\lambda_k+\cdots+\lambda_{j+1}+r_j(\blambda)
>(k-j)(\lambda_0+1)+j\lambda_0\ge k\lambda_0+p_0 = \lambda,$$ which is a contradiction. The case $j>k-p_0$ is similar and we omit the details.
\end{proof}

\subsection{}
%
We now  prove that the partial order on $P^+(\lambda,k)$ is entirely determined by the partial order $P^+(\lambda,2)$. The first step is the following result which determines the cover relation in $P^+(\lambda,2)$.
\begin{lem}\label{cover}
Suppose that $\blambda=(\lambda_1\ge\lambda_2)\in P^+(\lambda,2)$ and
assume that $\blambda\ne\blambda_{\max}$.
Then $\bmu \in P^+(\lambda,2)$ covers $\blambda$ iff
$$\bmu =(\lambda_2-1\ge\lambda_1+1).$$
\end{lem}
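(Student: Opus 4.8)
The plan is to observe that when $k=2$ the preorder collapses to a single numerical invariant, so that $P^+(\lambda,2)$ is in fact a \emph{chain}, and then to read off the cover relation from this chain structure. First I would note that $r_2(\blambda)=\lambda_1+\lambda_2=\lambda$ is the same for every element of $P^+(\lambda,2)$, so the relation $\blambda\preceq\bmu$ is governed entirely by $r_1$, the smaller of the two parts; explicitly $\blambda\preceq\bmu$ iff $r_1(\blambda)\le r_1(\bmu)$. Since $r_1$ ranges over $\{0,1,\dots,\lfloor\lambda/2\rfloor\}$ and each value $m$ is attained by the unique partition $(\lambda-m\ge m)$, the assignment $\blambda\mapsto r_1(\blambda)$ is an order isomorphism from $P^+(\lambda,2)$ onto this integer interval. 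In particular $\preceq$ is a total order.

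Next I would invoke the general fact that in a finite chain, $\bmu$ covers $\blambda$ precisely when $\bmu$ is the immediate successor, i.e. $r_1(\bmu)=r_1(\blambda)+1$. Because $\blambda\ne\blambda_{\max}$, \lemref{maxsl2lem} (which characterizes $\blambda_{\max}$ as the unique element with $\max-\min\le 1$) forces the two parts of $\blambda$ to differ by at least $2$. Hence $r_1(\blambda)<\lfloor\lambda/2\rfloor$, so a successor exists, and moving one unit from the larger part to the smaller part yields an honestly dominant pair (both parts nonnegative, correctly ordered). This successor is exactly the displayed $\bmu$, with smaller part increased by $1$ and larger part decreased by $1$.

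Finally I would assemble the two directions of the equivalence. For one direction, any $\bmu$ with $r_1(\bmu)=r_1(\blambda)+1$ covers $\blambda$, since integrality of $r_1$ leaves no value strictly between $r_1(\blambda)$ and $r_1(\bmu)$ and hence no intermediate element $\bnu$. Conversely, any cover must satisfy $r_1(\bmu)=r_1(\blambda)+1$: if the gap were larger, the unique partition with intermediate smaller part would sit strictly between $\blambda$ and $\bmu$, contradicting the cover property.

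I do not expect a serious obstacle here: once the reduction to the chain $\{0,\dots,\lfloor\lambda/2\rfloor\}$ is in place, everything is forced by the integrality of $r_1$. The only points requiring genuine care are the appeal to \lemref{maxsl2lem} to exclude the degenerate case $\blambda=\blambda_{\max}$, where no cover exists, and the verification that the candidate $\bmu$ is a valid element of $P^+(\lambda,2)$, which is precisely where the hypothesis that the parts differ by at least $2$ is used.
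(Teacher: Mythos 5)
Your proof is correct and follows essentially the same route as the paper's: both arguments reduce the $k=2$ order to the single integer invariant $r_1=\min\{\lambda_1,\lambda_2\}$, use \lemref{maxsl2lem} to guarantee the parts differ by at least $2$ so that the candidate $\bmu$ is a genuine partition, and conclude that the unique cover is the immediate successor in $r_1$. The paper phrases the last step as ``any $\bnu\succ\blambda$ satisfies $\bmu\preceq\bnu$'' rather than via your explicit order isomorphism with $\{0,\dots,\lfloor\lambda/2\rfloor\}$, but this is only a cosmetic difference.
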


\begin{proof}
Since  $\blambda\ne\blambda_{\max}$, we see from Lemma \ref{maxsl2lem} that
$\lambda_2-\lambda_1>1$. Hence, $\bmu = (\lambda_2-1\ge\lambda_1+1)\in P^+(\lambda,k)$ and
$\blambda \preceq \bmu$. Suppose that there exists $\bnu=(\nu_2\ge\nu_1)$
with $\blambda \prec \bnu$. Then $\lambda_1<\nu_1$ and hence
we get $\bmu \preceq \bnu$ which proves the lemma.
\end{proof}

\subsection{}
%
\label{2impliesk}

Let $\blambda=(\lambda_k \ge \cdots \ge \lambda_1) \in P^+(\lambda,k)$
be such that $\lambda_k-\lambda_1\ge 2$. Then there exists
$1 \le j_1 < j_2\le k$ with
$$\lambda_{j_1} < \lambda_{j_1+1}, \qquad
  \lambda_{j_2-1}<\lambda_{j_2}, \qquad
  \lambda_{j_1} +2 \leq \lambda_{j_2}.$$
For each such pair of $(j_2,j_1)$, we define a partition
$\blambda(j_2,j_1)=(\lambda_k' \ge \cdots \ge \lambda_1')$ by:
\begin{equation}\label{shift2}
\lambda_i'=
\begin{cases}
 \lambda_i & i \ne j_1,j_2,\\
 \lambda_{j_2}-1 & i=j_2,\\
 \lambda_{j_1}+1 & i=j_1.
\end{cases}
\end{equation}
Observe that
\begin{equation} \label{shift}
r_\ell(\blambda(j_2,j_1))=
 \begin{cases}
   r_\ell(\blambda)+1 & j_1\le \ell<j_2, \\
   r_\ell(\blambda) & \text{otherwise},
 \end{cases}
\end{equation}
and so $\blambda \prec \blambda(j_2,j_1)$.
The following proposition shows that the partial order on
$P^+(\lambda,k)$ is controlled by the partial order on $P^+(\lambda,2)$.
%
%
\begin{prop} \label{prop:cover1}
Let $\blambda=(\lambda_k \ge \cdots \ge \lambda_1)\in P^+(\lambda, k)$ and
assume that $\bmu\in P^+(\lambda,k)$ covers $\blambda$.
Then $\bmu=\blambda(j_2,j_1)$ for some $1\le j_1<j_2\le k$
with $\lambda_{j_1} < \lambda_{j_1 + 1}$, $\lambda_{j_2-1}<\lambda_{j_2}$ and
$\lambda_{j_1} +2 \leq \lambda_{j_2}$.
\end{prop}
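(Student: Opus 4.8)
The proposition asks me to show that every cover relation in the poset $P^+(\lambda,k)$ (with $\lambda$ a multiple of a fundamental weight, so elements are partitions) arises from one of the elementary "shift" moves $\blambda \mapsto \blambda(j_2,j_1)$ described in \eqref{shift2}. Let me think about what I actually need to prove and how.

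The core combinatorial fact being asserted is that the poset, ordered by partial sums $r_\ell(\blambda)=\lambda_1+\cdots+\lambda_\ell$, has its covering relations realized entirely by the moves that take one box from a part $\lambda_{j_2}$ and add it to a strictly smaller part $\lambda_{j_1}$ (keeping the sorted partition structure). This is precisely the covering structure of the dominance order on partitions, which is classical.

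Let me first sort out my reasoning.
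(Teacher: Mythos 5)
Your proposal stops before any argument begins: after identifying the statement as ``the covering structure of the dominance order on partitions, which is classical,'' you write ``Let me first sort out my reasoning'' and supply nothing further. There is no proof here to check --- no induction, no case analysis, no construction of the pair $(j_2,j_1)$ from a given cover $\bmu$ of $\blambda$. Appealing to the classical result (Brylawski's description of covers in the dominance lattice) could in principle be a legitimate shortcut, but then you would have to (a) actually cite or reprove that result, and (b) reconcile the conventions: the paper orders parts increasingly ($\lambda_k \ge \cdots \ge \lambda_1$) and compares partial sums $r_\ell(\blambda)=\lambda_1+\cdots+\lambda_\ell$ of the $\ell$ \emph{smallest} parts, with $\blambda \preceq \bmu$ when these sums \emph{increase}; this is the order-dual of the usual dominance order, so the classical cover description must be dualized and re-indexed before it yields the stated conclusion, including the side conditions $\lambda_{j_1}<\lambda_{j_1+1}$, $\lambda_{j_2-1}<\lambda_{j_2}$, $\lambda_{j_1}+2\le\lambda_{j_2}$ that pin down $j_1,j_2$ as the endpoints of blocks of equal parts. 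Note also that the proposition only asserts the \emph{necessary} direction (every cover is some $\blambda(j_2,j_1)$), which is weaker than the full classical characterization, so a self-contained argument is not out of reach.

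For comparison, the paper proves this directly by induction on $k$: given a cover $\bmu$ of $\blambda$, either all partial sums strictly increase (in which case any admissible $\blambda(j_2,j_1)$ is squeezed between $\blambda$ and $\bmu$, forcing equality), or some partial sum agrees at an index $\ell$, in which case one truncates to the first $\ell$ parts (or drops the first part when $\ell=1$), checks that the truncation of $\bmu$ still covers the truncation of $\blambda$, and applies the inductive hypothesis. If you want to complete your proposal, you must either carry out an argument of this kind or give a precise statement and proof (or citation with the dualization spelled out) of the dominance-order cover theorem.
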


\begin{pf}
We proceed by induction on $k$. Lemma \ref{cover} shows that induction begins at $k=2$.
Assume that $k > 2$.
%
%
Let $\blambda\in P^+(\lambda,k)$, and
assume that $\bmu\in P^+(\lambda,k)$ covers $\blambda$.

Suppose $r_\ell(\blambda)<r_\ell(\bmu)$ for all $1\le \ell < k$.
Since $\blambda$ has a cover, it follows that $\blambda\ne\blambda_{\max}$
and so there exists $1\le j_1<j_2\le \ell$ such that $\blambda(j_2,j_1)$ is defined.
Now, \eqref{shift} shows that $$\blambda \prec  \blambda(j_2,j_1)\preceq\bmu,$$
and hence $\bmu=\blambda(j_2,j_1)$.

Suppose now that there exists $1\le \ell<k$ such that
$r_\ell(\bmu)=r_\ell(\blambda)$ and $\ell$ is minimal with this property.
Consider first the case when $\ell=1$, i.e., $\mu_1=\lambda_1$. Then,
$$\bmu_0=(\mu_k\ge\cdots \ge\mu_2), \qquad
  \blambda_0=(\lambda_k\ge\cdots\ge \lambda_2),$$
are distinct elements of $P^+(\lambda-\lambda_1, k-1)$ (since $\bmu\ne\blambda$).
Moreover, we claim that $\bmu_0$ covers $\blambda_0$.
If there exists $\bnu_0=(\nu_k\ge\cdots\ge \nu_2)\in P^+(\lambda-\lambda_1, k-1)$
such that
$$\blambda_0 \prec \bnu_0 \preceq \bmu_0,$$
then $\nu_2\ge\lambda_2\ge\lambda_1$. Hence, if we set
$\bnu=(\nu_k \geq \cdots \geq \nu_2 \geq \lambda_1) \in P^{+}(\lambda,\,k)$,
then we get
$$\blambda\prec\bnu\preceq\bmu.$$
This forces $\bnu=\bmu$ and hence $\bnu_0=\bmu_0$.
By induction on $k$ and noting that $k-1\ge 2$, we see that
$$\bmu_0=\blambda_0(j_2,j_1),$$
for some $2\le j_1<j_2\le k$ and hence $\bmu=\blambda(j_2,j_1)$.

It remains to consider the case when $\ell\ge 2$,
in particular, this means that $\mu_1>\lambda_1$.
This time, we take
$$\bmu_0=(\mu_\ell\ge\cdots\ge\mu_1), \qquad
  \blambda_0=(\lambda_\ell\ge\cdots\ge\lambda_1),$$
and note that these are elements of $P^+(r_\ell(\blambda), \ell)$ and
that $\blambda_0\prec \bmu_0$.
We claim again that $\bmu_0$ covers $\blambda_0$.
Thus, let $\bnu_0=(\nu_\ell\ge\cdots\ge\nu_1) \in P^+(r_\ell(\blambda), \ell)$
be such that
$$\blambda_0\prec\bnu_0\preceq\bmu_0.$$
Then
$$\nu_s+\cdots+\nu_1 \ge \lambda_s+\cdots+\lambda_1, \quad 1 \le s\le \ell, \qquad
  \nu_\ell+\cdots+\nu_1 = \lambda_\ell+\cdots+\lambda_1. $$
Suppose that $\nu_\ell>\lambda_{\ell+1}$.
Then we get $\nu_\ell > \lambda_{\ell+1} \ge \lambda_{\ell}$, and
$$0<\nu_\ell-\lambda_\ell = (\lambda_{\ell-1}+\cdots+\lambda_1)-(\nu_{\ell-1}+\cdots+\nu_1)\le 0,$$
which is a contradiction. Thus we get $\nu_\ell \le \lambda_{\ell+1}$, and hence
$$\bnu=(\lambda_k \ge \cdots \ge \lambda_{\ell+1} \ge \nu_{\ell} \ge \cdots \ge \nu_{1})
  \in P^+(\lambda,k).$$
Also we see that $\blambda \prec \bnu\preceq\bmu$. Since $\bmu$ is a cover of
$\blambda$, this forces $\bnu =\bmu$, and hence $\bnu_{0}=\bmu_0$.
Thus we conclude that $\bmu_0$ covers $\blambda_0$. By induction on $k$,
$\bmu_0=\blambda_0(j_2,j_1)$ for some $1\le j_1<j_2\le \ell$.
We see that $\blambda \prec \blambda(j_2,j_1) \preceq \bmu$,
which forces $\bmu=\blambda(j_2,j_1)$. Thus we have proved the proposition.
\end{pf}

\subsection{}
%
\label{sl2}

{\em Proof of  Theorem~\ref{mainthm} (i).}\
Assume first that  $\lie g$ is $\lie{sl}_{2}$.
Let $\lambda \in P^{+}=\bz_{+}\omega_{1}$ be an arbitrary
dominant integral weight, and $k \ge 2$.
Let $\blambda=(\lambda_k\ge\cdots\ge\lambda_1) \in P^{+}(\lambda,k)$ and
$\bmu=(\mu_k\ge\cdots\ge\mu_1) \in P^{+}(\lambda,k)$ be such that
$\blambda \preceq \bmu$ (in $P^{+}(\lambda,k)/\!\sim$).
First we show that if $\blambda \prec \bmu$ (in $P^{+}(\lambda,k)/\!\sim$),
then
%
%
\begin{equation} \label{dim1}
\dim \bigotimes_{s=1}^k V(\lambda_s) <
\dim \bigotimes_{s=1}^k V(\mu_s), \quad \text{i.e.,} \quad
\prod_{s=1}^{k}(\lambda_{s}+1) < \prod_{s=1}^{k}(\mu_{s}+1).
\end{equation}
A standard argument shows  that
there exists a sequence $\blambda=\bnu_{0},\,\bnu_{1},\,\dots,\,\bnu_{p}=\bmu$
of elements of $P^{+}(\lambda,k)$ such that $\bnu_{q}$ covers $\bnu_{q-1}$
for each $1 \le q \le p$. It suffices to show \eqref{dim1}
in the case when $\bmu$ covers $\blambda$.
Then, by Proposition~\ref{prop:cover1}, there exists $1 \le j_{1} < j_{2} \le k$
with $\lambda_{j_1} < \lambda_{j_1+1}$, $\lambda_{j_2-1} < \lambda_{j_2}$, and
$\lambda_{j_1}+2 \le \lambda_{j_2}$
such that $\bmu=\blambda(j_{2},j_{1})$.
Thus the inequality \eqref{dim1} is equivalent to
\begin{equation*}
(\lambda_{j_{1}}+1)(\lambda_{j_{2}}+1) <
(\lambda_{j_{1}}+2)\lambda_{j_{2}}.
\end{equation*}
But this is obvious from the fact that
$\lambda_{j_1}+2 \le \lambda_{j_2}$.
Thus we have proved \eqref{dim1}.
Also, we have proved that
(under the assumption that $\blambda \preceq \bmu$) if
\begin{equation*}
\dim \bigotimes_{s=1}^k V(\lambda_s) =
\dim \bigotimes_{s=1}^k V(\mu_s),
\end{equation*}
then $\blambda = \bmu$ (in $P^{+}(\lambda,k)/\!\sim$).
The converse of this statement is obvious by Lemma~\ref{Sk-orbit}.
Thus we have proved Theorem~\ref{mainthm}\,(i) in the case of
$\lie{sl}_{2}$.

Assume next that $\lie g$ is an arbitrary finite-dimensional simple complex Lie algebra,
and $\lambda \in P^+$ is an arbitrary dominant integral weight.
Let $\blambda=(\lambda_{1},\,\dots,\,\lambda_{k}),\,
\bmu=(\mu_{1},\,\dots,\,\mu_{k}) \in P^+(\lambda,k)$ be such that
$\blambda \preceq \bmu$ (in $P^+(\lambda,k)/\!\sim$).
Using the Weyl dimension formula we see that
$$\dim\bigotimes_{s=1}^k V(\lambda_s)=
 \prod_{s=1}^k\prod_{\alpha\in R^+}
 \frac{(\lambda_s+\rho)(h_\alpha)}{\rho(h_\alpha)}, \qquad
 \dim\bigotimes_{s=1}^k V(\mu_s)=
 \prod_{s=1}^k\prod_{\alpha\in R^+}
 \frac{(\mu_s+\rho)(h_\alpha)}{\rho(h_\alpha)}.$$
So, in order to prove that
$\dim\bigotimes_{s=1}^k V(\lambda_s) \le
 \dim\bigotimes_{s=1}^k V(\mu_s)$, it suffices to show that
\begin{equation*}
\prod_{s=1}^{k} (\lambda_s+\rho)(h_\alpha) \le
\prod_{s=1}^{k} (\mu_s+\rho)(h_\alpha)
\qquad \text{for each $\alpha \in R^{+}$}.
\end{equation*}
For each $\alpha \in R^{+}$ and $1 \le s \le k$, we set
\begin{equation*}
\lambda_{s}^{(\alpha)}=
 (\lambda_s + \rho) (h_\alpha) - 1, \qquad
\mu_{s}^{(\alpha)}=
 (\mu_s + \rho) (h_\alpha) - 1,
\end{equation*}
\begin{equation*}
\lambda^{(\alpha)}= \lambda(h_\alpha) + k(\rho(h_\alpha) - 1) =
\sum_{s=1}^{k} \lambda_{s}^{(\alpha)} =
\sum_{s=1}^{k} \mu_{s}^{(\alpha)}.
\end{equation*}
Then we see that the elements
\begin{equation*}
\blambda^{(\alpha)}=
(\lambda_{1}^{(\alpha)},\,\dots,\,\lambda_{k}^{(\alpha)}), \qquad
\mu^{(\alpha)}=
(\mu_{1}^{(\alpha)},\,\dots,\,\mu_{k}^{(\alpha)})
\end{equation*}
are elements of $P^{+}( \lambda^{(\alpha)},k)$ for $\lie{sl}_2$
(or rather $\lie{sl}_2(\alpha)$) satisfying
$\blambda^{(\alpha)} \preceq \bmu^{(\alpha)}$
(in $P^{+}( \lambda^{(\alpha)},k)/\!\sim$).
Hence, by the argument for $\lie{sl}_{2}$ above, we obtain
\begin{equation*}
\prod_{s=1}^{k} (\lambda_s+\rho)(h_\alpha) =
\prod_{s=1}^{k} (\lambda_s^{(\alpha)}+1) \le
\prod_{s=1}^{k} (\mu_s^{(\alpha)}+1) =
\prod_{s=1}^{k} (\mu_s+\rho)(h_\alpha),
\end{equation*}
as desired. Also, we deduce that
(under the assumption that $\blambda \preceq \bmu$)
\begin{align*}
&
\dim\bigotimes_{s=1}^k V(\lambda_s) =
 \dim\bigotimes_{s=1}^k V(\mu_s) \\
& \qquad \iff \quad
\prod_{s=1}^{k} (\lambda_s^{(\alpha)}+1) =
\prod_{s=1}^{k} (\mu_s^{(\alpha)}+1) \quad
\text{for all $\alpha \in R^{+}$} \\[1.5mm]
& \qquad \iff \quad
\blambda^{(\alpha)} = \bmu^{(\alpha)} \quad
\text{for all $\alpha \in R^{+}$ (by the argument for $\lie{sl}_{2}$ above)} \\
& \qquad \iff \quad
\blambda = \bmu.
\end{align*}
Thus we have proved Theorem~\ref{mainthm}\,(i)  \qed

\section{Proof of \texorpdfstring{Theorem~\ref{mainthm}\,{\rm (ii)}}{Theorem 1 (ii)}} \label{section4} 

\subsection{} 
%
As in Section~\ref{section3}, we regard elements of
$P^+(N\omega_i,k)$ as partitions of $N$.
Also, we deduce from Proposition~\ref{prop:cover1} that
Theorem \ref{mainthm}\,(ii) is proved
once we establish the following proposition.
%
%
\begin{prop}\label{min}
Suppose that $r,s\in\bz_+$ and assume that $s\ge r+1$.
Let $i\in I$ be such that $\omega_i$ is minuscule.
Then, for all $\mu \in P^+$, we have
\begin{equation*}
\dim \Hom_{\lie g}(V(\mu),\,V(s\omega_i) \otimes V(r\omega_i)) \le
\dim\Hom_{\lie g}(V(\mu),\,V((s-1)\omega_i)\otimes V((r+1)\omega_i)).
\end{equation*}
\end{prop}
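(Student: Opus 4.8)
The plan is to pass to the Littelmann path model and exhibit an explicit injection between the two sets that count the multiplicities. Fix the minuscule weight $\omega_i$, write $\mathbb{B}(N\omega_i)$ for the set of Littelmann paths of shape $N\omega_i$, let $\pi_{\omega_i}$ denote the straight-line (highest weight) path of shape $\omega_i$, and write $*$ for concatenation. By Littelmann's tensor product rule, for a dominant $\Lambda$ the multiplicity $\dim\Hom_{\lie g}(V(\mu),\,V(\Lambda)\otimes V(N\omega_i))$ equals the number of $\eta\in\mathbb{B}(N\omega_i)$ such that $\pi_\Lambda * \eta$ stays dominant and ends at $\mu$, i.e. $\varepsilon_j(\eta)\le\langle\Lambda,h_j\rangle$ for all $j\in I$ and $\wt(\eta)=\mu-\Lambda$. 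Taking $\Lambda=s\omega_i$ and $\Lambda=(s-1)\omega_i$ respectively, the proposition reduces to the inequality $|A|\le|A'|$, where
\[
A=\{\eta\in\mathbb{B}(r\omega_i):\ \varepsilon_j(\eta)=0\ (j\ne i),\ \varepsilon_i(\eta)\le s,\ \wt(\eta)=\mu-s\omega_i\},
\]
\[
A'=\{\eta'\in\mathbb{B}((r+1)\omega_i):\ \varepsilon_j(\eta')=0\ (j\ne i),\ \varepsilon_i(\eta')\le s-1,\ \wt(\eta')=\mu-(s-1)\omega_i\}.
\]

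Since $\omega_i$ is minuscule, I would use the standard description of $\mathbb{B}(N\omega_i)$: its elements are the weakly decreasing chains $\tau_1\ge\cdots\ge\tau_N$ in the Weyl-orbit poset $W\omega_i$ (ordered by the dominance order), realized as concatenations $\pi_{\tau_1}*\cdots*\pi_{\tau_N}$ of straight-line paths. The key point is that $\omega_i$ is the maximum of this poset. The map I propose is
\[
\Phi\colon A\to\mathbb{B}((r+1)\omega_i),\qquad \Phi(\eta)=\pi_{\omega_i}*\eta.
\]
Because $\omega_i\ge\tau_1$, prepending $\pi_{\omega_i}$ to the chain of $\eta$ again produces a chain, so $\Phi(\eta)\in\mathbb{B}((r+1)\omega_i)$; this is exactly where minusculeness is used. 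It remains to check that $\Phi$ lands in $A'$ and is injective. Injectivity is immediate, as concatenation with a fixed first factor is injective. For the weight, $\wt(\Phi(\eta))=\omega_i+\wt(\eta)=\mu-(s-1)\omega_i$. For the $\varepsilon$-conditions, the concatenation rule $\varepsilon_j(\pi_{\omega_i}*\eta)=\varepsilon_j(\pi_{\omega_i})+\max\{0,\,\varepsilon_j(\eta)-\varphi_j(\pi_{\omega_i})\}$, together with $\varepsilon_j(\pi_{\omega_i})=0$ and $\varphi_j(\pi_{\omega_i})=\langle\omega_i,h_j\rangle=\delta_{ij}$, gives $\varepsilon_j(\Phi(\eta))=\varepsilon_j(\eta)=0$ for $j\ne i$, and $\varepsilon_i(\Phi(\eta))=\max\{0,\,\varepsilon_i(\eta)-1\}\le\varepsilon_i(\eta)$.

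The remaining inequality $\varepsilon_i(\Phi(\eta))\le s-1$ then follows from $\varepsilon_i(\eta)\le r$, which holds for every $\eta\in\mathbb{B}(r\omega_i)$ (the largest $i$-string length occurring in $\mathbb{B}(r\omega_i)$ is $r$), combined with the hypothesis $s\ge r+1$, so that $\varepsilon_i(\Phi(\eta))\le\varepsilon_i(\eta)\le r\le s-1$. This is the only place the assumption $s\ge r+1$ enters, and it simultaneously makes the constraint $\varepsilon_i(\eta)\le s$ in the definition of $A$ automatic. Hence $\Phi(A)\subseteq A'$, which yields $|A|\le|A'|$ and the proposition. I expect the main obstacle to be the well-definedness of $\Phi$, namely verifying that the concatenated path again lies in the irreducible crystal $\mathbb{B}((r+1)\omega_i)$ rather than in some other component of the decomposition of $V(\omega_i)\otimes V(r\omega_i)$; the minuscule chain description reduces this to the observation that $\omega_i$ is the top of the orbit poset, and all other verifications are routine applications of the path concatenation rules.
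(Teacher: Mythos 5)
Your overall strategy --- realize the multiplicities by Littelmann paths and inject the relevant subset of $\BB(r\omega_i)$ into the corresponding subset of $\BB((r+1)\omega_i)$ by inserting one extra straight segment in the direction $\omega_i$, then use minusculeness to bound $\varepsilon_i(\eta)\le r$ and invoke $s\ge r+1$ --- is exactly the paper's. But there is a genuine error in the one step you yourself flagged as the main obstacle, the well-definedness of $\Phi$: you insert the new segment at the wrong end of the path. With the conventions you state (the Littlewood--Richardson rule counts $\eta$ such that $\pi_\Lambda*\eta$ stays dominant, and $\varepsilon_j(\pi_1*\pi_2)=\varepsilon_j(\pi_1)+\max\{0,\varepsilon_j(\pi_2)-\varphi_j(\pi_1)\}$, so the first-named factor is traversed first), the LS paths of shape $N\omega_i$ are the concatenations $\pi_{\tau_1}*\cdots*\pi_{\tau_N}$ whose directions weakly \emph{increase} in dominance order along the path: an LS path begins with its most antidominant segment and ends with its most dominant one. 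In the order $\ge$ used in the paper and in \cite{L1,L2}, the dominant weight $(r+1)\omega_i$ is the \emph{minimum} of its Weyl orbit (the paper says this explicitly), not the maximum. Consequently $\pi_{\omega_i}$ can always be appended at the end of $\eta$, but prepending it generally destroys the chain condition, and $\pi_{\omega_i}*\eta$ is neither an LS path of shape $(r+1)\omega_i$ nor an element of the connected component of $\pi_{\omega_i}*\pi_{r\omega_i}$ inside $\BB(\omega_i)*\BB(r\omega_i)$. A concrete failure: take $\lie g=\lie{sl}_2$, $r=1$, $s=2$, $\mu=\omega_1$. Then $A=\{\pi_{-\omega_1}\}$, and your own concatenation formula gives $\varepsilon_1(\pi_{\omega_1}*\pi_{-\omega_1})=0+\max\{0,1-1\}=0$ with weight $0$; but the weight-$0$ element of $\BB(2\omega_1)$ has $\varepsilon_1=1$, so $\Phi(\pi_{-\omega_1})$ does not lie in $\BB(2\omega_1)$ at all (it spans the trivial component of $\BB(\omega_1)*\BB(\omega_1)$), hence not in $A'$.

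The repair is precisely the paper's map: attach the $\omega_i$-segment at the \emph{end}, $\eta\mapsto\eta*\pi_{\omega_i}$, which the paper writes as the reparametrized path $\pi'(t)=\pi(\tfrac{r+1}{r}t)$ for $0\le t\le\tfrac{r}{r+1}$ followed by a straight segment in direction $(r+1)\omega_i$. Well-definedness is then immediate from the chain description, since the last direction may always be followed by the orbit-minimal element $(r+1)\omega_i$; moreover $\varepsilon_j(\eta*\pi_{\omega_i})=\max\{\varepsilon_j(\eta),\,-\langle \eta(1),h_j\rangle\}=\varepsilon_j(\eta)$ for every $j$, the endpoint still gains $\omega_i$, and injectivity is clear. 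After this correction your remaining estimates --- $\varepsilon_j(\eta)=0$ for $j\ne i$, and $\varepsilon_i(\eta)\le r\le s-1$, which is exactly where minusculeness and the hypothesis $s\ge r+1$ enter --- go through verbatim and coincide with the paper's computation $(\pi'(a_q'))(h_i)\ge -r$. So the error is localized and fixable, but as written the injection is not well defined.
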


The proposition is established in the rest of
the section using the Littelmann path model.

\subsection{}
%
We recall the essential definitions and results from \cite{L1, L2}.

\begin{defn}
\begin{enumerit}
\item[(i)]
Let $\lambda\in P^+$ and  $\mu,\,\nu \in W\lambda$. We say that
$\mu \ge \nu$ if there exists a sequence
$\mu=\xi_{0},\,\xi_{1},\,\dots,\,\xi_{m}=\nu$ of elements
in $W\lambda$ and elements
$\beta_{1},\,\dots,\,\beta_{m} \in R^{+}$ of positive roots
such that $$\xi_{p}=\bos_{\beta_{p}}(\xi_{p-1}),\ \
\xi_{p-1}(h_{\beta_p}) < 0,\ \  1\le p\le m.$$
 Moreover, in this case, we let $\dist(\mu,\nu)$
be the maximal length $m$ of all  such possible sequences.

\item[(ii)]
For $\mu,\,\nu \in W\lambda$ with $\mu > \nu$ and
a rational number $0 < a < 1$, we define
an $a$-chain for $(\mu,\nu)$ as  a sequence $\mu=\xi_{0} > \xi_{1} >
\dots > \xi_{m}=\nu$ of elements in $W\lambda$
such that $$\dist(\xi_{p-1},\xi_{p})=1,\ \  \xi_{p}=\bos_{\beta_{p}}(\xi_{p-1}),\ \
a\xi_{p-1}(h_{\beta_{p}}) \in \bz_{< 0},$$
for all $p=1,\,2,\,\dots,\,m$.

\item[(iii)]
An LS path of shape $\lambda$ is
a pair $(\ud{\nu}\,;\,\ud{a})$ consisting of a  sequence
$\ud{\nu}=(\nu_{1}>\,\nu_{2}>\dots > \nu_{\ell})$ (for some $\ell\ge 1$) of elements
in $W\lambda$ and a sequence
$\ud{a}=(0=a_{0} < a_{1} < \cdots < a_{\ell}=1)$
of rational numbers satisfying the condition that
there exists an $a_{p}$-chain for $(\nu_{p},\,\nu_{p+1})$
for $p=1,\,2,\,\dots,\,\ell-1$.
We denote by $\BB(\lambda)$ the set of all LS paths of shape $\lambda$.\end{enumerit}
\end{defn}

\subsection{}

Set $\lie{h}^{\ast}_{\br}=\sum_{i=1}^{n}\br\omega_{i}$, where $\br$ is the set of real numbers.
Given an LS path
$\pi=(\ud{\nu}\,;\,\ud{a})=(\nu_{1},\,\nu_{2},\,\dots,\,\nu_{\ell}\,;\,
a_{0},\,a_{1},\,\dots,\,a_{\ell})$ of shape $\lambda$, define a  piecewise linear,
continuous map $\pi:[0,1] \rightarrow \lie{h}^{\ast}_{\br}$ by:
\begin{equation} \label{eq:path}
\pi(t)=\sum_{p=1}^{q-1}
(a_{p}-a_{p-1})\nu_{p}+
(t-a_{q-1})\nu_{q} \quad \text{for \ }
a_{q-1} \le t \le a_{q}, \  1 \le q\le \ell.
\end{equation}
Clearly distinct LS paths give rise to distinct
piece-wise  linear functions
with values in {$\lie{h}^{\ast}_{\br}$} and we shall make
this identification freely in what follows.

Given $\xi \in P^+$, we say that an LS path $\pi$ of shape $\lambda$ is
$\xi$-dominant if $(\xi+\pi(t))(h_{i}) \ge 0$ for all $i \in I$ and $t \in [0,1]$.
Note that ${\pi=
(\nu_{1},\,\nu_{2},\,\dots,\,\nu_{\ell}\,;\,
a_{0},\,a_{1},\,\dots,\,a_{\ell})}$ is $\xi$-dominant
if and only if $(\xi+\pi(a_{p}))(h_{i}) \ge 0$
for all $i \in I$ and $0 \le p \le \ell$.

For {$\lambda,\,\xi,\,\mu \in P^+$}, set
\begin{equation}
\BB(\lambda)^{\mu}_{\dom{\xi}}=
 \bigl\{\pi\in\BB(\lambda):
 \text{$\pi$ is $\xi$-dominant, and $\xi+\pi(1)=\mu$}
 \bigr\}.
\end{equation}

The following was proved in  \cite{L1}.

\begin{thm*} \label{thm:LR}
For $\lambda,\xi,\mu \in P^+$, we have

\begin{equation}
\dim\Hom_{\lie g}(V(\mu),\,V(\xi) \otimes V(\lambda))=
 \#\BB(\lambda)^{\mu}_{\dom{\xi}}.
\end{equation}
\end{thm*}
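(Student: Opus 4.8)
The plan is to obtain this multiplicity formula as the specialization to the present setting of Littelmann's path-model theory from \cite{L1,L2}, whose engine is a family of \emph{root operators} $e_{i},f_{i}$ ($i\in I$) acting on the piecewise-linear paths $\pi\colon[0,1]\to\lie{h}^{\ast}_{\br}$ attached to LS paths via \eqref{eq:path}. For a fixed $i$ one studies the real function $t\mapsto\pi(t)(h_{i})$ with minimum value $m$; when $m\le-1$ one reflects a suitable initial segment of $\pi$ across an integer level to produce $e_{i}\pi$, which has endpoint weight $\pi(1)+\alpha_{i}$, and otherwise sets $e_{i}\pi=0$, with $f_{i}$ defined dually so that it lowers the endpoint weight by $\alpha_{i}$. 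The first and most substantial step I would carry out is Littelmann's stability theorem: the set $\BB(\lambda)$ of LS paths of shape $\lambda$ defined above is stable under all $e_{i},f_{i}$ (up to adjoining $0$), and, with the straight path $t\mapsto t\lambda$ as its unique source, forms a single connected crystal graph. The content here is that reflecting a segment of an LS path again yields a sequence $\ud{\nu}$ and rationals $\ud{a}$ satisfying the $a$-chain conditions of the definition, which is exactly where the integrality built into the notion of LS path is used.

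Granting this, the second step is the character identity $\sum_{\pi\in\BB(\lambda)}e^{\pi(1)}=\ch V(\lambda)$. The operators $e_{i},f_{i}$ organize $\BB(\lambda)$ into unbroken $\lie{sl}_{2}(\alpha_{i})$-strings for each $i$, so the multiset of endpoints $\{\pi(1)\}$ is $W$-stable and the left-hand side is a $W$-invariant formal character. Since $\BB(\lambda)$ is connected with unique source $t\mapsto t\lambda$ of weight $\lambda$, the standard comparison of a $W$-invariant character carrying the correct highest weight with the Weyl character forces equality.

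The third step is the passage to the tensor product through \emph{concatenation}: for paths $\sigma,\pi$ set $(\sigma\ast\pi)(t)=\sigma(2t)$ on $[0,\tfrac12]$ and $\sigma(1)+\pi(2t-1)$ on $[\tfrac12,1]$. The root operators act on $\sigma\ast\pi$ by the tensor-product rule, so $\BB(\xi)\ast\BB(\lambda):=\{\sigma\ast\pi:\sigma\in\BB(\xi),\,\pi\in\BB(\lambda)\}$ is again stable with endpoint character $\ch V(\xi)\,\ch V(\lambda)=\ch\bigl(V(\xi)\otimes V(\lambda)\bigr)$. A concatenation is a source (highest-weight) element precisely when every $e_{i}$ annihilates it; applying the tensor-product rule to $\pi_{\xi}\ast\pi$, where $\pi_{\xi}\colon t\mapsto t\xi$ is the source of $\BB(\xi)$ and $\pi\in\BB(\lambda)$, one checks that this holds if and only if $\xi+\pi(t)$ is dominant for all $t$, i.e.\ $\pi$ is $\xi$-dominant (a finite test, by the remark that it suffices to verify dominance at the break points $a_{p}$). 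Since each source of weight $\mu$ generates a connected component isomorphic to $\BB(\mu)$ and hence, by the character identity of the second step, contributes one summand $V(\mu)$ to $V(\xi)\otimes V(\lambda)$, counting the $\xi$-dominant paths in $\BB(\lambda)$ with $\xi+\pi(1)=\mu$ gives
\[
\dim\Hom_{\lie g}\bigl(V(\mu),\,V(\xi)\otimes V(\lambda)\bigr)=\#\BB(\lambda)^{\mu}_{\dom{\xi}},
\]
as claimed.

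The main obstacle is the first step: the stability of $\BB(\lambda)$ under the root operators together with the connectedness that underlies the crystal structure. Once these are in place, the character formula, the concatenation rule, and the identification of sources with $\xi$-dominant paths are all essentially formal. Stability is delicate because a single application of $e_{i}$ or $f_{i}$ reshuffles both the sequence $\ud{\nu}$ and the parameters $\ud{a}$, and one must verify that the defining chain conditions persist; I would handle this exactly as in \cite{L1,L2}, by reducing each operator to a rank-one computation inside $\lie{sl}_{2}(\alpha_{i})$ and tracking how the reflection interacts with the chain structure.
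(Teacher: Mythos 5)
The paper does not prove this theorem at all; it is quoted from Littelmann \cite{L1}, and your sketch is a faithful outline of exactly that proof---root operators on paths, stability and connectedness of $\BB(\lambda)$, the character identity, concatenation realizing the tensor product, and the identification of the sources $\pi_{\xi}\ast\pi$ with the $\xi$-dominant $\pi$---with the genuinely hard step (stability of the LS-path set under $e_{i},f_{i}$) correctly isolated and deferred to \cite{L1,L2}. The only soft spot is your second step: $W$-invariance of the endpoint character together with a unique source of weight $\lambda$ does not by itself force equality with $\ch V(\lambda)$ (one must still rule out extra irreducible constituents, which Littelmann does via his decomposition/independence theorems rather than a bare highest-weight comparison), but as you are reconstructing a result the paper itself only cites, this compression is acceptable.
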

%

\subsection{}

The first step in the proof of Proposition \ref{min} is
to describe the set $\BB(N\omega_i)$ explicitly
when $\omega_i$ is minuscule.
\begin{lem} \label{lem:LS}
Let $i\in I$ be such that $\omega_{i}$ is minuscule.
Consider a pair $(\ud{\nu}\,;\,\ud{a})$,
where $\ud{\nu}=(\nu_{1}>\,\nu_{2}>\dots > \nu_{\ell})$ is
a sequence of elements in $W (N\omega_i)$ and
$\ud{a}=(0=a_{0} < a_{1} < \cdots < a_{\ell}=1)$ is
a sequence of rational numbers (for some $\ell\ge 1$).
Then we have:
$$
 (\ud{\nu}\,;\,\ud{a}) \in \BB(N\omega_i) \iff Na_p \in \bz_+ \quad
 \text{\rm {for all $0 \le p \le \ell$} }.
$$
\end{lem}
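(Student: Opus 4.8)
The goal is to characterize exactly which pairs $(\ud{\nu}\,;\,\ud{a})$ of the given form are genuine LS paths of shape $N\omega_i$. The LS-path conditions in the definition involve the existence of $a_p$-chains linking consecutive elements $\nu_p > \nu_{p+1}$ of $W(N\omega_i)$, and these chains in turn are controlled by the values $\xi_{p-1}(h_{\beta_p})$ along the chain. The central simplification available to us is that $\omega_i$ is minuscule, which by definition means $\omega_i(h_\alpha) \in \{0,1\}$ for all $\alpha \in R^+$; consequently every element of $W\omega_i$ pairs with every coroot $h_\alpha$ to give a value in $\{-1,0,1\}$, and every element of $W(N\omega_i) = N \cdot W\omega_i$ pairs with $h_\alpha$ to give a value in $\{-N,0,N\}$. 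This is the quantization phenomenon that should force the $Na_p \in \bz_+$ condition.

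The first step I would carry out is to analyze the $\dist$ function and the $a$-chains on $W(N\omega_i)$ using minusculeness. For $\mu, \nu \in W(N\omega_i)$ with $\dist(\mu,\nu) = 1$, we have $\nu = \bos_\beta(\mu)$ with $\mu(h_\beta) < 0$, and since $\mu(h_\beta) \in \{-N,0,N\}$ we must have $\mu(h_\beta) = -N$. The defining condition for an $a$-chain then requires $a\,\xi_{p-1}(h_{\beta_p}) \in \bz_{<0}$; with $\xi_{p-1}(h_{\beta_p}) = -N$ this reads $-aN \in \bz_{<0}$, i.e. $aN \in \bz_{>0}$. So the existence of an $a_p$-chain for a genuine covering pair automatically pins $a_p N$ to be a positive integer. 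I would also need to verify the converse direction at the level of single steps: that whenever $a_p N \in \bz_+$, one can actually build the requisite $a_p$-chain between consecutive $\nu_p, \nu_{p+1}$. Here I expect to use the Littelmann/Deodhar machinery (the "chain condition" characterizing the Bruhat-type order on $W\lambda$) together with the fact that in the minuscule case all relevant coroot pairings are $\pm N$, so that the integrality $a_p N \in \bz$ is precisely what is needed to split a distance-$m$ relation into a chain with the correct rational weights.

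The forward implication ($\Leftarrow$) requires showing that if $Na_p \in \bz_+$ for all $p$, then for each consecutive pair $(\nu_p, \nu_{p+1})$ an $a_p$-chain exists. The point is that by minusculeness each covering step in the order on $W(N\omega_i)$ contributes a coroot pairing of exactly $-N$, so the integrality of $Na_p$ guarantees $a_p \nu_p(h_{\beta}) \in \bz_{<0}$ along the whole chain; I would invoke the standard existence result for $a$-chains (the LS-path version of the "lifting" in Bruhat order) in this specialized setting. For the reverse implication ($\Rightarrow$), given that $(\ud{\nu}\,;\,\ud{a})$ is an LS path, the very definition supplies an $a_p$-chain for each $p$, and reading off the condition $a_p \xi_{p-1}(h_{\beta_p}) \in \bz_{<0}$ with the forced value $\xi_{p-1}(h_{\beta_p}) = -N$ yields $a_p N \in \bz_{>0}$, hence $a_p N \in \bz_+$; the endpoint values $a_0 N = 0$ and $a_\ell N = N$ are integers trivially.

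**Main obstacle.** The easy direction is extracting $Na_p \in \bz_+$ from an existing LS path, since that is almost immediate once one observes all coroot pairings on $W(N\omega_i)$ lie in $\{-N,0,N\}$. The harder direction is the converse: producing an explicit $a_p$-chain between $\nu_p$ and $\nu_{p+1}$ whenever $Na_p$ is a positive integer. This requires knowing that the order on $W(N\omega_i)$ is "saturated" in the right way — that every distance relation decomposes into length-one steps each carrying coroot value $-N$ — which is where the structure theory of minuscule weights (equivalently, the fact that $W\omega_i$ is a minuscule poset whose covering relations correspond to single simple reflections) must be brought to bear. I would expect the bulk of the real work, and any subtlety about whether intermediate elements $\xi_q$ stay inside $W(N\omega_i)$, to live in this existence step.
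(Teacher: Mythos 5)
Your proposal is correct and follows essentially the same route as the paper: minusculeness forces every coroot pairing on $W(N\omega_i)$ into $\{0,\pm N\}$, so in the forward direction the chain condition $a_p\,\xi_{0}(h_{\beta})\in\bz_{<0}$ applied to the first step of any $a_p$-chain immediately yields $Na_p\in\bz_{>0}$, while in the converse direction any relation $\nu_{p}>\nu_{p+1}$ refines into distance-one steps whose pairings lie in $N\bz_{<0}$, making the integrality check $a_p\xi_{q-1}(h_{\beta_q})=(Na_p)(\bow\omega_i)(h_{\beta_q})\in\bz_{<0}$ trivial. The one place you overestimate the difficulty is the ``saturation'' step you flag as the main obstacle: no structure theory of minuscule posets is needed there, since a maximal-length sequence realizing $\nu_{p}>\nu_{p+1}$ (which exists by the definition of $\dist$ as a maximum) automatically has all consecutive distances equal to one and stays inside the $W$-orbit, and indeed the paper's proof of that direction does not invoke minusculeness at all --- only the fact that elements of $W(N\omega_i)$ pair with coroots in $N\bz$.
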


\begin{proof}
Suppose first that $(\ud{\nu},\ud{a})$ is such that
$Na_p\in\bz_+$ for all $0\le p\le \ell$,
in which case we must prove that for $1\le p \le {\ell-1}$,
there exists an $a_p$-chain {for} $(\nu_{p},\,\nu_{p+1})$.
Since $\nu_{p} > \nu_{p+1}$,
there exists a  sequence ${\nu_{p}}=\xi_{0} > \xi_{1} >
\dots > \xi_{m}={\nu_{p+1}}$ of elements in $W(N\omega_{i})$
such that
$$\dist(\xi_{{q-1}},\xi_{{q}})=1,\ \
  \xi_{{q}}=\bos_{\beta_{{q}}}(\xi_{{q-1}}),\ \
  \xi_{{q}-1}(h_{\beta_{q}}) < 0,\ \  1 \le {q} \le m.$$
Writing $\xi_{p-1}=\bow(N\omega_{i})$ with some $\bow \in W$, we get
\begin{equation*}
\xi_{p-1}(h_{\beta_{p}})=N\underbrace{(\bow\omega_{i})(h_{\beta_{p}})}_{\in \bz} \in N\bz_{<0},
\end{equation*}
which gives
$$a_p\xi_{p-1}(h_{\beta_{p}})\in\bz,$$
as required.

We now suppose that
 $(\ud{\nu},\ud{a})=(\nu_{1},\,\nu_{2},\,\dots,\,\nu_{\ell}\,;\,
a_{0},\,a_{1},\,\dots,\,a_{\ell}) \in \BB(N\omega_{i})$.
Observe that, if $\nu\in W(N\omega_i)$, then
since $\omega_i$ is minuscule,
we have that $\nu(h_\beta)\in\{0,\pm N\}$ for all $\beta\in R$.
We have to prove that  $Na_p\in\bz_+$ for $1\le p\le \ell$.
The assertion is obvious when $p=0$ or $\ell$. If $1 \le p \le \ell-1$,
{choose} an $a_p$-chain, $\nu_{p}=\xi_{0} > \xi_{1} > \dots > \xi_{m}=\nu_{p+1}$.
{Then} there exists a positive root $\beta$ such that
$$
 \xi_{1}=\bos_{\beta}(\xi_{0}), \ \
 a_{p}\xi_{0}(h_{\beta}) {\in \bz_{< 0}}.
$$
In particular, {we have $\xi_{0}(h_{\beta}) < 0$, which implies that $\xi_{0}(h_{\beta})=-N$.
Thus we get $Na_p\in\bz_+$ as required.}

\end{proof}

\subsection{}
%
The following observations are  trivial but useful:
{%
%
%
\begin{equation} \label{eq:r}
\begin{cases}
\nu \in W(r\omega_i) \implies \nu'= \frac{r+1}{r}\nu \in W((r+1)\omega_i), \\[1.5mm]
\nu,\,\gamma \in W(r\omega_i),\,\nu > \gamma \implies
  \nu'> \gamma', \\[1.5mm]
0\le a<1\implies \ 0 \le a'=\frac{ra}{r+1}<1, \quad (r+1)a'\in\bz_+.
\end{cases}
\end{equation}
}

Given $(\ud{\nu};\ud{a})=
(\nu_1,\,\dots,\,\nu_\ell\,;\,a_0,\,\dots,\,a_\ell)$, set
$$(\ud{\nu}'\,;\,\ud{a}')=
\begin{cases}
\left(\nu_1',\,\dots,\,\nu_\ell',\,(r+1)\omega_i\,;\,
  a_0',\,\dots,\,a_\ell',\,1\right)
& { \text{if $\nu_\ell \ne r\omega_i$} }, \\[1.5mm]
\left(\nu_1',\,\dots,\,\nu_{\ell-1}',\,(r+1)\omega_i\,;\,
  a_0',\,\dots,\,a_{\ell-1}',\,1\right)
& { \text{if $\nu_\ell=r\omega_i$} }.
\end{cases}
$$


We now prove  Proposition~\ref{min} by showing that
for each $\mu \in P^{+}$, the assignment
$$ (\ud{\nu};\ud{a})\to (\ud{\nu'};\ud{a'})$$
gives an injective map
$$\iota_r:
 \BB(r\omega_{i})_{\dom{s\omega_{i}}}^{\mu} \hookrightarrow
 \BB((r+1)\omega_{i})_{\dom{(s-1)\omega_{i}}}^{\mu}.
$$
It is immediate from Lemma~\ref{lem:LS},{along with \eqref{eq:r},} and
the fact that $(r+1)\omega_{i}$ is the minimum element in $W((r+1)\omega_{i})$
(with respect to the ordering $>$ (see also \cite[Remark 4.2]{L2})) that
$$(\ud{\nu}\,;\,\ud{a}) \in {\BB(r\omega_{i})}
  \implies
  (\ud{\nu'}\,;\,\ud{a'}) \in {\BB((r+1)\omega_{i})}.
$$
Let $\pi$ and $\pi'$ be the piecewise linear paths associated to
$(\ud{\nu}\,;\,\ud{a})$ and $(\ud{\nu'}\,;\,\ud{a'})$, respectively
(see \eqref{eq:path}). We have
\begin{equation} \label{eq:pi2}
\pi'(t)=
\begin{cases}
 \pi\left(\dfrac{r+1}{r}t\right) & \text{for } 0 \le t \le \dfrac{r}{r+1} \\[5mm]
 \pi(1)+\left(t-\dfrac{r}{r+1}\right)(r+1)\omega_{i}
 &  \text{for } \dfrac{r}{r+1} \le t \le 1.
\end{cases}
\end{equation}
This proves immediately that
$$s\omega_{i}+\pi(1)={\mu} \implies
(s-1)\omega_{i}+\pi'(1)=(s-1)\omega_{i}+\pi(1)+\omega_{i}=
s\omega_{i}+\pi(1)={\mu}.$$
Moreover, since
$$t \in \left[0,\,\frac{r}{r+1}\right] \iff
\frac{r+1}{r} t\in [0,\,1],$$
it follows also that
{%
if $\eta$ corresponds to an element of $\BB(s\omega_{i})$
different from $(\ud{\nu}\,;\,\ud{a})$, then
there exists $t \in \left[0,\frac{r}{r+1}\right]$ such that
$$\pi'(t) \ne \eta'(t).$$
}%
Thus we have proved that $\iota_r$ is injective.

It  remains to show that $\pi'$ is $(s-1)\omega_{i}$-dominant.
Let $j \in I$. If $j \ne i$, then we have
\begin{equation*}
((s-1)\omega_{i}+\pi'(t))(h_{j})=(\pi'(t))(h_{j}).
\end{equation*}
Since $\pi$ is $s\omega_{i}$-dominant, we have
\begin{equation*}
0 \le (s\omega_{i}+\pi(t))(h_{j})=(\pi(t))(h_{j})
  \quad \text{for all $0 \le t \le 1$}.
\end{equation*}
Thus, by \eqref{eq:pi2}, we see that
$(\pi'(t))(h_{j}) \ge 0$  for all $0 \le t \le \frac{r}{r+1}$.
Also, for $\frac{r}{r+1} \le t \le 1$, we have
\begin{equation*}
(\pi'(t))(h_{j}) = (\pi(1))(h_{j}) +
 \left(t-\dfrac{r}{r+1}\right)(r+1)\underbrace{\omega_{i}(h_{j})}_{=0} =
 (\pi(1))(h_{j}) \ge 0.
\end{equation*}
Thus we have shown that
if $j \ne i$, then $((s-1)\omega_{i}+\pi'(t))(h_{j}) \ge 0$ for all
$0 \le t \le 1$.

Next, assume that $j=i$.
We see from \eqref{eq:pi2} that
the function $((s-1)\omega_{i}+\pi'(t))(h_{i})$ is strictly
increasing on $\left[\frac{r}{r+1},\,1\right]$.
Thus it suffices to show that
%
%
\begin{equation} \label{eq:i}
((s-1)\omega_{i}+\pi'(t))(h_{i}) \ge 0
\quad \text{for all } 0 \le t \le \frac{r}{r+1}.
\end{equation}
Let $0 \le q \le \ell$. We have
\begin{align*}
(\pi'(a_{q}'))(h_{i})=
\sum_{p=1}^{q} (a_{p}'-a_{p-1}')\nu_{p}'(h_{i}).
\end{align*}
Here, we note that $\nu_{p}'(h_{i}) \in \bigl\{0,\,\pm(r+1)\bigr\}$ since
$\omega_{i}$ is assumed to be minuscule. Hence,
\begin{align*}
(\pi'(a_{q}'))(h_{i}) & =
\sum_{p=1}^{q} (\underbrace{a_{p}'-a_{p-1}'}_{> 0})
\nu_{p}'(h_{i}) \ge
-\sum_{p=1}^{q} (a_{p}'-a_{p-1}')(r+1) \\[2mm]
& = -(r+1)a_{q}'=-ra_{q} \ge -r.
\end{align*}
Thus, for every $0 \le q \le \ell$,
\begin{equation*}
\left((s-1)\omega_{i}+\pi'(a_{q}')\right)(h_{i})
\ge (s-1)-r = s-(r+1) \ge 0 \quad \text{by assumption},
\end{equation*}
which implies \eqref{eq:i}.
Thus we have proved the proposition. \qed


\section{ The poset \texorpdfstring{$P^+(\lambda,2)/\sim$}{}}\label{section5}
 As we remarked earlier, it is clear that if $\sigma\in S_k$ then $\blambda$ and $\sigma\blambda$ are in the same equivalence class with respect to $\sim$ for all $\blambda\in P^+(\lambda,k)$. However the following example shows that outside $\lie{sl}_2$  the equivalence class of $\sim$ is in general bigger than the $S_k$ orbit of an element. Suppose that $\lie g$ is of type $\lie{sl}_3$, $k=3$ and $\lambda=3\omega_1+3\omega_2$. Then it is easily seen that
 $$\blambda=( \omega_2, \omega_1 + 2\omega_2, 2 \omega_1)\sim \bmu= ( 2 \omega_1,\omega_1, 2 \omega_1 + \omega_2),$$
 but  clearly $\blambda$ and $\bmu$ are not in the same $S_3$ orbit. However, when $k=2$, we prove below, Lemma~\ref{equiv2} that for all simple Lie algebras, the equivalence class is exactly the $S_2$ orbit.

\subsection{}\label{equiv-des} We begin with an equivalent formulation of the preorder in the case $k=2$.
\begin{prop}\label{prop4-1} Let $\lie g$ be a finite-dimensional simple complex Lie algebra and let $\blambda=(\lambda_1,\lambda_2)$ and $\bmu=(\mu_1,\mu_2)$ be elements of  $P^+(\lambda,2)/\sim$ for some $\lambda\in P^+$.
Then
\begin{gather*}\blambda\preceq\bmu\iff (\lambda_1-\mu_1)(h_\alpha)(\mu_1-\lambda_2)(h_\alpha) \geq 0 ,\  \ {\rm{for\ all}}\ \ \alpha\in R^+,\\ \iff (\lambda_1-\mu_1)(h_\alpha)(\mu_1-\lambda_2)(h_\alpha) \geq 0,\  \ {\rm{for\ all}}\ \ \alpha\in R. \end{gather*}
In particular, if $\bow\in W$ is such that $\bow(\lambda_1-\lambda_2)\in P^+$, then
$$\blambda\preceq\bmu\iff  \bow(\lambda_1-\mu_1)\in P^+\ \ {\rm{and}}\ \ \bow(\mu_1-\lambda_2)\in P^+.$$
\end{prop}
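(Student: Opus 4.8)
The plan is to prove the chain of equivalences in \propref{prop4-1} by unwinding the definition of $\preceq$ in the case $k=2$ and then recognizing the resulting inequalities as a sign condition. Recall that for $k=2$ the preorder is controlled by the values $r_{\alpha,1}(\blambda)$ and $r_{\alpha,2}(\blambda)$. Since $r_{\alpha,2}$ is automatically equal to $\lambda(h_\alpha)$ for every element of $P^+(\lambda,2)$ and every $\alpha$, the only data that varies is $r_{\alpha,1}$, so $\blambda\preceq\bmu$ reduces to the single family of inequalities $r_{\alpha,1}(\blambda)\le r_{\alpha,1}(\bmu)$ for all $\alpha\in R^+$. First I would write $r_{\alpha,1}(\blambda)=\min\{\lambda_1(h_\alpha),\lambda_2(h_\alpha)\}$ and similarly for $\bmu$, so that the condition becomes $\min\{\lambda_1(h_\alpha),\lambda_2(h_\alpha)\}\le\min\{\mu_1(h_\alpha),\mu_2(h_\alpha)\}$ for all positive $\alpha$.

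The key algebraic step is to show that, for fixed $\alpha$ with $\lambda_1(h_\alpha)+\lambda_2(h_\alpha)=\mu_1(h_\alpha)+\mu_2(h_\alpha)=\lambda(h_\alpha)$, the inequality $\min\{\lambda_1(h_\alpha),\lambda_2(h_\alpha)\}\le\min\{\mu_1(h_\alpha),\mu_2(h_\alpha)\}$ is equivalent to the sign condition $(\lambda_1-\mu_1)(h_\alpha)\,(\mu_1-\lambda_2)(h_\alpha)\ge 0$. The mechanism here is that two pairs of numbers with equal sum are ordered by their minima exactly when the interval $[\min,\max]$ of the smaller-min pair contains the value $\mu_1(h_\alpha)$ of the other pair; writing $a=\lambda_1(h_\alpha)$, $b=\lambda_2(h_\alpha)$, $c=\mu_1(h_\alpha)$ with $a+b=c+d$, the product $(a-c)(c-b)\ge 0$ says precisely that $c$ lies between $a$ and $b$, which is the same as saying the min of $\{a,b\}$ is no larger than the min of $\{c,d\}$. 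I would verify this elementary two-line case analysis (splitting on which of $\lambda_1(h_\alpha),\lambda_2(h_\alpha)$ is the minimum) rather than grind through it in full.

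Next I would extend the sign condition from $R^+$ to all of $R$. Since $h_{-\alpha}=-h_\alpha$, replacing $\alpha$ by $-\alpha$ multiplies each factor $(\lambda_1-\mu_1)(h_\alpha)$ and $(\mu_1-\lambda_2)(h_\alpha)$ by $-1$, leaving the product unchanged; hence the condition for $\alpha$ and for $-\alpha$ coincide, and quantifying over $R^+$ is the same as quantifying over $R$. This gives the second displayed equivalence for free.

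For the final statement, I would fix $\bow\in W$ with $\bow(\lambda_1-\lambda_2)\in P^+$ and translate the sign condition into a dominance condition. Using the $W$-invariance $(\bow\gamma)(\bow h_\alpha)=\gamma(h_\alpha)$ together with the fact that $\bow$ permutes $R$, the family of products over $\alpha\in R$ can be reindexed after applying $\bow$. The cleanest route is to observe that for any $\gamma\in P$, $\bow\gamma\in P^+$ is equivalent to $\gamma(h_\alpha)$ having the appropriate sign for all $\alpha$; concretely, $\bow(\lambda_1-\lambda_2)\in P^+$ pins down, for each $\alpha$, the sign of $(\lambda_1-\lambda_2)(h_\alpha)=(\lambda_1-\mu_1)(h_\alpha)+(\mu_1-\lambda_2)(h_\alpha)$, and the sign condition then forces both summands $(\lambda_1-\mu_1)(h_\alpha)$ and $(\mu_1-\lambda_2)(h_\alpha)$ to share that same sign. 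This means $\bow(\lambda_1-\mu_1)$ and $\bow(\mu_1-\lambda_2)$ are both dominant. The main obstacle I anticipate is this last bookkeeping: two numbers whose product is nonnegative need not individually have a prescribed sign unless one controls the sign of their sum, so the role of $\bow(\lambda_1-\lambda_2)\in P^+$ is exactly to supply that control, and I would take care to argue both implications (that the two dominance conditions give the sign condition, and conversely) using that the sum of the two differences is $\lambda_1-\lambda_2$.
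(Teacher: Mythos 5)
Your proposal is correct and follows essentially the same route as the paper: reduce to the single family of inequalities on $r_{\alpha,1}$, convert to the sign condition, pass from $R^+$ to $R$ via $h_{-\alpha}=-h_\alpha$, and use that the two differences sum to $\lambda_1-\lambda_2$ to upgrade the product condition to dominance. The only cosmetic difference is in the first equivalence, where the paper phrases the comparison of minima as $|(\mu_1-\mu_2)(h_\alpha)|\le|(\lambda_1-\lambda_2)(h_\alpha)|$ and squares, while you argue directly that $(a-c)(c-b)\ge 0$ expresses betweenness; these are the same elementary fact.
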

\begin{pf}
Since $r_{\alpha, 2}(\bmu)=\lambda(h_\alpha) $, we see that
\begin{equation*} r_{\alpha,2}(\bmu)-2r_{\alpha,1}(\bmu)=\begin{cases} (\mu_1-\mu_2)(h_\alpha),\ \ {\rm{if}}\ \ \mu_2(h_\alpha)\le \mu_1(h_\alpha),\\
(\mu_2-\mu_1)(h_\alpha),\ \ {\rm{otherwise}},\end{cases}\end{equation*}
or in other words that
\begin{equation}\label{alt} r_{\alpha,2}(\bmu)-2r_{\alpha,1}(\bmu)=|(\mu_1-\mu_2)(h_\alpha)|.\end{equation}
 Since $k=2$, we see that
 $$\blambda\preceq\bmu\iff r_{\alpha,1}(\blambda)\le r_{\alpha,1}(\bmu) \text{ for all } \alpha \in R^+,$$
 and hence we get
\begin{align*}
\blambda\preceq\bmu
 & \iff
  r_{\alpha,2}(\bmu)-2r_{\alpha,1}(\bmu) \le
  r_{\alpha,2}(\blambda)-2r_{\alpha,1}(\blambda) \\
 & \iff |(\mu_1-\mu_2)(h_\alpha)|\le |(\lambda_1-\lambda_2)(h_\alpha)| \\
 & \iff (\mu_1-\mu_2)(h_\alpha)^2\le (\lambda_1-\lambda_2)(h_\alpha)^2 \\
 & \iff (2\mu_1-\lambda_1-\lambda_2)(h_\alpha)^2\le (\lambda_1-\lambda_2)(h_\alpha)^2 \\
 & \iff (\lambda_1-\mu_1)(h_\alpha)(\mu_1-\lambda_2)(h_\alpha) \geq 0.
\end{align*}
for all $\alpha\in R^+$. Since $h_{-\alpha}=-h_\alpha$, we see that
$$\blambda\preceq\bmu \iff (\lambda_1-\mu_1)(h_\alpha)(\mu_1-\lambda_2)(h_\alpha) \ge 0
 \quad \text{\rm for all $\alpha \in R$}.$$

Now, let  $\bow\in W$ be such that $\bow(\lambda_1-\lambda_2)\in P^+$.
If $\blambda\preceq\bmu$, then for all $\alpha \in R^{+}$,
$$\bow(\lambda_1-\mu_1)(h_\alpha)\bow(\mu_1-\lambda_2)(h_\alpha)=
(\lambda_1-\mu_1)(h_{\bow^{-1}\alpha})(\mu_1-\lambda_2)(h_{\bow^{-1}\alpha}) \ge 0$$
by the first statement of the proposition. Also, we have
$$\bow(\lambda_1-\mu_1)(h_\alpha)+\bow(\mu_1-\lambda_2)(h_\alpha)
=\bow(\lambda_{1}-\lambda_{2})(h_{\alpha}) \ge 0 \quad
\text{for all $\alpha \in R^{+}$}$$
since $\bow(\lambda_1-\lambda_2)\in P^+$ by assumption.
Thus we conclude that $\bow(\lambda_1-\mu_1)(h_\alpha) \ge 0$ and
$\bow(\mu_1-\lambda_2)(h_{\alpha}) \ge 0$
for all $\alpha \in R^{+}$, which implies that
both of $\bow(\lambda_1-\mu_1)$ and
$\bow(\mu_1-\lambda_2)$ are dominant.
Conversely, assume that both of $\bow(\lambda_1-\mu_1)$ and
$\bow(\mu_1-\lambda_2)$ are dominant. Then, for all $\alpha \in R$,
$$(\lambda_1-\mu_1)(h_{\alpha})(\mu_1-\lambda_2)(h_{\alpha})=
\bow(\lambda_1-\mu_1)(h_{\bow^{-1}\alpha})
\bow(\mu_1-\lambda_2)(h_{\bow^{-1}\alpha}) \ge 0.$$
Hence, by the first statement of the proposition, we have
$\blambda\preceq\bmu$. Thus the second statement of
the proposition is established.
\end{pf}

\subsection{}
%
The next result gives information about the maximal elements in $P^+(\lambda, 2)/\sim$.
\begin{lem} \label{max}
Let $\lambda\in P^+$ and let $i\in I$, $\bow\in W$ be such that
$\lambda-\bow^{-1}\omega_i\in  P^+$.
Then the equivalence classes of $(\lambda,\lambda)$ and
$(\lambda,\lambda-\bow^{-1}\omega_i)$  are maximal
in the poset $P^+(2\lambda,2)$ and $P^+(2\lambda-\bow^{-1}\omega_i,2)$,
respectively.
\end{lem}
\begin{pf}
Suppose that $\bmu=(\mu_1,\mu_2)\in P^+(2\lambda,2)$ is
such that $(\lambda,\lambda)\preceq\bmu$ in $P^+(2\lambda,2)/\sim$.
Using Proposition \ref{equiv-des}, we get $\lambda-\mu_1\in P^+$ and
$\mu_1-\lambda\in P^+$ which forces $\mu_1=\mu_2=\lambda$ as required.

Similarly, if $\bmu\in P^+(2\lambda-\bow^{-1}\omega_i,2)/\sim$ with
$(\lambda,\lambda-\bow^{-1}\omega_i)\preceq\bmu$,
then Proposition \ref{equiv-des} gives, $\bow(\lambda-\mu_1)\in P^+$ and
$\bow(\mu_1-\lambda)+\omega_i\in P^+$. But this is only possible
if either $\mu_1=\lambda_1$ or $\mu_1-\lambda_1=-\bow^{-1}\omega_i$.
In either case, this implies that $\bmu=(\lambda,\lambda-\bow^{-1}\omega_i)$ in
$P^+(2\lambda-\bow^{-1}\omega_i, 2)/\sim$.
\end{pf}
%

\subsection{}
%
{{%
Suppose that $\lie g$ is of type $A_n$.
Then we can refine the preceding result as follows.
Given $\lambda=\sum_{i=1}^nr_i\omega_i\in P^+$,
define elements $\lambda^s$, $s=1,2$ as follows.
If $r_i\in2\bz_+$ for all $i\in I$, then take $\lambda^1=\lambda^2=\lambda$.
Otherwise let $1\le i_0< i_1\cdots <i_p\le n$ be
the set where $r_i$ is odd and set $I_+=I\setminus\{i_0,\cdots,i_p\}$.
Define
$$\lambda^1=
  \sum_{s=0}^p((r_{i_s}+ (-1)^s)/2)\omega_{i_s}+\sum_{i\in I_+}(r_i/2)\omega_i, \qquad
  \lambda^2=\lambda-\lambda^1.$$

In either case, set $\blambda_{\max}=(\lambda^1,\lambda^2)$.

\begin{prop}\label{maxslr} Let $\lambda\in P^+$ and $\lie g$ be of type $A_n$. Then either $\lambda^1=\lambda^2$ or $\lambda^2=\lambda^1-\bow^{-1}\omega_i$ for some $\bow\in W$ and $i\in I$. In either case, $\blambda_{\max}$ is the unique maximal element in $P^+(\lambda,2)/\sim$.
\end{prop}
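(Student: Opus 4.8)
The plan is to first verify the dichotomy for $\lambda^{1}-\lambda^{2}$ (which is exactly what is needed to invoke \lemref{max}), and then to upgrade ``maximal'' to ``greatest'', from which uniqueness is immediate. I would begin by recording that both $\lambda^{1}$ and $\lambda^{2}$ are dominant: for $i\in I_{+}$ the coefficient of $\omega_{i}$ in each is $r_{i}/2\ge 0$, while for $i=i_{s}$ the coefficients are $(r_{i_{s}}\pm(-1)^{s})/2\ge 0$ since $r_{i_{s}}\ge 1$ is odd. A direct subtraction then gives
$$\lambda^{1}-\lambda^{2}=\sum_{s=0}^{p}(-1)^{s}\omega_{i_{s}},$$
the empty sum (all $r_{i}$ even) corresponding to the first alternative $\lambda^{1}=\lambda^{2}$.

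For the main alternative I would pass to the standard realization of weights of $\lie{g}=\lie{sl}_{n+1}$: write $\omega_{i}=\varepsilon_{1}+\cdots+\varepsilon_{i}$ with $\sum_{j}\varepsilon_{j}=0$, so that $W=S_{n+1}$ permutes the $\varepsilon_{j}$. Expanding, $\lambda^{1}-\lambda^{2}=\sum_{j=1}^{n+1}c_{j}\varepsilon_{j}$ with $c_{j}=\sum_{s:\,i_{s}\ge j}(-1)^{s}$. Because $i_{0}<i_{1}<\cdots<i_{p}$, each $c_{j}$ is a tail sum $\sum_{s\ge t}(-1)^{s}$ of the alternating sequence, and such a tail sum equals $0$ when it has an even number of terms and $(-1)^{p}$ when it has an odd number of terms; in particular $c_{j}\in\{0,(-1)^{p}\}$ for every $j$. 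Hence $\lambda^{1}-\lambda^{2}=(-1)^{p}\sum_{j\in S}\varepsilon_{j}$ where $S=\{j:c_{j}\ne 0\}$, and $\sum_{j\in S}\varepsilon_{j}$ is a permutation of $\omega_{|S|}$, i.e. lies in $W\omega_{|S|}$. Using $-\omega_{m}\in W\omega_{n+1-m}$ (the involution $-\bow_{0}$), one concludes $\lambda^{1}-\lambda^{2}\in W\omega_{i}$ for a suitable $i\in I$, so $\lambda^{2}=\lambda^{1}-\bow^{-1}\omega_{i}$ for some $\bow\in W$. This is the dichotomy, and \lemref{max}, applied with its ``$\lambda$'' taken to be $\lambda^{1}$ (noting $\lambda^{1}-\bow^{-1}\omega_{i}=\lambda^{2}\in P^{+}$), shows that $\blambda_{\max}$ is maximal.

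It then remains to prove uniqueness, which I would obtain by showing $\blambda_{\max}$ is in fact the greatest element. For $k=2$ one has $\blambda\preceq\bmu\iff r_{\alpha,1}(\blambda)\le r_{\alpha,1}(\bmu)$ for all $\alpha\in R^{+}$ (\propref{prop4-1}), and $r_{\alpha,1}(\bmu)=\tfrac12\bigl(\lambda(h_{\alpha})-|(\mu_{1}-\mu_{2})(h_{\alpha})|\bigr)$. The crucial point is that $\omega_{|S|}$ is minuscule in type $A_{n}$, so every element of $W\omega_{|S|}$ pairs with each coroot to a value in $\{0,\pm 1\}$; hence $(\lambda^{1}-\lambda^{2})(h_{\alpha})\in\{0,\pm 1\}$ for all $\alpha$. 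Since $(\mu_{1}-\mu_{2})(h_{\alpha})\equiv\lambda(h_{\alpha})\pmod 2$ for every $\bmu\in P^{+}(\lambda,2)$, this forces $|(\lambda^{1}-\lambda^{2})(h_{\alpha})|=\lambda(h_{\alpha})\bmod 2$, whereas parity gives $|(\mu_{1}-\mu_{2})(h_{\alpha})|\ge\lambda(h_{\alpha})\bmod 2$ for every such $\bmu$. Therefore $r_{\alpha,1}(\bmu)\le r_{\alpha,1}(\blambda_{\max})$ for all $\bmu$ and all $\alpha\in R^{+}$, i.e. $\bmu\preceq\blambda_{\max}$ for every $\bmu$. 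Thus $\blambda_{\max}$ is the greatest element of $P^{+}(\lambda,2)/\!\sim$, and in particular the unique maximal one.

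The main obstacle is the coordinate computation in the second paragraph: the entire argument rests on the collapse of the alternating sum $\sum_{s}(-1)^{s}\omega_{i_{s}}$ to a sign times a $0/1$ vector, equivalently to the $W$-orbit of $\pm$ a minuscule fundamental weight. Once this is established, both the structural dichotomy and the parity argument for greatest-ness are routine.
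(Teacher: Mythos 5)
Your proof is correct and follows essentially the same route as the paper: both arguments hinge on showing that $(\lambda^1-\lambda^2)(h_\alpha)\in\{0,\pm1\}$ for all roots (so that $\lambda^1-\lambda^2$ lies in the Weyl orbit of a minuscule fundamental weight, giving the dichotomy) and then on the $k=2$ reformulation of $\preceq$ from Proposition~\ref{prop4-1} to conclude that \emph{every} $\bmu\in P^+(\lambda,2)$ satisfies $\bmu\preceq\blambda_{\max}$, i.e.\ that $\blambda_{\max}$ is the greatest element. The only differences are in packaging: you spell out the $\varepsilon$-coordinate collapse of the alternating sum that the paper dismisses as ``elementary,'' and you phrase the final step via the parity lower bound on $|(\mu_1-\mu_2)(h_\alpha)|$ rather than the paper's sign analysis of the product $(\mu_1-\lambda^1)(h_\alpha)(\mu_1-\lambda^2)(h_\alpha)$.
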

\begin{pf} If $\lambda^1\ne\lambda^2$, then by definition, we have $$\lambda^1-\lambda^2= \omega_{i_0}-\omega_{i_1}+\cdots +(-1)^p\omega_{i_p},$$ where $0\le i_0<\cdots< i_p\le n$. It is elementary to see that $(\lambda^1-\lambda^2)(h_\alpha)\in\{0,\pm 1\}$, i.e., $\lambda^1-\lambda^2$ is in $W \tau$ for some minuscule $\tau \in P^+$, hence in $W \omega_i$ for some $i \in I$.
It remains to prove that it is the unique maximal element.
In other words, we must prove that if $\bmu\in P^+(\lambda,2)$
then $\bmu\preceq\blambda_{\max}$. Again, using Proposition \ref{equiv-des}
it suffices to prove that
$$(\mu_1-\lambda^1)(h_\alpha)(\mu_1-\lambda^2)(h_\alpha)=(\mu_1-\lambda^1)(h_\alpha)(\lambda^1-\mu_2)(h_\alpha)\ge 0.$$
If $(\mu_1-\lambda^1)(h_\alpha)=0$  there is nothing to prove.
 If $(\mu_1-\lambda^1)(h_\alpha)>0$ then
 since $(\lambda^1-\lambda^2)(h_\alpha)\in\{0,\pm 1\}$ we get
   $\mu_1(h_\alpha)\ge\lambda^2(h_\alpha)$ as required. The case when $(\mu_1-\lambda^1)(h_\alpha)<0$ is identical.
\end{pf}}}

\subsection{}
%
%
\begin{prop} \label{prop:mid}
Let $\blambda, \bmu \in P^+(\lambda,2)/\sim$ with $\blambda\prec\bmu$ and
assume there exists  $\bow\in W$ and $i_0\in I$ such that $\bow(\lambda_1-\lambda_2)\in P^+$ and
\begin{equation} \label{support}
\bow(\lambda_1-\mu_1)(h_{i_0})\bow(\mu_1-\lambda_2)(h_{i_0}) >0.
\end{equation}
Then, $(\lambda_1-\bow^{-1}\omega_{i_0},\lambda_2+\bow^{-1}\omega_{i_0})\in P^+(\lambda,2)$ and
\begin{equation} \label{cover2}
\blambda \prec (\lambda_1-\bow^{-1}\omega_{i_0},\lambda_2+\bow^{-1}\omega_{i_0}) \preceq \bmu.
\end{equation}
\end{prop}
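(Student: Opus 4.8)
The plan is to study the candidate pair $\bnu := (\nu_1,\nu_2)$ with $\nu_1 = \lambda_1 - \bow^{-1}\omega_{i_0}$ and $\nu_2 = \lambda_2 + \bow^{-1}\omega_{i_0}$, and to set $\eta_1 := \bow(\lambda_1-\mu_1)$, $\eta_2 := \bow(\mu_1-\lambda_2)$. First I would record the consequences of the hypotheses. Since $\blambda\prec\bmu$ and $\bow(\lambda_1-\lambda_2)\in P^+$, \propref{equiv-des} gives $\eta_1,\eta_2\in P^+$. Moreover \eqref{support} reads $\eta_1(h_{i_0})\,\eta_2(h_{i_0})>0$; as both factors are nonnegative integers (by dominance), this forces $\eta_1(h_{i_0})\ge 1$ and $\eta_2(h_{i_0})\ge 1$. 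Consequently, since a dominant weight whose $i_0$-coordinate is $\ge 1$ stays dominant after subtracting $\omega_{i_0}$, we get $\eta_1-\omega_{i_0}\in P^+$ and $\eta_2-\omega_{i_0}\in P^+$. Throughout I will use the two rewritings $\nu_1 = \mu_1 + \bow^{-1}(\eta_1-\omega_{i_0})$ and $\nu_2 = \mu_1 - \bow^{-1}(\eta_2-\omega_{i_0})$, which come from $\lambda_1 = \mu_1 + \bow^{-1}\eta_1$ and $\lambda_2 = \mu_1 - \bow^{-1}\eta_2$.

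The main obstacle is showing $\bnu\in P^+(\lambda,2)$. As $\nu_1+\nu_2=\lambda$, this reduces to the dominance of $\nu_1$ and $\nu_2$, and the difficulty is that $\bow^{-1}\omega_{i_0}$ is in general not dominant. I would test each simple coroot $h_j$ by a sign case-analysis on the root $\bow\alpha_j$, using the invariance relation $(\bow^{-1}\zeta)(h_j)=\zeta(h_{\bow\alpha_j})$ together with the elementary fact that a dominant weight pairs nonnegatively with a positive coroot and nonpositively with a negative one. For $\nu_1$: when $\bow\alpha_j$ is a negative root the direct form gives $\nu_1(h_j)=\lambda_1(h_j)-\omega_{i_0}(h_{\bow\alpha_j})\ge\lambda_1(h_j)\ge 0$, while when $\bow\alpha_j\in R^+$ the rewriting gives $\nu_1(h_j)=\mu_1(h_j)+(\eta_1-\omega_{i_0})(h_{\bow\alpha_j})\ge 0$. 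The argument for $\nu_2$ is the mirror image: the direct form $\nu_2(h_j)=\lambda_2(h_j)+\omega_{i_0}(h_{\bow\alpha_j})$ handles $\bow\alpha_j\in R^+$, and the rewriting handles the negative case since $-(\eta_2-\omega_{i_0})(h_{\bow\alpha_j})\ge 0$ when $\bow\alpha_j$ is negative. Hence $\nu_1,\nu_2\in P^+$ and $\bnu\in P^+(\lambda,2)$. I expect this case-split to be the crux of the argument.

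Finally I would establish \eqref{cover2} by two applications of \propref{equiv-des}. For $\blambda\preceq\bnu$ I use the witness $\bow$ (legitimate since $\bow(\lambda_1-\lambda_2)\in P^+$) and check $\bow(\lambda_1-\nu_1)=\omega_{i_0}\in P^+$ and $\bow(\nu_1-\lambda_2)=\eta_1+\eta_2-\omega_{i_0}\in P^+$. For $\bnu\preceq\bmu$ I first note $\bow(\nu_1-\nu_2)=\eta_1+\eta_2-2\omega_{i_0}\in P^+$, so $\bow$ is again a legitimate witness for the pair $\bnu$, and then check $\bow(\nu_1-\mu_1)=\eta_1-\omega_{i_0}\in P^+$ and $\bow(\mu_1-\nu_2)=\eta_2-\omega_{i_0}\in P^+$; all these memberships follow from the inequalities $\eta_k(h_{i_0})\ge 1$ recorded in the first paragraph. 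For the strictness $\blambda\prec\bnu$ I observe that $\bnu\preceq\blambda$ would require $\bow(\nu_1-\lambda_1)=-\omega_{i_0}\in P^+$, which is false, so $\blambda\not\sim\bnu$. This yields the chain $\blambda\prec\bnu\preceq\bmu$ and proves the proposition.
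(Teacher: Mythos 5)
Your argument is correct and follows essentially the same route as the paper: the dominance of $\lambda_1-\bow^{-1}\omega_{i_0}$ and $\lambda_2+\bow^{-1}\omega_{i_0}$ rests on the same dichotomy (whether $\bow\alpha_j$ is a positive or a negative root, which the paper phrases as all coefficients of $\bow h_j$ having one sign), and the chain \eqref{cover2} is read off from the reformulation of $\preceq$ in Proposition~\ref{prop4-1}. Your two local variations are both sound and slightly cleaner than the paper's: in the hard dominance case you route through $\mu_1$ and the dominant weights $\bow(\lambda_1-\mu_1)-\omega_{i_0}$, $\bow(\mu_1-\lambda_2)-\omega_{i_0}$ instead of the paper's lower bound $(\lambda_1-\lambda_2)(h_j)\ge r_{i_0}$, and you obtain strictness directly from $\bow(\nu_1-\lambda_1)=-\omega_{i_0}\notin P^+$ rather than by invoking Lemma~\ref{max}.
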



\begin{pf}
First we remark that by \eqref{support} and the assumption that
$\bow(\lambda_1-\lambda_2) \in P^+$,
\begin{equation} \label{mid01}
\bow(\lambda_1-\lambda_2)(h_{i_{0}}) =
 \bow(\lambda_1-\mu_1)(h_{i_{0}})+\bow(\mu_1-\lambda_2)(h_{i_{0}}) > 0.
\end{equation}

Let us show that $\lambda_1-\bow^{-1}\omega_{i_0}$ and
$\lambda_2+\bow^{-1}\omega_{i_0}$ are dominant,
which implies that
$(\lambda_1-\bow^{-1}\omega_{i_0},\lambda_2+\bow^{-1}\omega_{i_0})\in P^+(\lambda,2)$.
For $j\in I$, write $$\bow h_j=\sum_{i=1}^n r_ih_i,$$ and
note that either $r_i\ge 0$ for all $i \in I$ or $r_i\le 0$ for all $i\in I$.
If $r_{i_0}\le  0$, then obviously $(\lambda_1- \bow^{-1}\omega_{i_0})(h_j)\ge 0$.
If $r_i \ge 0$ for all $i\in I$, then we have
$$(\lambda_1-\lambda_2)(h_j)=
 \bow(\lambda_1-\lambda_2)(\bow h_j)
 \ge r_{i_0}\bow(\lambda_1-\lambda_2)(h_{i_0}) \ge r_{i_0},$$
where the first inequality follows from the assumption
that $\bow(\lambda_1-\lambda_2)\in P^+$, and the second inequality
follows from \eqref{mid01}.
Hence, $\lambda_1(h_j) \ge \lambda_{2}(h_{j})+r_{i_0} \ge r_{i_{0}}$, and
hence $(\lambda_1-\bow^{-1}\omega_{i_0})(h_{j}) \ge 0$
since $\bow^{-1}\omega_{i_0}(h_j)=r_{i_0}$.
Thus we have proved that $\lambda_1-\bow^{-1}\omega_{i_0}\in P^+$.
To prove that $\lambda_2+\bow^{-1}\omega_{i_0}\in P^+$,
we note that if $r_{i_0}\ge 0$ there is nothing to prove.
If $r_i \le 0$ for all $i\in I$, then we have
$$(\lambda_1-\lambda_2)(h_j)=\bow(\lambda_1-\lambda_2)(\bow h_j) \le
 r_{i_0}\bow(\lambda_1-\lambda_2)(h_{i_0})\le r_{i_0},$$
where the first inequality follows from the assumption
that $\bow(\lambda_1-\lambda_2)\in P^+$, and the second inequality
follows from \eqref{mid01}.
Hence, $\lambda_2(h_j) \ge \lambda_{1}(h_{j})-r_{i_0} \ge -r_{i_{0}}$,
and so
$(\lambda_2+\bow^{-1}\omega_{i_0})(h_j)=\lambda_2(h_j)+r_{i_0}\ge 0$,
proving that $\lambda_2+\bow^{-1}\omega_i\in P^+$.

By Proposition \ref{equiv-des},
we see that  $\bow(\lambda_1-\mu_1)$ and $ \bow(\mu_1-\lambda_2)$ are in $P^+$.
Hence \eqref{support} gives that
$$\bow(\lambda_1-\mu_1)(h_{i_0}) > 0, \quad
\bow(\mu_1-\lambda_2)(h_{i_0}) > 0, \quad \text{and hence} \quad
\bow(\lambda_1-\lambda_2)(h_{i_0}) > 0,$$
which in turn gives that
\begin{equation} \label{cover1}
\bow(\lambda_1-\mu_1)-\omega_{i_0}, \
\bow(\mu_1-\lambda_2)-\omega_{i_0}, \
\bow(\lambda_1-\lambda_2)-\omega_{i_0} \in P^+.
\end{equation}

To prove
$$\blambda \preceq (\lambda_1-\bow^{-1}\omega_{i_0},\lambda_2+\bow^{-1}\omega_{i_0}) \preceq \bmu,$$
we must show that for all $\alpha\in R^+$,
\begin{align*}
& \bow^{-1}\omega_{i_0}(h_\alpha)
  (\lambda_1-\bow^{-1}\omega_{i_0} -\lambda_2)(h_\alpha) \ge 0, \\
& (\lambda_1-\bow^{-1}\omega_{i_0} -\mu_1)(h_\alpha)
  (\mu_1-\lambda_2-\bow^{-1}\omega_{i_0})(h_\alpha)\ge 0,
\end{align*}
or equivalently that
\begin{align*}
& \omega_{i_0}(h_\alpha)(\bow(\lambda_1-\lambda_2)-\omega_{i_0})(h_\alpha)\ge 0, \\
& (\bow(\lambda_1-\mu_1)-\omega_{i_0})(h_\alpha)
  (\bow(\mu_1-\lambda_2)-\omega_{i_0})(h_\alpha) \ge 0.
\end{align*}
 But this is now immediate from \eqref{cover1}.

In order to prove \eqref{cover2}, it remains to show $\blambda \prec (\lambda_1-\bow^{-1}\omega_{i_0},\lambda_2+\bow^{-1}\omega_{i_0})$. For that notice that $\lambda_1-\bow^{-1}\omega_{i_0}\notin\{\lambda_1,\lambda_2\}$ since
then by Lemma \ref{max},  we would have that $\blambda$ is
a maximal element of $P^+(\lambda,2)$ and this would contradict the fact
that $\blambda\prec\bmu$.
\end{pf}

\subsection{}

In this section, we will show that for $k=2$,
equivalence classes in $P^+(\lambda,2)$ are
the $S_2$-orbits, generalizing the results of Section~\ref{section-classes}.

\begin{lem}\label{equiv2}
Let $\lie g$ be arbitrary and $\blambda,\bmu\in P^+(\lambda,2)$
for some $\lambda\in P^+$. Then $\blambda\sim\bmu$ iff
$\bmu$ and $\blambda$ are in the same $S_2$-orbit.
\end{lem}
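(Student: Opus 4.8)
The goal is to show $\blambda\sim\bmu\iff\bmu\in S_2\blambda$ for $\blambda,\bmu\in P^+(\lambda,2)$. The backward direction is immediate: if $\bmu=\sigma\blambda$ for $\sigma\in S_2$, then since $r_{\alpha,\ell}$ is defined as a minimum over unordered $\ell$-subsets of the components, it is manifestly invariant under permuting the two components, so $r_{\alpha,\ell}(\bmu)=r_{\alpha,\ell}(\blambda)$ for all $\alpha$ and $\ell$, giving $\blambda\sim\bmu$. This mirrors the first line of the proof of Lemma~\ref{Sk-orbit}.

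For the forward direction, the plan is to exploit the characterization of $\sim$ provided by Proposition~\ref{prop4-1}. Write $\blambda=(\lambda_1,\lambda_2)$ and $\bmu=(\mu_1,\mu_2)$. The identity \eqref{alt} shows that $r_{\alpha,2}-2r_{\alpha,1}=|(\nu_1-\nu_2)(h_\alpha)|$ for any $\bnu=(\nu_1,\nu_2)$, and since $r_{\alpha,2}(\blambda)=\lambda(h_\alpha)=r_{\alpha,2}(\bmu)$ automatically, the condition $\blambda\sim\bmu$ is equivalent to $r_{\alpha,1}(\blambda)=r_{\alpha,1}(\bmu)$ for all $\alpha$, which by \eqref{alt} is equivalent to
\begin{equation*}
|(\lambda_1-\lambda_2)(h_\alpha)|=|(\mu_1-\mu_2)(h_\alpha)|\quad\text{for all }\alpha\in R.
\end{equation*}
So the task reduces to the following purely weight-theoretic statement: if two differences $\delta:=\lambda_1-\lambda_2$ and $\epsilon:=\mu_1-\mu_2$ in $P$ satisfy $|\delta(h_\alpha)|=|\epsilon(h_\alpha)|$ for all $\alpha\in R$, and moreover $\lambda_1+\lambda_2=\mu_1+\mu_2=\lambda$, then $\{\mu_1,\mu_2\}=\{\lambda_1,\lambda_2\}$. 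Since $\mu_1=(\lambda+\epsilon)/2$ and $\lambda_1=(\lambda+\delta)/2$, and similarly for the second components, it suffices to show $\epsilon=\pm\delta$; the two signs correspond exactly to the two elements of the $S_2$-orbit.

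Thus the crux is: \emph{$|\delta(h_\alpha)|=|\epsilon(h_\alpha)|$ for all $\alpha\in R$ forces $\epsilon=\pm\delta$.} The natural approach is to fix $\bow\in W$ with $\bow\delta\in P^+$ (so $\bow\delta(h_\alpha)\ge0$ for all $\alpha\in R^+$) and argue one simple root at a time, using that the hypothesis is $W$-invariant: replacing $\delta,\epsilon$ by $\bow\delta,\bow\epsilon$ preserves the equalities $|\delta(h_\alpha)|=|\epsilon(h_\alpha)|$ since $h_{\bow\alpha}=\bow h_\alpha$ and $W$ permutes $R$. So I may assume $\delta\in P^+$. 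Then for each simple root $\alpha_i$ we have $\epsilon(h_i)=\pm\delta(h_i)$, and I must rule out mixed signs across different $i$. The obstacle is that a priori the sign could vary with $i$; the key is that $\delta,\epsilon$ are constrained by $|\delta(h_\alpha)|=|\epsilon(h_\alpha)|$ on \emph{all} roots, not just simple ones, and for a non-simple $\alpha$ whose $h_\alpha$ pairs nontrivially with several $\alpha_i$, a sign mismatch produces a contradiction. I expect this sign-consistency argument—propagating a single global sign from the equalities on compound roots—to be the main difficulty, and I would handle it by taking a root $\alpha=\alpha_i+\alpha_j+\cdots$ supported on indices where the signs differ and expanding $\epsilon(h_\alpha)=\sum_k \epsilon(h_{\alpha_{i_k}})$ against $\pm\delta(h_\alpha)$. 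Once $\epsilon=\pm\delta$ is established, the conclusion $\bmu\in S_2\blambda$ is immediate, completing the proof.
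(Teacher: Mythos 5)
Your reduction is exactly the paper's: the backward direction is immediate, and via \eqref{alt} the forward direction reduces to showing that $|\delta(h_\alpha)|=|\epsilon(h_\alpha)|$ for all $\alpha\in R$ forces $\epsilon=\pm\delta$, where $\delta=\lambda_1-\lambda_2$ and $\epsilon=\mu_1-\mu_2$ (a single global sign then gives $\bmu=\blambda$ or $\bmu=(\lambda_2,\lambda_1)$). The paper's proof of Lemma~\ref{equiv2} is precisely this sign-consistency argument. However, your write-up stops where the actual work begins: you say you ``would handle it by taking a root $\alpha=\alpha_i+\alpha_j+\cdots$ supported on indices where the signs differ and expanding,'' without exhibiting the root or deriving the contradiction. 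Since this is the entire content of the lemma, as written there is a genuine gap.

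Two points are needed to close it. First, a sum of simple roots ``supported on indices where the signs differ'' need not be a root (e.g.\ $\alpha_1+\alpha_3$ in type $A_3$). The correct choice, which is what the paper makes, is to take two conflicting indices $i_1$ (with $\epsilon(h_{i_1})=\delta(h_{i_1})\ne0$) and $i_2$ (with $\epsilon(h_{i_2})=-\delta(h_{i_2})\ne0$) at minimal distance in the Dynkin diagram; minimality forces every intermediate node $j$ on the geodesic to satisfy $\delta(h_j)=\epsilon(h_j)=0$, and then $\beta=\sum_{j\in I_0}\alpha_j$ over the resulting connected set $I_0$ is a positive root. Second, one must actually verify the contradiction: writing $h_\beta$ as a positive combination of the $h_j$ for $j\in I_0$, only the $i_1$- and $i_2$-terms survive, so $\epsilon(h_\beta)=c_1\delta(h_{i_1})-c_2\delta(h_{i_2})$ with $c_1,c_2>0$, and this cannot equal $\pm\bigl(c_1\delta(h_{i_1})+c_2\delta(h_{i_2})\bigr)=\pm\delta(h_\beta)$ because both summands are nonzero. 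Your preliminary normalization $\delta\in P^+$ via the Weyl group is harmless but unnecessary; the argument above works for arbitrary $\delta$.
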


\begin{pf} Suppose that $\blambda=(\lambda_1,\lambda_2)$ and
$\bmu=(\mu_1,\mu_2)$ and set $\nu = \lambda_1 - \lambda_2$ and $\nu' = \mu_1 - \mu_2$.
If $\blambda\sim\bmu$ then we see from \eqref{alt}
that for all $\alpha\in R$,
we have $\nu(h_\alpha)=\pm \nu'(h_\alpha)$, where the sign depends on $\alpha$.
It suffices to show that we can choose the sign consistently.
Suppose for a contradiction that this is not so,
then there exists a  connected subset $I_0$ of $I$ and $i_1,i_2\in I_0$
such that
$$\nu(h_{i_1})=\nu'(h_{i_1}) \neq 0, \quad
  \nu(h_{i_2})=-\nu'(h_{i_2}) \neq 0, \quad
  \nu(h_j)=\nu'(h_j)=0,\ \ j\in I_0 \setminus \{ i_1,i_2\}.$$
Set $\beta=\sum_{i\in I_0}\alpha_i$; we can easily check that
$\beta$ is a (positive) root, i.e., $\beta \in R^{+}$. Then,
$$\nu(h_\beta)=\nu(h_{i_1})+\nu(h_{i_2})=
  \nu'(h_{i_1})-\nu'(h_{i_2}) \ne
  \pm\left(\nu'(h_{i_1})+\nu'(h_{i_2})\right).$$
Since $\beta\in R^+$, we get the required contradiction.
\end{pf}


\section{Proof of \texorpdfstring{Theorem~\ref{mainthm}\,{\rm (iii)}}{Theorem 1 (iii)}}
%
\label{section6}

In this section, we assume that $\lie g$ is of type $A_2$ and
prove Theorem~\ref{mainthm}~(iii). We begin by showing that we can restrict our attention to certain elements $\blambda$ and $\bmu$ of $P^+(\lambda,2)$.

%

\subsection{}  Since the poset $P^+(\lambda,2)$ is finite,  that it suffices to prove part (iii) of Theorem \ref{mainthm} for $\blambda$ and $\bmu$ where $\bmu$ is a cover of $\blambda$; i.e $\blambda\prec\bmu$ and there does not exist $\bnu\in P^+(\lambda,2)$ with $\blambda\prec\bnu\prec\bmu$. We  first show  that in fact it suffices to prove Theorem \ref{mainthm} (iii) for certain special $\blambda$  and also that for these $\blambda$ we can restrict our attention to certain special covers.

We shall use freely the following two facts. The first is well-known.
$$V(\lambda)\otimes V(\mu)\cong_{\lie g}V(\mu)\otimes V(\lambda).$$
The second fact is that the partial order on $P^+(\lambda, k)$ is
compatible with duals (see \eqref{dual} and \eqref{w0}) and
that for all $\lambda,\mu,\nu\in P^+$, we have
$$\dim \Hom_{\lie g}(V(\nu),\,V(\lambda)\otimes V(\mu)) =
  \dim \Hom_{\lie g}(V(-\bow_0\nu),\,V(-\bow_0\lambda) \otimes V(-\bow_0\mu)).$$
This allows us to switch freely between proving Theorem \ref{mainthm}(iii)
either for $\blambda \prec \bmu$ or for $-\bow_0\blambda\prec -\bow_0\bmu$.
Recall that $-\bow_0\omega_1=\omega_2$.


%
\begin{prop} \label{cover-elements}
Let $\lambda\in P^+$,
$\blambda=(\lambda_{1},\,\lambda_{2}) \in P^+(\lambda,2)$ and
assume that $\bmu=(\mu_1,\mu_2)$ covers $\blambda$.
It suffices to prove Theorem~1\,(iii) is true
when $\blambda$ and $\bmu$ satisfy the following conditions
\eqref{limitlambda} and \eqref{limitbmu}
for some $\bow \in \bigl\{\id,\, \bos_{1},\, \bos_{2}\bigr\}$:
\begin{gather} \label{limitlambda}
\bow(\lambda_{1}-\lambda_{2}) \in P^{+}, \quad
\bow(\lambda_1-\lambda_2)(h_1) > 0,
\end{gather}
and  either
\begin{gather} \label{limitbmu}
\begin{cases}
\bmu = (\lambda_{1}-\bow\omega_{1},\ \lambda_{2}+\bow\omega_{1}) \quad \text{\rm or} \\[1.5mm]
\bmu=(\lambda_1 - \bow(\lambda_1-\lambda_2)(h_1)\bow \omega_1, \
 \lambda_2 + \bow(\lambda_1-\lambda_2)(h_1)\bow \omega_1).
\end{cases}
\end{gather}
\end{prop}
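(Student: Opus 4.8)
The plan is to use two symmetries of the problem to move an arbitrary cover pair into the normal form prescribed by \eqref{limitlambda} and \eqref{limitbmu}. Since $P^+(\lambda,2)$ is finite it is enough to prove Theorem~\ref{mainthm}(iii) when $\bmu$ covers $\blambda$, so assume this. The two symmetries are: (S), interchanging the components, which fixes the class of $\blambda$ in $P^+(\lambda,2)/\!\sim$ and preserves $\dim\Hom_{\lie g}(V(\nu),V(\lambda_1)\otimes V(\lambda_2))$ by commutativity of the tensor product; and (D), passing to duals $\blambda\mapsto-\bow_0\blambda$, which preserves the relevant Hom-dimensions and is compatible with $\preceq$ by \eqref{dual} and \eqref{w0}. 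Both send cover pairs to cover pairs, so it suffices to show that every cover pair can be carried, by a product of (S) and (D), to one satisfying \eqref{limitlambda}--\eqref{limitbmu}.

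Because $\bmu$ covers $\blambda$, the class of $\blambda$ is not maximal, so Lemma~\ref{max} forces $\lambda_1\neq\lambda_2$; put $\nu=\lambda_1-\lambda_2\neq0$. First I normalize $\blambda$. Choose $\bow\in W$ with $\bow\nu\in P^+$ and write $\bow\nu=c\omega_1+c'\omega_2$, $c,c'\ge0$. On the pair $(c,c')$ both (S) and (D) act by interchanging the two entries, while on $\bow$ they act by $\bow\mapsto\bow_0\bow$ and $\bow\mapsto\bow_0\bow\bow_0$ respectively; moreover (S) partitions the six elements of $W\cong S_3$ into the three sets $\{\id,\bow_0\}$, $\{\bos_1,\bos_1\bos_2\}$, $\{\bos_2,\bos_2\bos_1\}$, each meeting $\{\id,\bos_1,\bos_2\}$ in exactly one element. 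A short case analysis on the position of $\bow\nu$---regular, or a positive multiple of $\omega_1$, or a positive multiple of $\omega_2$, using (D) to interchange the last two situations---then shows that after a suitable word in (S) and (D) one may assume $\bow\in\{\id,\bos_1,\bos_2\}$ and $c=\bow\nu(h_1)>0$, which is exactly \eqref{limitlambda}.

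It remains to identify $\bmu$. By Proposition~\ref{equiv-des}, $\blambda\prec\bmu$ yields $d:=\bow(\lambda_1-\mu_1)\in P^+$ and $\bow\nu-d=\bow(\mu_1-\lambda_2)\in P^+$; writing $d=d_1\omega_1+d_2\omega_2$ and $(c_1,c_2)=(c,c')$, this reads $0\le d_i\le c_i$, with $d\neq0$ and $d\neq\bow\nu$ (the latter would give $\bmu\sim\blambda$). If some index $i_0$ satisfies $0<d_{i_0}<c_{i_0}$, then $\bow(\lambda_1-\mu_1)(h_{i_0})\bow(\mu_1-\lambda_2)(h_{i_0})=d_{i_0}(c_{i_0}-d_{i_0})>0$, so Proposition~\ref{prop:mid} produces $(\lambda_1-\bow^{-1}\omega_{i_0},\lambda_2+\bow^{-1}\omega_{i_0})$ strictly between $\blambda$ and $\bmu$; as $\bmu$ is a cover this element must equal $\bmu$, forcing $d=\omega_{i_0}$ and hence $\bmu=(\lambda_1-\bow\omega_{i_0},\lambda_2+\bow\omega_{i_0})$ (here $\bow^{-1}=\bow$ since $\bow\in\{\id,\bos_1,\bos_2\}$). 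Otherwise $d_i\in\{0,c_i\}$ for each $i$, and excluding $d=0$ and $d=\bow\nu$ leaves $d\in\{c\omega_1,c'\omega_2\}$. Thus the cover $\bmu$ corresponds to $d\in\{\omega_1,\,c\omega_1,\,\omega_2,\,c'\omega_2\}$. The values $d=\omega_1$ and $d=c\omega_1$ give precisely the two cases of \eqref{limitbmu}. For $d\in\{\omega_2,c'\omega_2\}$ (which forces $c'>0$) a final application of (D) sends the move into the $\omega_1$-direction while keeping $\bow\in\{\id,\bos_1,\bos_2\}$ and replacing $c$ by $c'>0$, so \eqref{limitlambda} persists and $d$ becomes $\omega_1$, respectively $c'\omega_1$, landing in \eqref{limitbmu}.

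The main obstacle is the bookkeeping in the normalization of $\blambda$: one must track simultaneously the action of (S) and (D) on the Weyl element $\bow\in S_3$ and on the pair $(c,c')$, and verify that the two requirements of \eqref{limitlambda}---membership $\bow\in\{\id,\bos_1,\bos_2\}$ together with positivity $\bow\nu(h_1)>0$---can be met at once. This is delicate precisely on the walls, where interchanging $(c,c')$ can destroy positivity and an extra application of (D) (or of (S) followed by (D)) is needed to restore it. Once the normal form \eqref{limitlambda} is in force, the identification of the possible covers is a clean consequence of Proposition~\ref{prop:mid}, the only new input being that cover-minimality turns the existence of an interior index $i_0$ into the equality $d=\omega_{i_0}$.
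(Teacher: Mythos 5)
Your proposal is correct and follows essentially the same route as the paper: reduce to covers, normalize $\blambda$ via the component swap and the duality $-\bow_{0}$ so that \eqref{limitlambda} holds, then use Proposition~\ref{prop4-1} and Proposition~\ref{prop:mid} to split into the ``interior index'' case (forcing $\bmu=(\lambda_1-\bow\omega_{i_0},\lambda_2+\bow\omega_{i_0})$) and the boundary case (forcing the $\bow(\lambda_1-\lambda_2)(h_1)\bow\omega_1$ move), with a final dualization converting $\omega_2$-direction moves into $\omega_1$-direction ones. Your coordinate bookkeeping with $d=\bow(\lambda_1-\mu_1)$ is only a cosmetic repackaging of the paper's Cases 1 and 2 (the paper disposes of your $d=c'\omega_2$ subcase by the $S_2$-swap rather than by duality, but both work).
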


\begin{proof} We first prove that we  can assume that $\blambda$ satisfies the conditions in \eqref{limitlambda}. Suppose that $\blambda=(\lambda_1,\lambda_2)\in P^+(\lambda,2)$ is such that $\lambda_1-\lambda_2\in P^+$ but $\lambda_1(h_1)=\lambda_2(h_1)$. Since $\blambda$ is not the maximal element in $P^+(\lambda,2)$ it follows from Lemma \ref{max} that $\lambda_1\ne \lambda_2$ and hence we must have $\lambda_1(h_2)>\lambda_2(h_2)$. We have $-\bow_0\blambda\prec -\bow_0\bmu$ and hence $-\bow_0(\lambda_1-\lambda_2)(h_1)>0$. If $\bos_2(\lambda_1-\lambda_2)\in P^+$ or $\bos_1(\lambda_1-\lambda_2)\in P^+$, a similar argument shows that either $\blambda$ or $-\bow_0\blambda$ satisfies the conditions in \eqref{limitlambda}.  Suppose now that $\bow(\lambda_1-\lambda_2)\in P^+$ but $\bow\notin\{\id,\bos_1,\bos_2\}$. Then $\bow\bow_0\in\{\id,\bos_1,\bos_2\}$ and hence  we  can work with the pair $(-\bow_0\lambda_2, -\bow_0\lambda_1)$.

We now prove that we can also assume that $\bmu$
satisfies the conditions in \eqref{limitbmu}.

\paragraph{Case 1.}
%
Suppose that there exists $i \in I=\bigl\{1,\,2\bigr\}$ such that
\begin{equation*}
\bow(\lambda_{1}-\mu_{1})(h_{i}) \bow(\mu_{1}-\lambda_{2})(h_{i}) > 0,
\end{equation*}
where $\bow \in \bigl\{\id,\, \bos_{1},\, \bos_{2}\bigr\}$.
We see from Proposition~\ref{prop4-1} that
$\bow(\lambda_{1}-\mu_{1})$ and
$\bow(\mu_{1}-\lambda_{2}) \in P^{+}$. Thus we have
$\bow(\lambda_{1}-\mu_{1})(h_{i}) > 0$ and
$\bow(\mu_{1}-\lambda_{2})(h_{i}) > 0$. In particular,
%
%
\begin{equation} \label{eq:pos}
\bow(\lambda_{1}-\lambda_{2})(h_{i}) =
\bow(\lambda_{1}-\mu_{1})(h_{i}) +
\bow(\mu_{1}-\lambda_{2})(h_{i}) > 0.
\end{equation}

\paragraph{Subcase 1.1.}
%
If $i=1$, then it follows from Proposition 5.4 that
\begin{equation*}
\blambda \prec
 (\lambda_1-\bow^{-1}\omega_1,\,\lambda_2+\bow^{-1}\omega_1)=
 (\lambda_1-\bow\omega_1,\,\lambda_2+\bow\omega_1)
 \preceq\bmu.
\end{equation*}
Since $\bmu$ covers $\blambda$, it follows that $\bmu=(\lambda_1-\bow\omega_1,\,\lambda_2+\bow\omega_1)$
as required.

\paragraph{Subcase 1.2.}
%
If $i=2$, then it follows from Proposition 5.4 that
\begin{equation*}
\blambda \prec
 (\lambda_1-\bow^{-1}\omega_2,\,\lambda_2+\bow^{-1}\omega_2)
 = (\lambda_1-\bow\omega_2,\,\lambda_2+\bow\omega_2)
 \preceq\bmu.
\end{equation*}
By the ``duality'', we get
\begin{equation*}
-\bow_{0}\blambda \prec
(-\bow_{0}\lambda_1-(-\bow_{0}\bow\omega_2),\,
 -\bow_{0}\lambda_2+(-\bow_{0}\bow\omega_2))
 \preceq -\bow_{0}\bmu.
\end{equation*}

Since $-\bow_{0}\bmu$ covers $-\bow_{0}\blambda$, we get
\begin{equation*}
-\bow_{0}\bmu =
(-\bow_{0}\lambda_1-(-\bow_{0}\bow\omega_2),\,
 -\bow_{0}\lambda_2+(-\bow_{0}\bow\omega_2)).
\end{equation*}
We set $\ti{\bow} = \bow_0 \bow \bow_0$ and note that we have
\begin{equation*}
\ti{\bow}:=
\begin{cases}
\id & \text{if $\bow=\id$}, \\[1mm]
\bos_{2} & \text{if $\bow=\bos_{1}$}, \\[1mm]
\bos_{1} & \text{if $\bow=\bos_{2}$}.
\end{cases}
\end{equation*}
Moreover we also have
\begin{equation*}
\ti{\bow}((-\bow_0\lambda_1)-(-\bow_{0}\lambda_2))=
-\bow_{0}\bow(\lambda_{1}-\lambda_{2}) \in P^{+},
\end{equation*}
\begin{equation*}
\ti{\bow}((-\bow_0\lambda_1)-(-\bow_{0}\lambda_2))(h_{1})=
-\bow_{0}\bow(\lambda_{1}-\lambda_{2})(h_{1}) =
\bow(\lambda_{1}-\lambda_{2})(h_{2}) > 0 \quad \text{by \eqref{eq:pos}}.
\end{equation*}
Hence, $-\bow_0\blambda$ and $-\bow_0\bmu$ satisfy the conditions
(with $\bow$ replaced by $\ti{\bow}$).
Hence, if Theorem~1\,(iii) is established for this pair,
then it follows for the pair $\blambda$ and $\bmu$ as discussed earlier.

\paragraph{Case 2.}
%
Suppose that
\begin{equation*}
\bow(\lambda_{1}-\mu_{1})(h_{i}) \bow(\mu_{1}-\lambda_{2})(h_{i}) \le 0
\quad \text{for all $i \in I=\bigl\{1,\,2\bigr\}$},
\end{equation*}
where $\bow \in \bigl\{\id,\, \bos_{1},\, \bos_{2}\bigr\}$.
We see from Proposition 5.1 that
$\bow(\lambda_{1}-\mu_{1}) \in P^{+}$ and
$\bow(\mu_{1}-\lambda_{2}) \in P^{+}$. Thus,
\begin{equation*}
\bow(\lambda_{1}-\mu_{1})(h_{i}) \bow(\mu_{1}-\lambda_{2})(h_{i}) = 0
\quad \text{for all $i \in I=\bigl\{1,\,2\bigr\}$},
\end{equation*}
which implies that $\bow\mu_{1}(h_{i})=\bow\lambda_{1}(h_{i})$
or $\bow\mu_{1}(h_{i})=\bow\lambda_{2}(h_{i})$ for each $i=1,\,2$.
Remark that $\blambda$ is not the maximal element, since
$\blambda \prec \bmu$. Therefore it follows that the only
possibilities are
\begin{equation*}
\bow\mu_1=(\bow\lambda_2)(h_1)\omega_1+ (\bow\lambda_1)(h_2)\omega_2
\quad \text{\rm or} \quad
\bow\mu_1=(\bow\lambda_1)(h_1)\omega_1+ (\bow\lambda_2)(h_2)\omega_2.
\end{equation*}
In turn this implies that
\begin{align*}
\bmu & \sim
\bigl( (\bow\lambda_2)(h_1)\bow^{-1}\omega_1+(\bow\lambda_1)(h_2)\bow^{-1}\omega_2, \
(\bow\lambda_1)(h_1)\bow^{-1}\omega_1 +(\bow\lambda_2)(h_2)\bow^{-1}\omega_2 \bigr) \\
& = (\lambda_1 - \bow(\lambda_1-\lambda_2)(h_1)\bow^{-1} \omega_1, \
 \lambda_2 + \bow(\lambda_1-\lambda_2)(h_1)\bow^{-1} \omega_1) \\
& = (\lambda_1 - \bow(\lambda_1-\lambda_2)(h_1)\bow \omega_1, \
 \lambda_2 + \bow(\lambda_1-\lambda_2)(h_1)\bow \omega_1),
\end{align*}
as required; here we use the fact that
\begin{equation*}
\begin{cases}
\lambda_{1}=
(\bow\lambda_1)(h_1)\bow^{-1}\omega_1+(\bow\lambda_1)(h_2)\bow^{-1}\omega_2, \\[1mm]
\lambda_{2}=
(\bow\lambda_2)(h_1)\bow^{-1}\omega_1+(\bow\lambda_2)(h_2)\bow^{-1}\omega_2,
\end{cases}
\end{equation*}
and then the fact that $\bow^{-1}=\bow$.
\end{proof}

\subsection{} \label{nak}
%
We now recall from  \cite{KN1994}, \cite{N2002}
a tableaux  description of  tensor product multiplicities.
Given $\lambda \in P^+$, let $\BT(\lambda)\subset \bz_+^5$ be
the subset consisting of tuples
$(s_{1,1}, s_{1,2}, s_{1,3}, s_{2,2}, s_{2,3})$ satisfying the conditions,
\begin{equation} \label{sst1}
 s_{1,1} + s_{1,2} + s_{1,3} = \lambda(h_1) + \lambda(h_2), \qquad
 s_{2,2} + s_{2,3} = \lambda(h_2),
\end{equation}
\begin{equation} \label{sst2}
 s_{1,1} \geq s_{2,2}, \qquad
 s_{1,1} + s_{1,2} \geq s_{2,2} + s_{2,3}.
\end{equation}
Then, it is proved in \cite{KN1994} that
$$\dim V(\lambda)=\#\BT(\lambda).$$
(This is just the number of semistandard tableaux
with entries from $\{1,2,3\}$ of shape $\lambda$,
{where $s_{i,j}$ corresponds to
the number of $j$ in the $i$-th row}).
Moreover, if $\nu \in P$ and we set
$$\BT(\lambda)^\nu=\{(s_{i,j})\in \BT(\lambda):
{ s_{1,1} - s_{1,2} - s_{2,2} = \nu(h_1), \,
s_{1,2} + s_{2,2} - s_{1,3} - s_{2,3} = \nu(h_2)}\},$$
then
$$\dim V(\lambda)_\nu =\# \BT(\lambda)^\nu.$$
In particular, { if $(s_{i,j}), (t_{i,j}) \in \BT(\lambda)^\nu$,}
then they satisfy
\begin{equation} \label{sameweight}
s_{1,1}=t_{1,1}, \ \ s_{1,2}+s_{2,2}=t_{1,2}+t_{2,2}, \ \ \
s_{1,3}+s_{2,3}= t_{1,3}+t_{2,3}.
\end{equation}

Suppose now that $\mu, \nu \in P^+$, then (\cite{N2002})
\begin{equation}
\dim\Hom_{\lie g}(V(\nu), V(\mu)\otimes V(\lambda)) = \# \BT(\lambda)_\mu^\nu,
\end{equation}
where $\BT(\lambda)_\mu^\nu$ is the subset of
$\BT(\lambda)^\nu$, consisting of
$(s_{1,1}, s_{1,2}, s_{1,3}, s_{2,2}, s_{2,3})\in \BT(\lambda)$
satisfying the following additional constraints:
\begin{equation} \label{LR1}
 s_{1,2}\le \mu(h_1), \qquad
 s_{1,3}\le \mu(h_2), \qquad
 s_{2,3}+s_{1,3}\le \mu(h_2)+s_{1,2},
\end{equation}
\begin{equation} \label{LR2}
\nu(h_1)+\nu(h_2)=\mu(h_1)+\mu(h_2)+s_{1,1}-s_{1,3}-s_{2,3},
\end{equation}
\begin{equation} \label{LR3}
\nu(h_2)=\mu(h_2)+s_{1,2} + s_{2,2} - s_{1,3} - s_{2,3}.
\end{equation}
As a consequence, we see that to prove Theorem \ref{mainthm}(iii),
we must prove that if $\blambda,\bmu\in P^+(\lambda,2)$, then
\begin{equation} \label{equivthmk2}
\blambda \preceq \bmu \implies
\# \BT(\lambda_2)_{\lambda_1}^\nu \le \# \BT(\mu_2)_{\mu_1}^\nu
\quad \text{for each $\nu \in P^+$}.
\end{equation}
This is done in the rest of the section.

\subsection{}

Keep the notation in Proposition~\ref{cover-elements}.
In this subsection, we prove that Theorem~\ref{mainthm}\,(iii) is true
if $\blambda$ and $\bmu$ satisfy the conditions \eqref{limitlambda} and
\eqref{limitbmu} with $\bow = \id$ or $\bow= \bos_2$. By \eqref{equivthmk2},
it suffices to find an injective map from
\begin{equation} \label{eq:t1k}
\BT(\lambda_2)_{\lambda_1}^\nu \hookrightarrow
\BT(\lambda_2+a\bow\omega_1)_{\lambda_1-a\bow\omega_1}^\nu=
\BT(\lambda_2+a\omega_1)_{\lambda_1-a\omega_1}^\nu
\end{equation}
for each $\nu \in P^{+}$, where $a$ equals either
$1$ or $\bow(\lambda_1 - \lambda_2)(h_1)$; note that
$\bow(\lambda_1 - \lambda_2)(h_1) > 0$ by
the second equality of \eqref{limitlambda}.
This is obtained as a corollary of the following proposition.

%
\begin{prop} \label{t1k}
Keep the notation above. For each $\nu \in P^{+}$,
there exists $0 \le \ell \le a$ such that for all
$(s_{i,j}) \in \BT(\lambda_2)_{\lambda_1}^\nu$,
we have
\begin{equation*}
a-\ell \le s_{1,2} \le \lambda_1(h_1) - \ell, \qquad
s_{1,3} \le \lambda_1(h_2) - (a - \ell), \qquad
s_{2,3} \ge a - \ell.
\end{equation*}
\end{prop}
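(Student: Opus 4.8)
The plan is to exploit the fact that, for a fixed $\nu$, the set $\BT(\lambda_2)_{\lambda_1}^{\nu}$ is essentially one–dimensional. First I would record that all of its elements have the same weight, so by \eqref{sameweight} the three quantities $s_{1,1}$, $s_{1,2}+s_{2,2}$ and $s_{1,3}+s_{2,3}$ are constant on the fibre. Writing $P=s_{1,1}$, $M=s_{1,2}+s_{1,3}$ and $N=s_{2,3}-s_{1,2}$ for these constants, every tableau is determined by the single integer $x:=s_{1,2}$ via $s_{1,3}=M-x$, $s_{2,3}=N+x$, $s_{2,2}=(s_{1,2}+s_{2,2})-x$ and $s_{1,1}=P$. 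Thus $\BT(\lambda_2)_{\lambda_1}^{\nu}$ is the set of integers $x$ in an interval $[x_{\min},x_{\max}]$, where $x_{\max}=\min\{\lambda_1(h_1),\,M,\,s_{1,2}+s_{2,2}\}$ and $x_{\min}$ is the maximum of the lower bounds forced by nonnegativity of the entries, by \eqref{sst2}, and by \eqref{LR1}; in particular \eqref{LR1} and nonnegativity already give $x_{\min}\ge\theta:=\max\{0,\,M-\lambda_1(h_2),\,-N\}$.

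Since $s_{1,2}$ and $s_{2,3}$ are increasing in $x$ and $s_{1,3}$ is decreasing, each of the three asserted inequalities is monotone in $x$ and so holds on the whole fibre as soon as it holds at the relevant endpoint: the lower bounds on $s_{1,2},s_{2,3}$ and the upper bound on $s_{1,3}$ need only be checked at $x_{\min}$, and the upper bound on $s_{1,2}$ only at $x_{\max}$. Rewriting these four endpoint inequalities as conditions on $\ell$, the proposition becomes equivalent to the existence of an integer $\ell$ with $L\le\ell\le\min\{a,\ \lambda_1(h_1)-x_{\max}\}$, where $L=\max\{0,\ a-x_{\min}+\theta\}$. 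The bound $L\le a$ is immediate from $x_{\min}\ge\theta$, so (apart from the empty–fibre case, which is vacuous) everything reduces to the single inequality
\begin{equation*}
a+(x_{\max}-x_{\min})+\theta\ \le\ \lambda_1(h_1). \tag{$\star$}
\end{equation*}

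Proving $(\star)$ is where the real work lies, and it is the step where the hypotheses \eqref{limitlambda} must enter. I would substitute the explicit expressions for $x_{\max}$ and $x_{\min}$ and split into cases according to which defining inequality of $\BT(\lambda_2)_{\lambda_1}^{\nu}$ is active at each endpoint (equivalently, which term realizes the outer $\min$ in $x_{\max}$, the outer $\max$ in $x_{\min}$, and which term realizes $\theta$). In each branch the left–hand side of $(\star)$ should collapse, after cancelling $P,M,N$, to an expression whose comparison with $\lambda_1(h_1)$ is governed precisely by $a\le\bow(\lambda_1-\lambda_2)(h_1)$ together with $\bow(\lambda_1-\lambda_2)\in P^{+}$. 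Here one uses that, in the situation of Proposition~\ref{cover-elements} with $\bow\in\{\id,\bos_2\}$, the hypothesis \eqref{limitlambda} translates into explicit inequalities among $\lambda_1(h_1),\lambda_1(h_2),\lambda_2(h_1),\lambda_2(h_2)$: for $\bow=\id$ it reads $\lambda_1-\lambda_2\in P^{+}$ and $a\le\lambda_1(h_1)-\lambda_2(h_1)$, and for $\bow=\bos_2$ one obtains the analogous statement after applying $\bos_2$, using $\bos_2 h_1=h_1+h_2$.

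The main obstacle I anticipate is exactly this width estimate $(\star)$. The naive bounds $x_{\max}\le\lambda_1(h_1)$ and $x_{\min}\ge\theta$ fall short of $(\star)$ by exactly $a$, so it cannot be proved by a single crude inequality; the deficit $a$ must be extracted — in each endpoint configuration, for each of the two choices of $\bow$, and for each admissible value $a\in\{1,\,\bow(\lambda_1-\lambda_2)(h_1)\}$ — from the assumption that $a$ does not exceed $\bow(\lambda_1-\lambda_2)(h_1)$, the saving sometimes coming from $x_{\max}$ being strictly below $\lambda_1(h_1)$ and sometimes from $x_{\min}$ being strictly above $\theta$. Once $(\star)$ is established, taking $\ell=L$ (indeed any integer in $[L,\ \min\{a,\lambda_1(h_1)-x_{\max}\}]$) completes the proof, and this $\ell$ depends only on $\nu$ through $P,M,N,x_{\min},x_{\max}$, as the statement requires; moreover the three resulting inequalities are precisely what is needed to manufacture the injection \eqref{eq:t1k}.
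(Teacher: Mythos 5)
Your reduction is correct as far as it goes: the constraints \eqref{sst1}, \eqref{sst2}, \eqref{LR1} together with \eqref{sameweight} do show that every element of $\BT(\lambda_2)_{\lambda_1}^{\nu}$ is determined by $x=s_{1,2}$, that the admissible $x$ form an integer interval $[x_{\min},x_{\max}]$, and that the proposition is equivalent to the nonemptiness of the interval $\bigl[L,\ \min\{a,\lambda_1(h_1)-x_{\max}\}\bigr]$, i.e.\ to your inequality $(\star)$ when $L>0$. But there is a genuine gap: $(\star)$ is never proved. You yourself identify it as ``where the real work lies,'' observe that the crude bounds fall short by exactly $a$, and then only describe the case analysis you \emph{would} perform. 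All of the actual content of the proposition lives in that step --- it is the only place where the hypotheses \eqref{limitlambda}, the restriction $\bow\in\{\id,\bos_2\}$, and the two admissible values of $a$ enter --- so as written the argument establishes an equivalent reformulation of the statement rather than the statement itself. You have not even extracted from \eqref{limitlambda} the two concrete inequalities that drive everything, namely $\lambda_1(h_1)+\lambda_1(h_2)-a \ge \lambda_2(h_1)+\lambda_2(h_2)$ and $\lambda_1(h_1)-\lambda_2(h_1)\ge a$ (the paper's \eqref{bounds}), which is the first thing any completion of your plan would need.

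For comparison, the paper's proof is essentially your argument with the missing half carried out: it first derives \eqref{bounds} from \eqref{limitlambda} (treating $\bow=\id$ and $\bow=\bos_2$ separately), then takes $\ell$ maximal in $[0,a]$ with $t_{1,2}\le\lambda_1(h_1)-\ell$ for all tableaux --- which is exactly the upper endpoint $\min\{a,\lambda_1(h_1)-x_{\max}\}$ of your admissible range --- fixes a tableau attaining $s_{1,2}=\lambda_1(h_1)-\ell$, and then rules out violations of the remaining three inequalities by three contradiction arguments, each consuming one inequality of \eqref{bounds} via \eqref{sst1}, \eqref{sst2} and \eqref{sameweight}. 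Those three contradictions are precisely the verification that this $\ell$ is $\ge L$, i.e.\ they are your $(\star)$ in disguise. So your packaging is arguably cleaner, but to have a proof you must either run your endpoint case analysis on $x_{\min},x_{\max},\theta$ to completion or reproduce the paper's three contradiction arguments; until then the proposal is a correct reduction, not a proof.
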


\begin{proof}
First, let us show that
\begin{equation}\label{bounds}
\lambda_1(h_1)+\lambda_1(h_2)-a \ge \lambda_2(h_1)+\lambda_2(h_2), \quad
\lambda_1(h_1)-\lambda_2(h_1) \ge a.
\end{equation}
Indeed, since $\bow(\lambda_{1}-\lambda_{2})(h_{1}) \ge a$
by the definition of $a$, we have
\begin{equation*}
\begin{cases}
(\lambda_{1}-\lambda_{2})(h_{1}) \ge a & \text{if $\bow=\id$}, \\[1.5mm]
(\lambda_{1}-\lambda_{2})(h_{1}+h_{2}) \ge a & \text{if $\bow=\bos_{2}$},
\end{cases}
\end{equation*}
which implies the second (resp., first) inequality of \eqref{bounds}
if $\bow=\id$ (resp., $\bow=\bos_{2}$). Also, since
$\bow(\lambda_{1}-\lambda_{2}) \in P^{+}$, we see that
\begin{equation*}
\bow(\lambda_{1}-\lambda_{2})(h_{1}+h_{2}) \ge \bow(\lambda_{1}-\lambda_{2})(h_{1}) \ge a.
\end{equation*}
Thus we get
\begin{equation*}
\begin{cases}
(\lambda_{1}-\lambda_{2})(h_{1}+h_{2}) \ge a & \text{if $\bow=\id$}, \\[1.5mm]
(\lambda_{1}-\lambda_{2})(h_{1}) \ge a & \text{if $\bow=\bos_{2}$},
\end{cases}
\end{equation*}
which implies the first (resp., second) inequality of \eqref{bounds}
if $\bow=\id$ (resp., $\bow=\bos_{2}$).

By \eqref{LR1}, we have $t_{1,2} \leq \lambda_1(h_1)$ for all
$(t_{i,j}) \in \BT(\lambda_2)_{\lambda_1}^{\nu}$. Thus we can choose
$0 \leq \ell \leq a$ maximal such that
for all $(t_{i,j}) \in \BT(\lambda_2)_{\lambda_1}^{\nu}$,
$$t_{1,2} \leq \lambda_1(h_1) - \ell.$$
In particular, we can and do fix an element
$(s_{i,j}) \in \BT(\lambda_2)^\nu_{\lambda_1}$ with $s_{1,2}=\lambda_1(h_1)-\ell$.

Suppose that there exists $(t_{i,j}) \in \BT(\lambda_2)_{\lambda_1}^\nu$,
with $t_{1,3} > \lambda_1(h_2) + \ell - a$. Then, \eqref{sameweight} gives
$$t_{2,2} + t_{1,2} = s_{2,2} + s_{1,2} \geq s_{1,2} = \lambda_1(h_1) - \ell.$$
This implies that
\begin{align*}
\lambda_2(h_1) + \lambda_2(h_2)
 & = t_{1,1} + t_{1,2} + t_{1,3} \quad \text{by the first equality of \eqref{sst1}} \\
 & \geq t_{2,2} + t_{1,2} + t_{1,3} \quad \text{by the first inequality of \eqref{sst2}} \\
 & \geq \lambda_1(h_1) - \ell  + t_{1,3} \\
 & > \lambda_1(h_1) - \ell + \lambda_1(h_2) + \ell - a \\
 & = \lambda_1(h_1) + \lambda_1(h_2) - a.
\end{align*}
This contradicts the inequality
$\lambda_2(h_1) + \lambda_2(h_2) \le
\lambda_1(h_1) + \lambda_1(h_2) - a$ obtained in \eqref{bounds}.

Suppose now that there exists $(t_{i,j}) \in \BT(\lambda_2)_{\lambda_1}^\nu$
with $t_{1,2}< a-\ell$. Then we have
\begin{align*}
\lambda_2(h_2)
  & = t_{2,2} + t_{2,3} \quad \text{by the second equality of \eqref{sst1}} \\
  & \leq t_{1,1} + t_{1,2} \quad \text{by the second inequality of \eqref{sst2}} \\
  & < t_{1,1} + a - \ell = s_{1,1} + a - \ell
    \quad \text{by the first equality of \eqref{sameweight}} \\
  & \leq s_{1,1} + s_{1,3} + a - \ell.
\end{align*}
On the other hand, we have
\begin{align}
s_{1,1} + s_{1,3} + a - \ell
 & = \lambda_2(h_1) + \lambda_2(h_2) - s_{1,2} + a - \ell
   \quad \text{by the first equality of \eqref{sst1}} \nonumber \\
 & = \lambda_2(h_1) + \lambda_2(h_2) - (\lambda_1(h_1) - \ell) + a - \ell \nonumber \\
 & = \lambda_2(h_1) + \lambda_2(h_2) - \lambda_1(h_1) + a. \label{proof5-4}
\end{align}
Combining the two, gives
$$\lambda_2(h_2) < \lambda_2(h_1) + \lambda_2(h_2) - \lambda_1(h_1) + a
  \text{ and so } \lambda_1(h_1) - \lambda_2(h_1) < a,$$
which contradicts the second inequality of \eqref{bounds}.

Finally suppose that there exists
$(t_{i,j})\in \BT(\lambda_2)_{\lambda_1}^\nu$ with $t_{2,3} < a-\ell$.
Then we have
\begin{align*}
\lambda_2(h_2) & = t_{2,2} + t_{2,3}
  \leq t_{1,1} + t_{2,3} < t_{1,1} + a - \ell =
  s_{1,1} + a - \ell \leq s_{1,1} + s_{1,3} + a - \ell.
\end{align*}
Since $s_{1,1} + s_{1,3} + a - \ell = \lambda_2(h_1) + \lambda_2(h_2) - \lambda_1(h_1) + a$
by \eqref{proof5-4}, we get
$$\lambda_2(h_2)<\lambda_2(h_1)+\lambda_2(h_2)-\lambda_1(h_1)+a,$$
which again contradicts the second inequality of \eqref{bounds}.
\end{proof}

The following corollary is now trivially checked using Subsection~\ref{nak}.
Thus we have proved that Theorem~\ref{mainthm}(iii) is true
if $\blambda$ and $\bmu$ satisfy the conditions \eqref{limitlambda} and
\eqref{limitbmu} with $\bow = \id$ or $\bow= \bos_2$ (see \eqref{eq:t1k}).
%
%
\begin{cor} \label{cor-t1k}
Keep the notation and setting in the proposition above.
Let $\nu \in P^{+}$, and let $\ell$ be as in the proposition above.
Then, the assignment $(s_{i,j}) \mapsto (s'_{i,j})$,
$$s'_{1,1} = s_{1,1} + a, \qquad s'_{1,2} = s_{1,2} - (a -  \ell), \qquad
  s'_{1,3} = s_{1,3} + (a - \ell)$$
$$s'_{2,2} = s_{2,2} + (a -\ell), \qquad
  s'_{2,3} = s_{2,3} - (a - \ell),$$
defines an injective map
$\BT(\lambda_2)^\nu_{\lambda_1} \hookrightarrow
\BT(\lambda_2+ a\omega_1)^\nu_{\lambda_1-a\omega_1}$.
\end{cor}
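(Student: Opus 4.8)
The plan is to verify directly that the prescribed assignment $(s_{i,j})\mapsto(s'_{i,j})$ carries $\BT(\lambda_2)^\nu_{\lambda_1}$ into $\BT(\lambda_2+a\omega_1)^\nu_{\lambda_1-a\omega_1}$; injectivity will then be automatic, because for a fixed $\nu$ the five increments $+a,\,-(a-\ell),\,+(a-\ell),\,+(a-\ell),\,-(a-\ell)$ are constants (the integer $\ell$ depends only on $\nu$), so on the fibre over $\nu$ the map is a translation. Hence the only substance is well-definedness, i.e.\ checking that $(s'_{i,j})$ satisfies the defining relations \eqref{sst1}, \eqref{sst2}, \eqref{LR1}, \eqref{LR2}, \eqref{LR3} for the new shape $\lambda_2+a\omega_1$ and new first factor $\lambda_1-a\omega_1$, for which $(\lambda_2+a\omega_1)(h_1)=\lambda_2(h_1)+a$, $(\lambda_1-a\omega_1)(h_1)=\lambda_1(h_1)-a$, while the $h_2$-values are unchanged since $\omega_1(h_2)=0$.

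First I would dispose of the routine relations. By direct substitution the two equalities of \eqref{sst1} and both weight equations \eqref{LR2}, \eqref{LR3} hold identically: the $\pm(a-\ell)$ shifts cancel, and the $+a$ in $s'_{1,1}$ is exactly absorbed by the replacements $\lambda_2(h_1)\to\lambda_2(h_1)+a$ and $\lambda_1(h_1)\to\lambda_1(h_1)-a$. The two inequalities of \eqref{sst2} follow from the original ones together with $\ell\ge0$, and nonnegativity of $s'_{1,2}$ and $s'_{2,3}$ is precisely the pair of lower bounds $s_{1,2}\ge a-\ell$ and $s_{2,3}\ge a-\ell$ of Proposition~\ref{t1k} (the other three entries only increase). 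Finally the first two inequalities of \eqref{LR1} for the image, $s'_{1,2}\le(\lambda_1-a\omega_1)(h_1)$ and $s'_{1,3}\le(\lambda_1-a\omega_1)(h_2)$, are exactly $s_{1,2}\le\lambda_1(h_1)-\ell$ and $s_{1,3}\le\lambda_1(h_2)-(a-\ell)$ from Proposition~\ref{t1k}.

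The one genuinely nontrivial point, and the step I expect to be the main obstacle, is the third inequality of \eqref{LR1} for the image. After substitution it reads $s_{1,3}+s_{2,3}\le\lambda_1(h_2)+s_{1,2}-(a-\ell)$, which is stronger by $a-\ell$ than the original third inequality and does \emph{not} follow from the three bounds of Proposition~\ref{t1k} alone. To prove it I would assemble three ingredients: (i) the elementary tableau estimate $s_{1,3}\le\lambda_2(h_1)$, coming from \eqref{sst1} and \eqref{sst2} via $s_{1,3}=\lambda_2(h_1)+\lambda_2(h_2)-s_{1,1}-s_{1,2}\le\lambda_2(h_1)+\lambda_2(h_2)-s_{2,2}-s_{2,3}=\lambda_2(h_1)$; (ii) the inequality $\lambda_2(h_1)+\lambda_2(h_2)\le\lambda_1(h_1)+\lambda_1(h_2)-a$ already established in \eqref{bounds}; and (iii) the fact that $s_{1,2}+s_{2,2}$ is constant on $\BT(\lambda_2)^\nu_{\lambda_1}$ by \eqref{sameweight}, so that, evaluating it on the element with $s_{1,2}=\lambda_1(h_1)-\ell$ made available by the maximal choice of $\ell$, one has $s_{1,2}+s_{2,2}\ge\lambda_1(h_1)-\ell$.

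Granting these, the estimate is short. Writing $s_{2,3}=\lambda_2(h_2)-s_{2,2}$ from \eqref{sst1} and using (i), (ii) gives
\[
s_{1,3}+s_{2,3}\le\lambda_2(h_1)+\lambda_2(h_2)-s_{2,2}\le\lambda_1(h_1)+\lambda_1(h_2)-a-s_{2,2},
\]
while (iii) yields $\lambda_1(h_1)-s_{2,2}=s_{1,2}+\bigl(\lambda_1(h_1)-(s_{1,2}+s_{2,2})\bigr)\le s_{1,2}+\ell$, whence $s_{1,3}+s_{2,3}\le\lambda_1(h_2)+s_{1,2}-(a-\ell)$, as needed. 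Note that if $\ell=a$ the shift $a-\ell$ vanishes and the desired inequality is just the original third inequality, so one may assume $\ell<a$, which is exactly the case in which the saturating element in (iii) exists. This finishes the proof that the image lies in $\BT(\lambda_2+a\omega_1)^\nu_{\lambda_1-a\omega_1}$, and with injectivity already noted the corollary follows.
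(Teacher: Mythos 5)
Your proof is correct and takes the same route the paper intends: a direct verification that the shifted tuple satisfies \eqref{sst1}, \eqref{sst2} and \eqref{LR1}--\eqref{LR3} for the new shape $\lambda_2+a\omega_1$ and new first factor $\lambda_1-a\omega_1$, with injectivity coming from the fact that on each fibre over $\nu$ the map is a translation (the paper's own proof is just the assertion that this is ``trivially checked'' from Subsection~\ref{nak}). You have moreover correctly isolated the one genuinely non-routine point --- the third inequality of \eqref{LR1} for the image, which is stronger by $a-\ell$ than the original --- and your argument for it, combining $s_{1,3}\le\lambda_2(h_1)$, the first inequality of \eqref{bounds}, and the bound $s_{1,2}+s_{2,2}\ge\lambda_1(h_1)-\ell$ obtained from \eqref{sameweight} via the element saturating the maximal choice of $\ell$ (available exactly when $\ell<a$, the only case where anything needs proving), is valid and supplies precisely the detail the paper leaves to the reader.
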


\subsection{}

Again, keep the notation in Proposition~\ref{cover-elements}.
In this subsection, we prove that Theorem~\ref{mainthm}\,(iii) is true
if $\blambda$ and $\bmu$ satisfy the conditions \eqref{limitlambda} and
\eqref{limitbmu} with $\bow = \bos_1$. By \eqref{equivthmk2},
it suffices to find an injective map from
\begin{equation} \label{eq:t2k}
\BT(\lambda_2)_{\lambda_1}^\nu \hookrightarrow
\BT(\lambda_2+a\bos_1\omega_1)_{\lambda_1-a\bos_1\omega_1}^\nu=
\BT(\lambda_2+a(\omega_2-\omega_1))_{\lambda_1-a(\omega_2-\omega_1)}^\nu
\end{equation}
for each $\nu \in P^{+}$, where $a$ equals either
$1$ or $\bos_1(\lambda_1 - \lambda_2)(h_1)$; note that
$\bos_1(\lambda_1 - \lambda_2)(h_1) > 0$ by
the second equality of \eqref{limitlambda}.
This is obtained as a corollary of the following proposition.
%
%
\begin{prop} \label{t2k}
For each $\nu\in P^+$, there exists $\ell \ge 0$ such that
for all $(s_{i,j})\in \BT(\lambda_2)^\nu_{\lambda_1}$,
$$s_{1,1}\ge s_{2,2}+\ell, \qquad s_{1,3}\ge a-\ell.$$
\end{prop}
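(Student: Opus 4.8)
The plan is to prove Proposition~\ref{t2k} in close parallel with Proposition~\ref{t1k}, the only essential difference being that the shift is now by $\bos_1\omega_1=\omega_2-\omega_1$ rather than by $\omega_1$, so that the two displayed inequalities play the role that the bounds on $s_{1,2}, s_{1,3}, s_{2,3}$ played there. Concretely, the two inequalities $s_{1,1}\ge s_{2,2}+\ell$ and $s_{1,3}\ge a-\ell$ are exactly the conditions needed for the assignment
$$s_{1,1}'=s_{1,1},\quad s_{1,2}'=s_{1,2}+(a-\ell),\quad s_{1,3}'=s_{1,3}-(a-\ell),\quad s_{2,2}'=s_{2,2}+\ell,\quad s_{2,3}'=s_{2,3}+(a-\ell)$$
to land in $\bz_+^5$ and to satisfy the first semistandardness inequality of \eqref{sst2} for the shifted shape; this is the map that will yield the injection \eqref{eq:t2k} in the following corollary. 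Thus the heart of the matter is to produce a single $\ell\ge 0$, depending only on $\nu$, for which both inequalities hold simultaneously for every tableau in $\BT(\lambda_2)_{\lambda_1}^\nu$.

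First I would record the analogue of \eqref{bounds} in the present case. Since $a\le\bos_1(\lambda_1-\lambda_2)(h_1)$ by the definition of $a$, and $\bos_1(\lambda_1-\lambda_2)\in P^+$ by \eqref{limitlambda}, a direct computation using the identities $\bos_1(\lambda_1-\lambda_2)(h_1)=(\lambda_2-\lambda_1)(h_1)$ and $\bos_1(\lambda_1-\lambda_2)(h_1+h_2)=(\lambda_1-\lambda_2)(h_2)$ yields
$$\lambda_2(h_1)-\lambda_1(h_1)\ge a,\qquad \lambda_1(h_2)-\lambda_2(h_2)\ge a.$$
These are the external inequalities that will drive every contradiction below, just as \eqref{bounds} did for $\bow\in\{\id,\bos_2\}$.

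Next I would define $\ell$ and settle the first inequality. Recall from \eqref{sameweight} that $s_{1,1}$ is constant on $\BT(\lambda_2)^\nu$, and that $s_{1,1}\ge s_{2,2}$ always holds by \eqref{sst2}. Hence I can choose $\ell\ge 0$ to be maximal with the property that $s_{2,2}\le s_{1,1}-\ell$ for every $(s_{i,j})\in\BT(\lambda_2)_{\lambda_1}^\nu$ (taking $\ell=0$ if this set is empty, in which case both assertions are vacuous); the first inequality then holds by construction, and I fix a tableau $(s_{i,j})$ attaining $s_{2,2}=s_{1,1}-\ell$. It remains to prove $s_{1,3}\ge a-\ell$ for all tableaux, which I would establish by contradiction: assuming some $(t_{i,j})\in\BT(\lambda_2)_{\lambda_1}^\nu$ has $t_{1,3}<a-\ell$, I would combine $t_{1,1}=s_{1,1}$ (from \eqref{sameweight}), the shape equalities \eqref{sst1}, the semistandardness inequalities \eqref{sst2}, the Littlewood--Richardson constraints \eqref{LR1}, and the two external bounds above to force a numerical contradiction, exactly mirroring the three contradiction computations in the proof of Proposition~\ref{t1k}.

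I expect the contradiction step to be the main obstacle. Unlike the $\bow\in\{\id,\bos_2\}$ case, where the shift direction $\omega_1$ decouples the relevant constraints, the direction $\omega_2-\omega_1$ mixes both fundamental coweights, so the maximality of $\ell$ may be witnessed by any one of several competing constraints among \eqref{sst2}, \eqref{LR1}, and the nonnegativity conditions on $s_{1,2}$ and $s_{2,3}$. Tracking which constraint is binding for the fixed extremal tableau, equivalently, locating the two endpoints of the one-parameter family $\BT(\lambda_2)_{\lambda_1}^\nu$ cut out by these inequalities, will require a short sub-case analysis, and it is precisely there that the two external bounds $\lambda_2(h_1)-\lambda_1(h_1)\ge a$ and $\lambda_1(h_2)-\lambda_2(h_2)\ge a$ must be deployed to close each case.
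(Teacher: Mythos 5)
Your setup coincides with the paper's: the same two external bounds $\lambda_2(h_1)-\lambda_1(h_1)\ge a$ and $\lambda_1(h_2)-\lambda_2(h_2)\ge a$ extracted from \eqref{limitlambda} via $\bos_1 h_1=-h_1$ and $\bos_1(h_1+h_2)=h_2$, the same choice of $\ell$ (maximal with $s_{1,1}\ge s_{2,2}+\ell$ on all of $\BT(\lambda_2)^\nu_{\lambda_1}$, with a witness tableau attaining equality), and the same plan of getting $s_{1,3}\ge a-\ell$ by contradiction through \eqref{sameweight}. But the one step that actually proves the proposition — the contradiction — is only announced, not carried out, and your closing paragraph suggests you do not yet see how to close it. It is not a sub-case analysis; it is a single chain of inequalities. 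Suppose $\ell<a$ (if $\ell\ge a$ there is nothing to prove) and some $(t_{i,j})$ has $t_{1,3}<a-\ell$. Fix $(s_{i,j})$ with $s_{1,1}=s_{2,2}+\ell$. Since $t_{1,1}=s_{1,1}$ by \eqref{sameweight} and both tableaux have first-row sum $\lambda_2(h_1)+\lambda_2(h_2)$ by \eqref{sst1}, we get $t_{1,2}+t_{1,3}=s_{1,2}+s_{1,3}$, hence
\begin{equation*}
\lambda_1(h_1)\ \ge\ t_{1,2}\ >\ s_{1,2}+s_{1,3}-(a-\ell)\ =\ \lambda_2(h_1)+\lambda_2(h_2)-s_{1,1}-(a-\ell)\ =\ \lambda_2(h_1)+\lambda_2(h_2)-s_{2,2}-a\ \ge\ \lambda_2(h_1)-a,
\end{equation*}
using \eqref{LR1} for the first inequality and $s_{2,2}\le\lambda_2(h_2)$ (from \eqref{sst1}) for the last. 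This contradicts $\lambda_2(h_1)-\lambda_1(h_1)\ge a$. Until you write something of this kind down, the proposal is a correct frame around an unproved core.

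Two further remarks on how your treatment diverges from the paper's. First, the paper does split into cases, but not where you anticipate: it first disposes of the situation in which some tableau saturates one of the constraints $t_{1,3}\le\lambda_1(h_2)$ or $t_{1,3}+t_{2,3}\le\lambda_1(h_2)+t_{1,2}$ of \eqref{LR1}; there it takes $\ell=0$ and proves $s_{1,3}\ge a$ directly from $s_{1,3}+s_{2,3}\ge\lambda_1(h_2)$, $s_{2,3}\le\lambda_2(h_2)$ and the chain $\lambda_1(h_2)-\lambda_2(h_2)\ge\lambda_2(h_1)-\lambda_1(h_1)\ge a$. For the two inequalities asserted in the proposition your uniform maximal-$\ell$ choice also works (the contradiction above does not use the case hypothesis), but the paper's specific choice of $\ell$ in that first case is what makes the remaining membership conditions in Corollary~\ref{cor-t2k} — for instance $s'_{1,3}\le(\lambda_1-a(\omega_2-\omega_1))(h_2)$, which amounts to $s_{1,3}\le\lambda_1(h_2)-\ell$ and is \emph{not} among the two inequalities of the proposition — come out correctly; so you cannot simply discard the case distinction if you intend the corollary to follow. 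Second, you describe $\BB$... rather, $\BT(\lambda_2)^\nu_{\lambda_1}$ as a one-parameter family whose endpoints must be located; this is more machinery than is needed, since the only structural input the argument uses is that $s_{1,1}$ and the partial sums in \eqref{sameweight} are constant on the set.
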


\begin{proof}
Suppose that there exists $(t_{i,j})\in \BT(\lambda_2)^\nu_{\lambda_1}$
such that either $t_{1,3}=\lambda_1(h_2)$ or $t_{1,3}+t_{2,3}=\lambda_1(h_2)+t_{1,2}$.
Then, $\ell=0$ satisfies the condition of the proposition.
Indeed, let $(s_{i,j}) \in \BT(\lambda_2)^\nu_{\lambda_1}$.
Then, $s_{1,1} \geq s_{2,2} + 0$ is true
by the first inequality of \eqref{sst2}. Also we see by
the third equality of \eqref{sameweight} that
$$
 s_{1,3}+s_{2,3}=t_{1,3}+t_{2,3}\ge \lambda_1(h_2).
$$
Since $s_{2,3}\le \lambda_2(h_2)$ by \eqref{sst2}, and
$\lambda_1(h_2)-\lambda_2(h_2) + \lambda_1(h_1) - \lambda_2(h_1) \ge 0$ by
the fact that $\bos_{1}(\lambda_{1}-\lambda_{2})(h_{2}) \ge 0$
(recall that $\bos_{1}(\lambda_{1}-\lambda_{2})(h_{2}) \in P^{+}$), we get
$$
 s_{1,3}\ge \lambda_1(h_2)-s_{2,3} \ge
 \lambda_1(h_2)-\lambda_2(h_2) \ge
 \lambda_2(h_1)-\lambda_1(h_1)\ge a = a + 0.
$$

Consider now the case when for all $(t_{i,j}) \in \BT(\lambda_2)^\nu_{\lambda_1}$,
both of $t_{1,3} < \lambda_1(h_2)$ and $t_{1,3}+t_{2,3} < \lambda_1(h_2)+t_{1,2}$ hold.
Since $t_{1,1}\ge t_{2,2}$ by \eqref{sst1},
we can choose $\ell \ge 0$ minimal with the property
that $t_{1,1}\ge t_{2,2}+\ell$ for all $(t_{i,j}) \in \BT(\lambda_2)^\nu_{\lambda_1}$.
If $\ell \ge a$, then the statement of the proposition is trivially true.
Assume now that $\ell < a$, and
suppose that there exists $(t_{i,j})$ with $t_{1,3} < a-\ell$.
Fix $(s_{i,j}) \in \BT(\lambda_2)^{\nu}_{\lambda_1}$
such that $s_{1,1}=s_{2,2}+\ell$.
Since both of $(s_{i,j})$ and $(t_{i,j})$ are
elements of $\BT(\lambda_2)^{\nu}$, we have
by \eqref{sst1} and \eqref{sameweight}
$$
 t_{1,2} + (a - \ell) > t_{1,2} + t_{1,3} =
 \lambda_{2}(h_{1})+\lambda_{2}(h_{2})-t_{11}=
 \lambda_{2}(h_{1})+\lambda_{2}(h_{2})-s_{11}=
 s_{1,2} + s_{1,3}.
$$
Hence we get
\begin{align*}
\lambda_1(h_1)
 & \geq t_{1,2} \quad
   \text{by the first inequality of \eqref{LR1}} \\
 & > s_{1,2} +s_{1,3} - (a - \ell) \\
 & = (\lambda_2(h_1) + \lambda_2(h_2) - s_{1,1}) + \ell - a \quad
   \text{by the first equality of \eqref{sst1}} \\
 & = \lambda_2(h_1) + \lambda_2(h_2) - s_{2,2}  - a \\
 & \geq \lambda_2(h_1) + \lambda_2(h_2) - \lambda_2(h_2)  - a \quad
   \text{by the second equality of \eqref{sst1}} \\
 & = \lambda_2(h_1) - a.
\end{align*}
So, $\lambda_1(h_1) > \lambda_2(h_1) - a$, which gives $a > \lambda_2(h_1) - \lambda_1(h_1)$, which is a contradiction.
Hence $t_{1,3} \geq a - \ell$ for all $(t_{i,j}) \in \BT(\lambda_2)^\nu_{\lambda_1}$
and the proof is complete.
\end{proof}

The following corollary is now trivially checked using Subsection~\ref{nak}.
Thus we have proved that Theorem~\ref{mainthm}(iii) is true
if $\blambda$ and $\bmu$ satisfy the conditions \eqref{limitlambda} and
\eqref{limitbmu} with $\bow =\bos_{1}$ (see \eqref{eq:t2k}).
%
%
\begin{cor} \label{cor-t2k}
Keep the notation and setting in the proposition above.
Let $\nu \in P^{+}$, and let $\ell$ be as in the proposition above. Then, the assignment
$(s_{i,j}) \mapsto (s'_{i,j})$,
$$s'_{1,1} = s_{1,1}, \qquad s'_{1,2} = s_{1,2} + (a - \ell), \qquad s'_{1,3} = s_{1,3} - (a - \ell)$$
$$s'_{2,2} = s_{2,2} + \ell, \qquad s'_{2,3} = s_{2,3} + (a - \ell),$$
defines an injective map $\BT(\lambda_2)^\nu_{\lambda_1} \hookrightarrow
\BT(\lambda_2 + a(\omega_2-\omega_1))^\nu_{\lambda_1 - a(\omega_2-\omega_1)}$.
\end{cor}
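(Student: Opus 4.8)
The plan is to verify directly that the stated assignment $(s_{i,j}) \mapsto (s'_{i,j})$ carries $\BT(\lambda_2)^\nu_{\lambda_1}$ into $\BT(\lambda_2+a(\omega_2-\omega_1))^\nu_{\lambda_1-a(\omega_2-\omega_1)}$ and is injective; by \eqref{equivthmk2} and \eqref{eq:t2k} this is exactly what is required. Write $\tilde\lambda_2=\lambda_2+a(\omega_2-\omega_1)$ and $\tilde\lambda_1=\lambda_1-a(\omega_2-\omega_1)$, so that $\tilde\lambda_2(h_1)=\lambda_2(h_1)-a$, $\tilde\lambda_2(h_2)=\lambda_2(h_2)+a$, and symmetrically for $\tilde\lambda_1$; note $(\omega_2-\omega_1)(h_1+h_2)=0$, so total sizes are preserved.

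First I would dispose of the equalities, which are the ``trivial'' verifications alluded to in Subsection~\ref{nak}. The identities \eqref{sst1} for $\tilde\lambda_2$ hold because the shifts add $+a$ to $s_{2,2}+s_{2,3}$ and leave $s_{1,1}+s_{1,2}+s_{1,3}$ unchanged, matching the change of shape. The weight identities \eqref{LR2} and \eqref{LR3} for $(\tilde\lambda_1,\nu)$ reduce, after substitution, to the original \eqref{LR2} and \eqref{LR3} for $(\lambda_1,\nu)$: one checks that $s'_{1,1}-s'_{1,3}-s'_{2,3}=s_{1,1}-s_{1,3}-s_{2,3}$ and $s'_{1,2}+s'_{2,2}-s'_{1,3}-s'_{2,3}=s_{1,2}+s_{2,2}-s_{1,3}-s_{2,3}+a$, and the extra $a$ precisely absorbs the passage from $\lambda_1$ to $\tilde\lambda_1$.

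The substance lies in the inequalities. The first semistandardness inequality $s'_{1,1}\ge s'_{2,2}$ and the nonnegativity $s'_{1,3}\ge 0$ are exactly the two bounds $s_{1,1}\ge s_{2,2}+\ell$ and $s_{1,3}\ge a-\ell$ furnished by Proposition~\ref{t2k}; this is the whole reason that proposition was proved. For the remaining requirements -- the second semistandardness inequality $s'_{1,1}+s'_{1,2}\ge s'_{2,2}+s'_{2,3}$ and the three constraints \eqref{LR1} for $\tilde\lambda_1$ -- I would rewrite each, using \eqref{sst1} together with the constancy of $s_{1,1}$, of $s_{1,2}+s_{2,2}$, and of $s_{1,3}+s_{2,3}$ on the weight fiber \eqref{sameweight}, as an upper bound on $s_{1,3}$ of the shape $s_{1,3}\le C-\ell$. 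Because $\ell$ is read off from the extremal tableau in Proposition~\ref{t2k}, these bounds are governed by the same extremal data; the case split in the proof of that proposition (whether some tableau saturates an \eqref{LR1} inequality, forcing $\ell=0$, or none does) is what should make them go through.

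Injectivity is then immediate: for a fixed $\nu$ the integers $a$ and $\ell$ are constants independent of the individual tableau, so the assignment is a fixed affine shift of the tuple and admits an explicit left inverse. The step I expect to be the real obstacle is the second semistandardness inequality. Unlike in Corollary~\ref{cor-t1k}, where the prescribed shifts add a nonnegative multiple of $\ell$ to the left-hand side, here the analogous computation gives $s'_{1,1}+s'_{1,2}-s'_{2,2}-s'_{2,3}=(s_{1,1}+s_{1,2}-s_{2,2}-s_{2,3})-\ell$, so the shift \emph{subtracts} $\ell$; the inequality can hold only when $\ell$ is small enough relative to the largest value of $s_{1,3}$ occurring in the fiber. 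Making precise that the $\ell$ produced by Proposition~\ref{t2k} does lie in the admissible range -- equivalently, that its two bounds cannot be simultaneously tight in an incompatible way -- is the delicate heart of the argument, and is where I would concentrate most of the effort.
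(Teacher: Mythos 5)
Your setup is the right one and matches what the paper intends: the map is a fixed affine shift, so injectivity is immediate, the equalities \eqref{sst1}, \eqref{LR2}, \eqref{LR3} for the shifted shape and the weight bookkeeping are routine substitutions (your computations there are correct), and the two bounds of Proposition~\ref{t2k} are exactly what give $s'_{1,1} \ge s'_{2,2}$ and $s'_{1,3} \ge 0$. The paper itself disposes of the corollary with the single sentence that it is ``trivially checked using Subsection~\ref{nak}'', so in spirit you are reproducing its argument.

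The problem is that you stop precisely where the proof has to happen. After the shift, the second inequality of \eqref{sst2} and the second and third inequalities of \eqref{LR1} each acquire an extra $-\ell$: using \eqref{sst1} they become $s_{1,3} \le \lambda_2(h_1)-\ell$, $s_{1,3} \le \lambda_1(h_2)-\ell$, and $s_{2,3}+s_{1,3} \le \lambda_1(h_2)+s_{1,2}-\ell$, none of which is among the two conclusions of Proposition~\ref{t2k}. (Contrast Corollary~\ref{cor-t1k}, where Proposition~\ref{t1k} supplies \emph{all} of the tightened bounds needed.) You correctly diagnose this --- you even compute the $-\ell$ defect explicitly --- but then only assert that the bounds are ``governed by the same extremal data'' and that making this precise ``is where I would concentrate most of the effort.'' That is an announcement of the remaining work, not a proof of it; the verification genuinely requires combining the constancy relations \eqref{sameweight}, the case split in the proof of Proposition~\ref{t2k} (when $\ell=0$ the three bounds reduce to \eqref{sst2}, \eqref{LR1}, and $s_{1,3}\le\lambda_2(h_1)$, which does follow from \eqref{sst1} and \eqref{sst2}; when $\ell>0$ they do not), and the hypotheses \eqref{limitlambda} on $\blambda$ (which enter as $\lambda_2(h_1)-\lambda_1(h_1)\ge a$ and $\lambda_1(h_1)+\lambda_1(h_2)\ge\lambda_2(h_1)+\lambda_2(h_2)$, as in \eqref{bounds}). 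A second, smaller omission: nonnegativity of $s'_{1,2}$ and $s'_{2,3}$ requires $\ell \le a$, which Proposition~\ref{t2k} as stated does not assert ($\ell \ge 0$ only); one must observe that an $\ell$ satisfying its conclusions can always be replaced by $\min(\ell,a)$. Until the three $-\ell$--shifted inequalities are actually established, the corollary is not proved.
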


\bibliographystyle{alpha}
\bibliography{cfs-schur-biblist}

\end{document}